	\theoremstyle{definition}
		\newtheorem{dfn}{Definition}[section]
		\theoremstyle{plain}
		\newtheorem{thm}[dfn]{Theorem}
		\newtheorem{lem}[dfn]{Lemma}
		\newtheorem{prop}[dfn]{Proposition}
\newcommand{\rep}{{\rm rep}}
\newcommand{\tr}{{\mathrm{tr}}}
\newcommand{\ol}{\overline}
\title[A gap of the exponents of repetitions of Sturmian words]
	{A gap of the exponents of repetitions \\ of Sturmian words}
\author[S. Ohnaka]{Suzue Ohnaka}
\author[T. Watanabe]{Takao Watanabe}
\thanks{The second author was supported in part by JSPS KAKENHI Grant Number 17K05170}
\address[S. Ohnaka]
{Naruo Senior High School,  Gakubundencho 2-1-60
 Nishinomiya, Hyougo, 663-8182, Japan}
 \email{u330722e@alumni.osaka-u.ac.jp}
\address[T. Watanabe]
{Department of Mathematics, Graduate School of Science, Osaka University, 
	Toyonaka, Osaka, 560-0043, Japan}
\email{twatanabe@math.sci.osaka-u.ac.jp}
\keywords{Combinatorics on words, Sturmian word, irrationality exponent}
\subjclass[2010]{Primary 68R15; Secondary 11A55, 11A63}
\begin{document}

\maketitle

\begin{abstract}
By measuring second occurring times of factors of an infinite word $x$, 
Bugeaud and Kim introduced a new quantity $\rep(x)$ called the exponent of repetition of $x$. 
It was proved by Bugeaud and Kim that $1 \leq \rep(x) \leq r_{\max} = \sqrt{10} - 3/2$ if  $x$ is a Sturmian words.  
In this paper, we determine the value $r_1$ such that there is no Sturmian word  $x$ satisfying 
$r_1 < \rep(x) < r_{\max}$ and $r_1$ is an accumulation point of the set of $\rep(x)$ 
when $x$ runs over the Sturmian words. 
\end{abstract}

\section*{Introduction}

A finite or infinite sequence $x =x_1x_2 \cdots$ of elements of a finite set $\mathbf{A}$ is called a word over $\mathbf{A}$. A subword of consecutive letters occurring in $x$ is called a factor of $x$. For an infinite word $x$ and a positive integer $n$,  we denote by $p(n,x)$ the number of different factors  of length $n$ occurring in $x$. It is known by Morse and Hedlund \cite[\S 7]{morse} or Coven and Hedlund \cite[Theorem 2.06]{coven} that the subword complexity function $n \mapsto p(n,x)$  characterizes  the eventually periodic words. Namely, the following statements for an infinite word $x$ are equivalent:
\begin{itemize}
\item $x$ is  eventually periodic;
\item $p(n,x) \le n$ for some $n$;
\item $\{p(n,x)\}_{n \ge 1}$ is bounded;
\end{itemize} 
(see, e.g., \cite[Theorem 10.2.6]{allouche}).  
This implies that any non-eventually periodic infinite word $x$ satisfies $p(n,x) \ge n+1$ for all positive integers $n$. An infinite word $x$ satisfying $p(n,x) = n+1$ for all $n$ is called a Sturmian word. Sturmian words appear in many different areas and have been studied from different viewpoints. (See, e.g., \cite[Notes on Chapter 10]{allouche}, \cite[Notes on Chapter 2]{lothaire}).  
 
Recently, Bugeaud and Kim introduced a new complexity function $r(n,x)$  in \cite{bugeaud}. For  an infinite word $x$, we denote the factor $x_i x_{i+1} \cdots x_{i+j}$  of $x$ by  $x_{i}^{i+j}$. Then $r(n,x)$  is defined by 
 \begin{gather*}
	r(n, x) = \min\{ m \geq 1 \; \mid \; x_{i}^{i+n-1} = x_{m-n+1}^{m} \;\; 
		\text{for some $i$ with $1 \leq i \leq m-n$}\}.
\end{gather*}
In other words, $r(n, x)$ denotes the length of the smallest prefix of $x$ 
containing two occurrences of some factor of length $n$. 
For example, if $x$  is a binary infinite word over $\{0,1\}$ with a prefix $x_1 \cdots x_8 = 01001001$, then we have $r(2,x) = 5$ since the factor $01$ occurs twice in the prefix of length $5$. Similarly, we have $r(3,x) = 6$ by the factor $010$ and $r(4,x) = 7$ by the factor $0100$. We note that the factor $0100$ occurs twice as $x_1x_2x_3x_4$ and $x_4x_5x_6x_7$ in the prefix  of length $7$ by admitting an overlap of $x_4$. The following properties of $r(n,x)$ were proved by Bugeaud and Kim.

\begin{thm}[{\cite[Lemma 2.1, Theorems 2.3 and 2.4]{bugeaud}}] \label{bugthm:2.3}
Let $x$ be an infinite word. 

$(1)$  $n + 1 \le r(n,x) \le p(n,x) + n$ and $r(n,x) \le r(n+1,x) - 1$ hold for all $n \ge 1$. 

$(2)$  $x$ is eventually periodic if  and only if $r(n,x) \le 2n$ holds for all sufficiently large $n$. 

$(3)$  $x$ is a Sturmian word if and only if $x$ satisfies $r(n,x) \le 2n+1$ for  $n \ge 1$ and  $r(n,x) = 2n+1$ holds for infinitely many $n$.    
\end{thm}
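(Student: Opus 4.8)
The plan is to prove the three assertions in the order stated, since each is used in proving the next. For part (1), the lower bound $r(n,x)\ge n+1$ is forced by the definition: two occurrences of a length-$n$ factor inside a prefix $x_1^{m}$ put the second one at positions $m-n+1,\dots,m$ and the first one starting at some $i\le m-n$, so $m\ge n+1$. The upper bound $r(n,x)\le p(n,x)+n$ is pigeonhole: among the $p(n,x)+1$ factors $x_1^{n},x_2^{n+1},\dots,x_{p(n,x)+1}^{p(n,x)+n}$ two must coincide, and the later one ends at position at most $p(n,x)+n$. The inequality $r(n,x)\le r(n+1,x)-1$ is truncation: two occurrences of a length-$(n+1)$ factor in $x_1^{r(n+1,x)}$ induce two occurrences of its length-$n$ prefix in $x_1^{r(n+1,x)-1}$. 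I would record the corollary that $n\mapsto r(n,x)-n$ is non-decreasing, which is used below.

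For part (2), if $x$ is eventually periodic with pre-period $s$ and period $t$, then for every $n\ge s+t$ the factor $x_{s+1}^{s+n}$ reoccurs shifted by $t$, so $r(n,x)\le s+n+t\le 2n$. The converse is the substantial direction, and I would model it on the hard half of the Morse--Hedlund theorem. Assuming $r(n,x)\le 2n$ for all $n\ge N$, each such inequality produces two occurrences $i<j$ of some length-$n$ factor with $j\le n+1$, hence an integer $p_n=j-i$ with $1\le p_n\le n$ such that $x$ has period $p_n$ on a block of length $n+p_n$ contained in $x_1^{2n}$. One then shows that these local periods cannot keep changing: combining the monotonicity of $r(n,x)-n$ with the Fine--Wilf theorem, applied to the large overlaps of the blocks attached to nearby values of $n$, forces the $p_n$ to be mutually compatible and eventually constant, equal to a period $p$ of $x_k$ for all large $k$; hence $x$ is eventually periodic.

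For part (3), if $x$ is Sturmian then $p(n,x)=n+1$, so part (1) gives $r(n,x)\le 2n+1$ for all $n$; moreover $r(n,x)=2n+1$ holds for infinitely many $n$, since otherwise $r(n,x)\le 2n$ eventually and part (2) would make $x$ eventually periodic, contradicting aperiodicity of Sturmian words. Conversely, assume $r(n,x)\le 2n+1$ for all $n$, with equality for infinitely many $n$. Then $x$ is not eventually periodic by part (2), so $p(n,x)\ge n+1$ for all $n$, and it suffices to prove $p(n,x)\le n+1$. This I would obtain from a sharpening of the block analysis of part (2): the constraint $r(n,x)\le 2n+1$ forces the first occurrence of each length-$n$ factor to take place essentially as early as possible, which pins the number of right-special factors (those extendable to the right by two distinct letters) of every length at one, so $p(n,x)=n+1$ and $x$ is Sturmian.

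The step I expect to be the real obstacle is the converse of part (2): turning the family of local periods $p_n$, each living on its own block inside the prefix, into one eventual period of $x$. The delicate point is to control the growth of $r(n,x)-n$, so that the blocks attached to consecutive (or suitably chosen) values of $n$ overlap in more than $p_n+p_{n+1}-\gcd(p_n,p_{n+1})$ positions and Fine--Wilf becomes applicable; excluding an unbounded sequence of periods that remains barely admissible at each step is what makes the argument genuinely delicate. The converse of part (3) then needs this same machinery pushed one step further, now to extract the exact complexity $n+1$ rather than merely aperiodicity.
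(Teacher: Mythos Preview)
This theorem is stated in the paper only as a citation of results from Bugeaud and Kim \cite[Lemma 2.1, Theorems 2.3 and 2.4]{bugeaud}; the present paper gives no proof of its own, so there is nothing here to compare your approach against. Your sketch of part (1) is correct and is essentially the standard argument (and indeed the one in \cite{bugeaud}). For parts (2) and (3) you have correctly identified where the real work lies --- the converse implications --- and your outline (local periods plus Fine--Wilf for (2), control of right-special factors for (3)) points in the right direction, but as you yourself note, these are sketches rather than proofs; the actual arguments in \cite{bugeaud} require several pages and some care with the bookkeeping of where the repeated blocks sit. If you want to complete this, you should consult \cite{bugeaud} directly rather than the present paper.
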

 
 By Theorem \ref{bugthm:2.3} (1), every infinite word $x$ satisfies 
\begin{gather*}
 1 + \frac{1}{n} \le \frac{r(n,x)}{n} \le 1 + \frac{p(n,x)}{n}  
\end{gather*}
 for all $n \ge 1$. If $x$ is eventually periodic, then $p(n,x)/n$ converges to $0$ as $n \to \infty$ since $\{p(n,x)\}_{n \ge 1}$ is bounded. Therefore, $r(n,x)/n$ converges to $1$ as $n \to \infty$ in this case.  In general, $r(n,x)/n$ does not necessarily converge as $n \to \infty$ (see Figure 1 of \S 1).  Bugeaud and Kim defined the exponent $\mathrm{rep}(x)$ of repetitions  of $x$ as 
 \begin{gather*}
 \mathrm{rep}(x) = \liminf_{n \to \infty} \frac{r(n,x)}{n}  .
 \end{gather*}
 A significant meaning of $\rep(x)$ is the following connection to Diophantine approximations.  For an irrational real number $\xi$, the irrationality exponent $\mu(\xi)$ of $\xi$ is defined to be the supremum of the real numbers $\mu$ such that the inequality
\begin{gather*}
\left | \xi - \frac{p}{q} \right | < \frac{1}{q^\mu} 
\end{gather*} 
 has infinitely many solutions in rational numbers $p/q$.  Famous Roth's theorem says that $\xi$ is transcendental if $\mu(\xi) > 2$. An infinite word $x = x_1x_2 \cdots $ over $\{0, 1, \cdots, b-1\}$ for an integer $b \ge 2$ defines the real number 
\begin{gather*}
 \xi_{x,b} = \sum_{k=1}^\infty \frac{x_k}{b^k}, 
\end{gather*}
 which is irrational if $x$ is not eventually periodic. Then Bugeaud and Kim proved the following:
 
\begin{thm}[{\cite[Theorems 4.2 and 4.5]{bugeaud}}] \label{bugthm:4.2}
Let $x$ be an infinite word over $\{0,1, \cdots, b-1\}$ which is not eventually periodic. Then $\xi_{x,b}$ satisfies 
\begin{gather}
\mu(\xi_{x,b}) \ge \dfrac{\mathrm{rep}(x)}{\mathrm{rep}(x) - 1} \,, \label{eq:irrationality}
\end{gather}
where the right hand side is infinite if $\mathrm{rep}(x) = 1$. Moreover, if $x$ is a Sturmian word, then the equality of (\ref{eq:irrationality}) holds. 
\end{thm}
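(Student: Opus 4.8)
The plan is to prove the two inequalities separately: the general lower bound $\mu(\xi_{x,b})\ge \rep(x)/(\rep(x)-1)$, obtained by converting repetitions of $x$ into rational approximations of $\xi_{x,b}$, and --- under the Sturmian hypothesis --- the matching upper bound, obtained by running that implication in reverse while exploiting the rigid repetition structure of Sturmian words.

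For the lower bound, fix a large $n$ and unfold the definition of $r(n,x)$. Put $m=r(n,x)$; by definition there is an index $i$ with $1\le i\le m-n$ and $x_i^{i+n-1}=x_{m-n+1}^{m}$, and setting $s=m-n+1-i\ge 1$ this means precisely that $x_j=x_{j+s}$ for all $j$ with $i\le j\le i+n-1$. Hence the prefix $x_1\cdots x_m$ is periodic of period $s$ on the block of positions $i,i+1,\dots,m$, which has length $n+s$ and satisfies $(i-1)+s=m-n$. Let $\eta_n$ be the rational number with base-$b$ expansion $x_1\cdots x_{i-1}\,\overline{x_i\cdots x_{i+s-1}}$; its denominator $q_n$ divides $b^{i-1}(b^s-1)$, so $q_n<b^{(i-1)+s}=b^{m-n}$, while $\xi_{x,b}$ and $\eta_n$ agree in their first $m$ base-$b$ digits, so $|\xi_{x,b}-\eta_n|\le b^{-m}=b^{-r(n,x)}$. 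Raising $q_n<b^{r(n,x)-n}$ to the power $r(n,x)/(r(n,x)-n)$ gives
\[
\bigl|\xi_{x,b}-\eta_n\bigr|\;\le\;b^{-r(n,x)}\;\le\;q_n^{-r(n,x)/(r(n,x)-n)}.
\]

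To conclude the lower bound, pass to a subsequence $(n_k)$ along which $r(n_k,x)/n_k\to\rep(x)$. Since $t\mapsto t/(t-1)$ is continuous and strictly decreasing on $(1,\infty)$, the exponents $r(n_k,x)/(r(n_k,x)-n_k)=\bigl(r(n_k,x)/n_k\bigr)/\bigl(r(n_k,x)/n_k-1\bigr)$ tend to $\rep(x)/(\rep(x)-1)$, read as $+\infty$ when $\rep(x)=1$. As $|\xi_{x,b}-\eta_{n_k}|\le b^{-(n_k+1)}\to 0$ and $\xi_{x,b}$ is irrational (because $x$ is not eventually periodic), the numbers $\eta_{n_k}$ are infinitely many distinct rationals and their denominators are unbounded; hence for every $\mu<\rep(x)/(\rep(x)-1)$ the inequality $|\xi_{x,b}-p/q|<q^{-\mu}$ has infinitely many solutions, which is exactly $\mu(\xi_{x,b})\ge\rep(x)/(\rep(x)-1)$.

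There remains the inequality $\mu(\xi_{x,b})\le\rep(x)/(\rep(x)-1)$ for Sturmian $x$, and this is where the difficulty lies. Fix $\mu$ slightly below $\mu(\xi_{x,b})$ and a sequence of reduced rationals $p/q$, $q\to\infty$, with $|\xi_{x,b}-p/q|<q^{-\mu}$. Each $p/q$ has an eventually periodic base-$b$ expansion with preperiod $h$ and period $k$, with $q\mid b^{h}(b^{k}-1)$, hence $h+k>\log_b q$; and the approximation forces $x$ to be periodic of period $k$ on the block of positions from $h+1$ to roughly $\mu\log_b q$, which exhibits a repetition of $x$ and yields $r(\ell,x)/\ell\lesssim(\mu\log_b q)/(\mu\log_b q-h-k)$ for a suitable index $\ell\to\infty$. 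Were one to know that $h+k=(1+o(1))\log_b q$ along this sequence, this would give $\rep(x)\le\mu/(\mu-1)$, equivalently $\mu\le\rep(x)/(\rep(x)-1)$, and letting $\mu\uparrow\mu(\xi_{x,b})$ would finish the proof. The main obstacle is precisely to establish $h=o(\log_b q)$ and $k=(1+o(1))\log_b q$: a priori $h+k$ can be a large multiple of $\log_b q$, and only the control coming from the Sturmian structure saves this. Concretely one should use that the repetitions of a Sturmian word are governed by its standard words $s_k$ --- built recursively from the partial quotients $a_1,a_2,\dots$ of the slope through relations $s_k=s_{k-1}^{a_k}s_{k-2}$, with $|s_k|$ the $k$-th continued-fraction denominator $q_k$ --- whose repeated blocks sit at the very start of $x$, together with the sharp estimate $r(n,x)\le 2n+1$ of Theorem \ref{bugthm:2.3}(3); these together should show that any approximant good enough to be relevant arises, up to a negligible shift, from one of those initial repetitions, i.e.\ is essentially one of the $\eta_n$ of the first part. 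Carrying out this matching is the crux; once it is in place, the two inequalities combine to the stated equality.
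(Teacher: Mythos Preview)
This theorem is not proved in the present paper at all; it is quoted verbatim as Theorems~4.2 and~4.5 of Bugeaud--Kim \cite{bugeaud}, so there is no in-house argument to compare your proposal against. Your lower bound is the standard construction and is correct: a repetition witnessing $r(n,x)=m$ yields a rational of denominator dividing $b^{i-1}(b^s-1)<b^{m-n}$ that agrees with $\xi_{x,b}$ to $m$ base-$b$ digits, and along a subsequence with $r(n,x)/n\to\rep(x)$ one reads off $\mu(\xi_{x,b})\ge\rep(x)/(\rep(x)-1)$.

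The upper bound, as you yourself acknowledge, is not established --- and the route you sketch faces a genuine obstacle. Starting from an arbitrary good approximant $p/q$ and extracting its base-$b$ preperiod $h$ and period $k$ is not enough: for a generic reduced $p/q$ the minimal pair $(h,k)$ with $q\mid b^h(b^k-1)$ can be enormously larger than $\log_b q$ (the period $k$ is the multiplicative order of $b$ modulo the $b$-coprime part of $q$), so the bound $r(\ell,x)/\ell\lesssim N/(N-h-k)$ is vacuous without the missing input $h+k\sim\log_b q$. Bugeaud and Kim do not try to reverse-engineer arbitrary approximants. They compute each side separately in terms of the continued-fraction data of the slope: the block recursion $M_{k+1}=M_k^{a_{k+1}}M_{k-1}$ and the prefix decomposition of Lemma~\ref{buglem:7.2} pin down $r(n,x)$ at the critical indices $n$, giving an explicit $\liminf$ formula for $\rep(x)$; independently, the irrationality exponent of a Sturmian number is obtained from the continued-fraction expansion of $\xi_{x,b}$ itself, whose convergents are built from the words $M_k$. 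Equality then follows by matching the two closed-form expressions. To salvage your approach you would have to show that every sufficiently good rational approximation to $\xi_{x,b}$ is already a continued-fraction convergent of $\xi_{x,b}$, and that those convergents are precisely the $\eta_n$ of your first paragraph --- which amounts to redoing the Bugeaud--Kim computation in a different order.
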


See \cite[\S 4]{bugeaud} for more information about previous researches on relations between the combinatorial properties of words and the $b$-ary expansion of an irrational number.
   
It will be natural to ask  what possible values of $\mathrm{rep}(x)$ are. We denote by $\textbf{St}$ the set of all Sturmian words over $\mathbf{A}$ and by $\mathrm{rep}(\mathbf{St})$ the subset  $\{ \mathrm{rep}(x) \;| \; x \in \mathbf{St} \}$ of the real numbers.  In the same paper \cite{bugeaud}, both the maximum and the minimum values of $\mathrm{rep}(\mathbf{St})$ were determined.

\begin{thm}[{\cite[Theorems 3.3 and 3.4]{bugeaud}}] \label{bugthm:3.3}
Every Sturmian word $x$ satisfies 
\begin{gather*}
	1 \leq \rep(x) \leq r_{\max} := \sqrt{10} - \frac{3}{2} = 1.66227\cdots .
\end{gather*}
$r_{\max}$ is the maximum and $1$ is the minimum of $\mathrm{rep}(\mathbf{St})$. If $x$ satisfies $\rep(x) = r_{\max}$, then the continued fraction expansion of the slope of $x$ is 
of the form $[0,a_1, a_2, \cdots, a_K, \overline{2,1,1}]$ for some $K$.
\end{thm}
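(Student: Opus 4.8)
The plan is to turn the computation of $\rep(x)$ into an optimization problem over the continued fraction expansion of the slope, and then solve that problem. Fix a Sturmian word $x$ with slope $\alpha = [0; a_1, a_2, \dots]$, let $p_k/q_k$ be the convergents, so $q_{k+1} = a_{k+1} q_k + q_{k-1}$, and recall the standard words $s_k$ with $|s_k| = q_k$ and $s_{k+1} = s_k^{a_{k+1}} s_{k-1}$, of which the characteristic word $c_\alpha$ is the limit. The central step is to describe, for $n$ in the range $q_{k-1} \le n < q_k$, exactly where a factor of length $n$ first reoccurs in $x$, and hence to obtain sharp two-sided bounds on $r(n,x)$ in terms of $n$, the $q_j$'s and the partial quotients $a_k, a_{k+1}$. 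The upper bounds (which only require exhibiting repetitions) come from the quasi-periodic occurrences of standard prefixes: the prefix of $c_\alpha$ of length $q_{k+1}$ equals $s_k^{a_{k+1}} s_{k-1}$, so inside the power $s_k^{a_{k+1}}$ every factor of length up to $(a_{k+1}-1)q_k$ reoccurs with period $q_k$, the cases $a_{k+1}=1$ and $a_{k+1}\ge 2$ being treated separately. The matching lower bounds (ruling out earlier reoccurrences) require the finer structure of the factor set of $c_\alpha$, e.g.\ return words to standard prefixes and the singular words of $c_\alpha$. For a non-characteristic Sturmian word one feeds in the Ostrowski expansion of the intercept to locate the first block of each type, obtaining bounds for $\rep(x)$ that again depend only on the tail $(a_k)_{k\ge N}$, the supremum over all intercepts being attained.

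From these estimates one reads off that the $\liminf$ defining $\rep(x)$ is approached along $n$ near a fixed small combination of $q_k$ and $q_{k-1}$, so that
\begin{gather*}
\rep(x) \;=\; \liminf_{k \to \infty} G\bigl( a_k,\, a_{k+1},\, q_{k-1}/q_k,\, [0; a_{k+2}, a_{k+3}, \dots] \bigr)
\end{gather*}
for an explicit function $G$ of a bounded window of consecutive partial quotients together with one tail continued fraction. In particular $\rep(x)$ depends only on the tail of the continued fraction of the slope, which already explains why an arbitrary initial block $a_1, \dots, a_K$ appears in the extremal case. The bound $\rep(x) \ge 1$ is Theorem \ref{bugthm:2.3}(1), and the value $1$ is reached by letting $a_k \to \infty$: a factor of length $(a_{k+1}-1)q_k$ then reoccurs by length $a_{k+1}q_k$, so $r(n,x)/n \to 1$ along that sequence, and $1$ is the minimum of $\rep(\mathbf{St})$.

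The remaining work is to maximize $\liminf_k G(\cdots)$ over all admissible sequences of partial quotients. Since $G$ is a minimum of several competing quantities — one per candidate reoccurrence position of a long factor — raising a partial quotient shifts weight among these terms rather than helping all of them, so the optimum is attained at a balanced, eventually periodic tail; note that the all-large regime gives the \emph{minimum} value $1$ and the all-ones (Fibonacci) tail is already suboptimal, so a genuine mixture is forced. I would then check that the period $\overline{2,1,1}$ strictly dominates every other eventual period (and every aperiodic tail), that its value is exactly $r_{\max} = \sqrt{10} - \tfrac{3}{2}$ — consistent with the slope $[0; \overline{2,1,1}] = (\sqrt{10}-2)/3$ — and that it is genuinely achieved, e.g.\ by a suitable Sturmian word of slope $[0; \overline{2,1,1}]$, so that $r_{\max}$ is a maximum and not merely a supremum. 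Conversely, $\rep(x) = r_{\max}$ forces $G$ to attain this value along a full subsequence, which pins the tail of the continued fraction down to $\overline{2,1,1}$ and yields the stated form $[0, a_1, \dots, a_K, \overline{2,1,1}]$.

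The main obstacle is the first step: establishing the sharp formula for $r(n,x)$. Order-of-magnitude bounds are routine, but the optimization can only produce the clean constant $\sqrt{10} - 3/2$ if the estimates are exact, so one must carefully track overlapping occurrences, the dichotomy $a_{k+1}=1$ versus $a_{k+1}\ge 2$, the corrections coming from singular words, and — for non-characteristic words — the dependence on the intercept through its Ostrowski expansion. Once that formula is in place, the optimization, while somewhat case-heavy in ruling out all competing periodic patterns, is an elementary computation with continued fractions.
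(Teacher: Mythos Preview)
This paper does not prove Theorem~\ref{bugthm:3.3}; it is quoted from Bugeaud--Kim and used as input. So there is no in-paper proof to compare against directly. What the paper does import from \cite{bugeaud} are the structural Lemmas~\ref{buglem:7.2}--\ref{buglem:7.6}, and those make the shape of the original argument visible: one classifies the prefix of $x$ at scale $k$ into the three cases $[1]_k,[2]_k,[3]_k$ via the word $W_k$, shows that cases $[1]_k$ and $[3]_k$ already force $r(n,x)/n$ below $\varphi+o(1)$ for some $n$ near $q_k$, that $a_k\ge 3$ in case $[2]_k$ forces it below $(\sqrt{17}+9)/8+o(1)$, and then in the surviving regime (case $[2]_k$ for all large $k$, $a_k\in\{1,2\}$) uses the two explicit inequalities of Lemma~\ref{buglem:7.6} with the parameters $t_k=|W_k|/q_k$ and $\eta_k=q_{k-1}/q_k$ to pin down the $\liminf$. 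The intercept enters only through $|W_k|$, not through an Ostrowski expansion as such.

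Your plan points in the right direction --- reduce $\rep(x)$ to the tail of the continued fraction and optimize --- but what you have written is a sketch, not a proof, and the two hardest steps are exactly the ones you wave past. First, you never produce the promised sharp formula for $r(n,x)$; ``the upper bounds come from quasi-periodic occurrences and the lower bounds require the finer structure'' is a description of the difficulty, not a resolution of it. The Bugeaud--Kim machinery handles this via the trichotomy $[1]_k/[2]_k/[3]_k$ and the single prefix parameter $|W_k|$, which is more efficient than tracking the full Ostrowski expansion of the intercept. Second, the sentence ``I would then check that the period $\overline{2,1,1}$ strictly dominates every other eventual period (and every aperiodic tail)'' is the entire content of the upper bound $\rep(x)\le r_{\max}$ and of the rigidity statement, and you give no mechanism for carrying it out. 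In the actual proof this is not a single check but a cascade of eliminations (large $a_k$ out by Lemma~\ref{buglem:7.5}; Fibonacci tail out; blocks $22$ out; then a balance between the two competing bounds of Lemma~\ref{buglem:7.6}), and the constant $\sqrt{10}-3/2$ only emerges once those cases are worked through. As written, your proposal identifies the strategy but does not supply any of the arguments.
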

 
As noticed in  \cite[below Proof of Theorem 3.4]{bugeaud}, 
$r_{\max}$ is an isolated point of $\rep(\mathbf{St})$. On the other hand, the minimum $1$ is an accumulation point of $\rep(\mathbf{St})$ (\cite[Proof of Theorem 3.3]{bugeaud}). Except for these results, little is known about  the set $\mathrm{rep}(\mathbf{St})$. In this paper, we find a gap in  $\rep(\mathbf{St})$ and determine the largest accumulation point. Main results of this paper are the following two theorems.  

\begin{thm}
	Let 
	\begin{gather*}
		r_1 := \frac{48 + \sqrt{10}}{31} = 1.65039 \cdots . 
	\end{gather*}
	Then there is no Sturmian word $x$ satisfying
	\begin{gather*}
		r_1 < \rep(x) < r_{\max} . 
	\end{gather*}
	The value $r_1$ is an accumulation point of  $\rep(\mathbf{St})$. 
	\end{thm}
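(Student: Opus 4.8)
The plan is to recast everything in terms of the continued fraction expansion of the slope of $x$. Write this slope as $\theta=[0;1+a_1,a_2,a_3,\dots]$ with $a_i\ge1$ for $i\ge2$, and let $(q_k)_{k\ge0}$ be the denominators of its convergents, so that $q_{k-1}/q_k=[0;a_k,a_{k-1},\dots,a_2,1+a_1]$. The first ingredient is a sufficiently sharp description of $n\mapsto r(n,x)$: it increases by exactly $1$ at each step except at a bounded number of breakpoints per block $q_k\le n<q_{k+1}$, these breakpoints being located through the Ostrowski (Stern--Brocot) structure, so that the local minima of $n\mapsto r(n,x)/n$ sit at an explicit sequence of scales and one obtains a formula of the shape
\[
  \rep(x)=\liminf_{k\to\infty}\,F\bigl(a_{k-c},\dots,a_{k+c};\,q_{k-1}/q_k\bigr)
\]
for an explicit function $F$ and a fixed window size $c$ (this refines the analysis of \cite{bugeaud} and would be carried out in the earlier sections). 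In particular $\rep(x)$ depends only on the tail of $(a_k)$, and the theorem becomes an optimization problem over sequences of positive integers; in what follows we freely replace $x$ by the characteristic Sturmian word of the same slope.

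For the gap I would argue as follows. By Theorem~\ref{bugthm:3.3}, $\rep(x)=r_{\max}$ forces the tail of $(a_k)$ to be $\overline{2,1,1}$ (in some phase), and conversely any slope with that tail has $\rep(x)=r_{\max}$ since $\rep$ depends only on the tail. Hence it suffices to prove that if $(a_k)$ is \emph{not} eventually $\overline{2,1,1}$, then $\rep(x)\le r_1$. Suppose not, so $\rep(x)>r_1$; then $F\bigl(a_{k-c},\dots,a_{k+c};q_{k-1}/q_k\bigr)>r_1$ for all large $k$. First, a crude estimate forces $a_k\le2$ for all large $k$: if $a_{k+1}\ge3$ then the length-$2q_k$ prefix of the word reoccurs $q_k$ steps later, so $r(2q_k,x)\le3q_k$ and $r(2q_k,x)/(2q_k)\le3/2<r_1$. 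A finite case analysis on the admissible windows of consecutive $1$'s and $2$'s then shows that every such window must be a factor of $\overline{2,1,1}$; since $\overline{2,1,1}$ is uniquely continued by each of its factors, the tail of $(a_k)$ is forced to be $\overline{2,1,1}$, contradicting the hypothesis. Therefore every Sturmian word whose slope is not eventually $\overline{2,1,1}$ satisfies $\rep(x)\le r_1$, so no value of $\rep$ lies in the open interval $(r_1,r_{\max})$.

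For the accumulation statement I would exhibit an explicit family. Let $x^{(m)}$ be the characteristic Sturmian word whose slope has purely periodic continued fraction with period $(2,1,1)^m D$, where $D$ is a fixed short block, chosen — guided by the case analysis above — to be the least costly deviation from $\overline{2,1,1}$. For $m$ large the $(2,1,1)$-runs contribute values of $F$ whose minimum tends to $r_{\max}>r_1$, so the $\liminf$ defining $\rep(x^{(m)})$ is attained in the neighbourhood of the copies of $D$; and as $m\to\infty$ the ratios $q_{k-1}/q_k$ near $D$ converge to their values in the bi-infinite $\overline{2,1,1}$ background, governed by the dominant eigenvalue $3+\sqrt{10}$ of the substitution matrix of the period $(2,1,1)$ and by the denominator $31$ appearing at the relevant scale. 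Feeding these limits into $F$ yields $\rep(x^{(m)})\to(48+\sqrt{10})/31=r_1$ with $\rep(x^{(m)})<r_1$ for every $m$; in particular $r_1$ is an accumulation point of $\rep(\mathbf{St})$.

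The hard part will be the rigidity step in the second paragraph. It requires a description of $r(n,x)$ precise enough that $F$ and the positions of the local minima are completely explicit — in particular one must track the qualitatively different behaviour when $a_{k+1}=1$ versus $a_{k+1}\ge2$ — together with a careful but finite case analysis showing that forbidding every local minimum below $r_1$ pins the period down to $(2,1,1)$. Making the threshold come out as exactly $(48+\sqrt{10})/31$, rather than some nearby constant, is where the bookkeeping must be done with care; by contrast, once the optimal deviation block $D$ has been identified, verifying the accumulation is a routine, if somewhat lengthy, computation.
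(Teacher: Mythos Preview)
Your overall strategy---reduce to the continued fraction of the slope, show the tail must be $\overline{2,1,1}$, and build an explicit family approaching $r_1$ from below---matches the paper's. But two of your structural claims are not merely ``bookkeeping''; they are where the actual content lies, and as stated they are wrong.

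\medskip
\textbf{The fixed-window formula does not exist.} You assert a representation
\[
\rep(x)=\liminf_{k\to\infty} F(a_{k-c},\dots,a_{k+c};\,q_{k-1}/q_k)
\]
with a \emph{fixed} window $c$, and deduce that $\rep(x)$ depends only on the tail of $(a_k)$ so that one may ``freely replace $x$ by the characteristic Sturmian word''. Neither claim is justified. The paper's analysis (following Bugeaud--Kim) tracks, in addition to $\eta_k=q_{k-1}/q_k$, the quantity $t_k=|W_k|/q_k$, where $W_k$ is a prefix of $x$ depending on the \emph{intercept}; the relevant local minima of $r(n,x)/n$ are controlled by $1+\frac{1+\eta_k}{t_k+1+\eta_k}$ and $1+\frac{t_k+\eta_k}{1+\eta_k}$. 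One first uses the hypothesis $\rep(x)>\varphi$ to force the structural case $[2]_k$ for all large $k$, and only \emph{then} does $t_k$ obey the recursion $t_k=\eta_k(t_{k-1}+\eta_{k-1})$, which still carries the full history of the partial quotients, not a bounded window. The dependence on intercept is genuinely not understood in general (the paper says so explicitly), so your replacement step needs this detour through case~$[2]_k$ and $t_k$; it is not free.

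\medskip
\textbf{The case analysis is not finite.} After one has excluded $a_k\ge3$, the block $22$, and a handful of short patterns, the remaining obstruction is the block $2111$. Your plan says a finite check on windows of length $2c+1$ rules this out. It does not: once $2111$ occurs, the \emph{only} surviving continuations (given the earlier exclusions) are $2111(211)^n1$ with $n$ arbitrary, and each such pattern must be excluded separately. The paper does this by solving the linear recursion for $(q_{k+3\ell},q_{k+3\ell-1})$ explicitly in terms of the eigenvalues $3\pm\sqrt{10}$, feeding the resulting closed forms into the $t_k$-bound, and showing the limit as $n\to\infty$ of the relevant upper bound is exactly $r_1$ (this is precisely where the constant $(48+\sqrt{10})/31$ appears). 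A bounded-window argument cannot produce this limit.

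\medskip
\textbf{The accumulation step has a hidden algebraic obstruction.} Your family with period $(2,1,1)^m D$ is the right idea, and the paper takes $D=(2,1,1,1)$. But the assertion ``$\rep(x^{(m)})<r_1$ for every $m$'' is not the routine computation you suggest: a priori the limit could be attained. The paper shows strict inequality by proving that equality would force a certain quadratic irrational $e_0^{(n)}(0)=[0,\overline{1,(1,1,2)^{n+1}}]$ to lie in $\mathbb{Q}(\sqrt{10})$, and then excludes this via congruences on the discriminant modulo $2$ and $5$. Without this, you have shown $r_1$ is a limit point of $\rep(\mathbf{St})\cup\{r_1\}$, not of $\rep(\mathbf{St})$.
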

 
 \begin{thm}
	Let 
	\begin{gather*}
		r_2 := \frac{415\sqrt{149} - 2693}{1438} = 1.65001\cdots .
	\end{gather*}
	If $x \in \mathbf{St}$ satisfies $r_2 \leq \rep(x) \leq r_1$, 
	then the continued fraction expansion of the slope of $x$ is of the form 
	\begin{gather*}
		[0,a_1,a_2, \cdots, a_K, (2,1,1)^{n_1}, 1, (2,1,1)^{n_2}, 1, \cdots ]
	\end{gather*}
	for some $K$ and some sequence $n_1, n_2, \cdots $ of integers greater than $1$, 
	where $(2,1,1)^{n_i}$ denotes the periodic sequence repeating $2,1,1$ $n_i$ times . 
\end{thm}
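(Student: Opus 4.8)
The plan is to argue by contraposition, using the explicit description of $\rep(x)$ for Sturmian words in terms of the continued fraction of the slope that also underlies Theorem~\ref{bugthm:3.3} and the previous theorem. Write the slope of $x$ as $\alpha=[0;a_1,a_2,\dots]$ with convergent denominators $q_k$. The first step is to reduce, as Bugeaud and Kim do, the computation of $\rep(x)=\liminf_{n\to\infty}r(n,x)/n$ to a $\liminf$ over the indices $k$ of a quantity $\rho_k$ that depends only on a bounded window $\dots,a_{k-1},a_k,a_{k+1},\dots$ of partial quotients together with the two one-sided tails $[0;a_{k+1},a_{k+2},\dots]$ and $[0;a_k,a_{k-1},\dots,a_1]$. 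The structural feature that makes the argument work is that $\rho_k$ is a fractional-linear function of each of these tail values, hence monotone in it, so that $\rho_k$ is squeezed between the values obtained by completing the tails with $\overline{1}$ and with $\overline{2,1,1}$. With this reduction in place, $r_2$ is pinned down at the outset: a direct computation shows that the extremal block word $[0,\overline{(2,1,1)^2,1}]=[0,\overline{2,1,1,2,1,1,1}]$, whose purely periodic continued fraction is a root of $19y^2-37y-31=0$ (so that $\sqrt{3725}=5\sqrt{149}$ enters), satisfies $\rep=r_2=(415\sqrt{149}-2693)/1438$, and that $r_2$ is the smallest value of $\rep$ attained on any block word.

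Now assume $x\in\mathbf{St}$ with $r_2\le\rep(x)\le r_1$. Since a slope eventually periodic of period $2,1,1$ yields $\rep=r_{\max}>r_1$ (which, granting that $\rep$ depends only on the tail of the continued fraction, follows from Theorem~\ref{bugthm:3.3}), the hypothesis $\rep(x)\le r_1$ excludes such slopes. From $\rep(x)=\liminf_k\rho_k\ge r_2$ we get, for every $\varepsilon>0$, that $\rho_k\ge r_2-\varepsilon$ for all large $k$. Fixing $\varepsilon$ strictly smaller than a fixed fraction of $r_{\max}-r_2$, I would run a finite case distinction on the local shape of the continued fraction around a generic large index, always replacing the two unknown tails by their worst admissible completions $\overline{1}$ or $\overline{2,1,1}$, which is legitimate by the monotonicity of $\rho_k$: (i) a large partial quotient $\ge 3$ makes some nearby $\rho_k$ drop well below $r_2-\varepsilon$ even with both tails completed by $\overline{2,1,1}$, so eventually every partial quotient lies in $\{1,2\}$; (ii) a large $2$ not immediately followed by $1,1$ (two consecutive $2$'s, or $2,1,2$) again makes some nearby $\rho_k<r_2-\varepsilon$, so eventually each $2$ opens a block $2,1,1$ and the tail splits into $2,1,1$-blocks separated by runs of extra $1$'s; (iii) a separating run of extra $1$'s of length $\ge 2$ likewise makes some nearby $\rho_k<r_2-\varepsilon$, so the separators are single $1$'s and $\alpha=[0;a_1,\dots,a_K,(2,1,1)^{n_1},1,(2,1,1)^{n_2},1,\dots]$; (iv) if some $n_i=1$ one sees locally the pattern of $[0;\overline{2,1,1,1}]$, for which $\rho_k\le r_2-\delta$ with a fixed $\delta>0$ at infinitely many $k$, contradicting $\rep(x)\ge r_2$; hence every $n_i\ge 2$. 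Each displayed inequality reduces to evaluating the closed form of $\rho_k$ on finitely many explicit windows and comparing the resulting quadratic irrationals with $r_2$; together they give exactly the asserted form of $\alpha$.

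The step I expect to be the real obstacle is quantitative rather than conceptual. Everything happens in a very thin band --- $r_{\max}-r_2\approx 1.2\times 10^{-2}$ and $r_1-r_2\approx 4\times 10^{-4}$ --- so the $\rho_k$-values attached to the forbidden windows lie only slightly below those of the admissible ones, and the comparisons must be carried out by exact arithmetic in $\mathbb{Q}(\sqrt{10})$ and $\mathbb{Q}(\sqrt{149})$ rather than numerically. Two subsidiary points also need care: one must prove the monotonicity of $\rho_k$ in each tail value rigorously, so that testing only the completions $\overline{1}$ and $\overline{2,1,1}$ is valid (here the fractional-linear form of $\rho_k$ and the interlacing of convergents enter); and one must check that each deviation from the block pattern, if it recurs for arbitrarily large indices, really produces arbitrarily large $k$ with $\rho_k<r_2-\varepsilon$, so that it is the $\liminf$ and not merely a single term that is forced below $r_2$, while a deviation present only finitely often leaves the tail of $\alpha$ of the required form in any event. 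Once the case analysis is organized so that these finitely many quadratic-irrational comparisons all succeed with one admissible choice of $\varepsilon$ and $\delta$, the theorem follows.
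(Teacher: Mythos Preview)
Your overall architecture---a case analysis on local patterns of the continued fraction, ruling out deviations from the $(2,1,1)^n,1$ blocks---matches the paper's. But two of your four cases are substantially undercooked, and the mechanism you propose for bounding the backward tail is not the one that actually works.

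The serious gap is case~(ii). You assert that a single occurrence of $2,1,2$ already forces some $\rho_k<r_2-\varepsilon$. This is false at the precision required: the margin $r_{\max}-r_2\approx 0.012$ is far too thin for a length-$3$ window to settle the matter. In the paper, forbidding $1212$ (equivalently $212$, since $22$ is already excluded) is \emph{not} achieved by a direct estimate but by an indirect argument that first forbids a cascade of much longer patterns---$21111$, then $121212$ and $1211212$ (hence $211212$), then $21112111$, $2111212111212$, and $2112111212$---and only then argues combinatorially that any occurrence of $1212$ forces one of these longer forbidden blocks to appear as well (Lemma~\ref{lem:1650}(4)). The length-$13$ pattern is genuinely needed to get the numerics under $r_2$. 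Your case~(iv) has a similar issue: $n_i=1$ does not produce the purely periodic tail $[0;\overline{2,1,1,1}]$; rather, it is excluded because the adjacent separators create the forbidden block $21112111$.

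There is also a methodological difference worth noting. You propose to control the unknown backward tail by monotonicity of $\rho_k$ and testing only the extremal completions $\overline{1}$ and $\overline{2,1,1}$. The paper does something sharper: it uses the standing hypothesis $\rep(x)\ge r_2$ together with Lemma~\ref{buglem:7.6}(2) to extract an explicit lower bound on $t_{k-1}$ in terms of $\eta_{k-1}$, and feeds that bound into the estimate from Lemma~\ref{buglem:7.6}(1) at a later index. This feedback step is what makes the long-pattern estimates close; a pure worst-case tail completion would not be tight enough. If you rework your plan around this feedback mechanism and accept that the list of forbidden windows is longer and the exclusion of $1212$ is indirect, the argument goes through as in Section~\ref{sec:r2}.
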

 
We do not know whether $r_1$ is contained in $\mathrm{rep}(\mathbf{St})$ or not.  These results  may remind us Lagrange (or Markov) spectrums, but it is not clear  if  there are any similarities  between $\mathrm{rep}(\mathbf{St})$ and Lagrange spectrums. 
 
It is possible to compute $\mathrm{rep}(x)$ when the slope of $x$ is a quadratic irrational whose continued fraction expansion has  a short period and $x$ satisfies some additional condition on prefixes. The value $r_2$ is attained on $\rep(x)$ of some $x \in \mathbf{St}$ with the slope $[0,\overline{2,1,1,2,1,1,1}]$ (Lemma \ref{lem:r2}). In contrast, the value $r_1$ is the limit of $\mathrm{rep}(x^{(n)})$ as $n \to \infty$ of some sequence $\{x^{(n)}\}_{n \ge 1}$ in $\mathbf{St}$.  The slope of $x^{(n)}$ is of the form 
\begin{gather*}
[0,a_1, \cdots, a_K, \overline{(2,1,1)^n, 2,1,1,1}].
\end{gather*} 
We will devote \S \ref{sec:accumulate} to compute $\lim_{n \to \infty}\mathrm{rep}(x^{(n)})$ after a preparatory section \S \ref{sec:continuedfraction} on continued fractions. Once one knows the value $r_1$, it is not difficult to show that the open interval $(r_1, r_{\max})$ is  a gap in $\mathrm{rep}(\mathbf{St})$. By this reason, the first assertion of Theorem 0.4 and Theorem 0.5 will be proved in \S \ref{sec:r2}. The second assertion of Theorem 0.4 will be proved in \S \ref{sec:largest}.

 \vskip 10mm 
\section{Sturmian words}
Let $x=x_1x_2\cdots $ be a word over a finite set $\mathbf{A}$. A subword of consecutive letters in $x$ is called a factor of $x$. If both $y$ and $z$ are factors of $x$ and $x$ is a concatenation of $y$ and $z$, i.e., $x = yz$, then $y$ is called a prefix of $x$ and $z$ is called a suffix of $x$.  If $x = x_1 \cdots x_\ell$ is a finite word, then the number $\ell$ is called the {\it length} of $x$ and is denoted by $|x|$.  

Since every Sturmian word is a binary word, it is sufficient to consider only Sturmian words over the alphabet $\mathbf{A} = \{0,1\}$ in order to study $\mathrm{rep}(\mathbf{St})$.  Then every element of $\mathbf{St}$ is described as follows. For an irrational real number $\theta \in (0, 1)$ and a real number $\rho$, we set 
\begin{gather*}
	s_{\theta, \rho}(n)  = \lfloor \theta(n+1)+\rho \rfloor-\lfloor \theta n+\rho \rfloor  \quad \text{and} \quad S_{\theta, \rho}(n)  = \lceil \theta(n+1)+\rho \rceil-\lceil \theta n+\rho \rceil
\end{gather*}
for $n \ge 1$. Then both $s_{\theta, \rho} :=  s_{\theta, \rho}(1)s_{\theta,\rho}(2) \cdots $ and  $S_{\theta, \rho} :=  S_{\theta, \rho}(1)S_{\theta,\rho}(2) \cdots $ are Sturmian words over $\mathbf{A}$. Conversely, for any $x \in \mathbf{St}$, there exists an irrational $\theta \in (0,1)$ and  a real number $\rho$ such that  $x = s_{\theta, \rho}$ or $x = S_{\theta, \rho}$. This $\theta$ is called the slope of $x$. The slope $\theta$ of $x$ is uniquely determined by 
\begin{gather*}
 \theta = \lim_{n \to \infty} \dfrac{(\text{the number of digit $1$ in $x_1x_2 \cdots x_n$})}{n}. 
\end{gather*} 
For some fundamental properties of Sturmian words, e.g., see  \cite[\S 8]{aigner}, \cite[Chapter 10]{allouche}, \cite{arnoux}, \cite[Chapter 2]{lothaire} . The continued fraction expansion of the slope of a Sturmian word  plays an important role. Any irrational real number  $\theta$ has a unique continued fraction expression such as 
\begin{gather*}
	\theta =a_0+\dfrac{1}{a_1+\dfrac{1}{a_2+\dfrac{1}{\ddots}}}  = [a_0,a_1,a_2,\cdots]\,, 
\end{gather*}
where $a_0$ is an integer and $a_1, a_2, \cdots $ are positive integers. For each $n \geq 1$, the fraction 
\begin{gather*}
	\dfrac{p_n}{q_n} = [a_0, a_1, \cdots, a_n]
\end{gather*}
is called the  $n$-th convergent of $\theta$. As usual, we set $(p_0,q_0) =(a_0,1)$ and $(p_{-1}, q_{-1}) = (1,0)$. Then $(p_n, q_n)$ satisfies the recurrence relation
\begin{gather*}
p_n   = a_np_{n-1} + p_{n-2}, \qquad 
q_n   = a_nq_{n-1} + q_{n-2} 
\end{gather*}
for $n \ge 1$. 

By Theorem 0.1 (1) and (3), every Sturmian word $x$ satisfies 
\begin{gather*}
 1 + \frac{1}{n} \le \frac{r(n,x)}{n} \le 2 + \frac{1}{n} 
\end{gather*}
for all $n \ge 1$, and $r(n, x) /n= 2 + 1/n$ holds for infinitely many $n$. Therefore, 
\begin{gather*}
\limsup_{n \to \infty} \frac{r(n,x)}{n} = 2
\end{gather*}
holds for all $x \in \mathbf{St}$. If $r(n,x) \ne 2n+1$ for some $n$ , then $r(n,x) \le 2n$ since $r(n,x)$ is a positive integer.  In this case, by \cite[Lemma 3.5]{bugeaud} and  Theorem 0.1(1), we have 
\begin{gather*}
 r(n,x) = r(n-1, x) + 1 ,
\end{gather*}
and then
$$ \frac{r(n,x)}{n} = \frac{r(n-1,x)}{n-1} + \frac{1}{n} \left ( 1 - \frac{r(n-1,x)}{n-1} \right ) < \frac{r(n-1,x)}{n-1} \,. $$
Namely, the function $n \mapsto r(n,x)/n$ is a piecewise decreasing function. Figure 1 illustrates a behavior of the function $n \to r(n,x)/n$ when $x= s_{\alpha, 1/3}$ with $\alpha= [0, \overline{2,1,1}]$. It is known by \cite[\S 7]{bugeaud} that $\mathrm{rep}(s_{\alpha, 1/3}) = r_{\max}$. It is not yet known how the intercept $\rho$  of a Sturmian word $x = s_{\theta,\rho}$ or $x = S_{\theta, \rho}$  affects $\mathrm{rep}(x)$. 

\begin{figure}[!htb]
\centering
\includegraphics[width=10cm,clip]{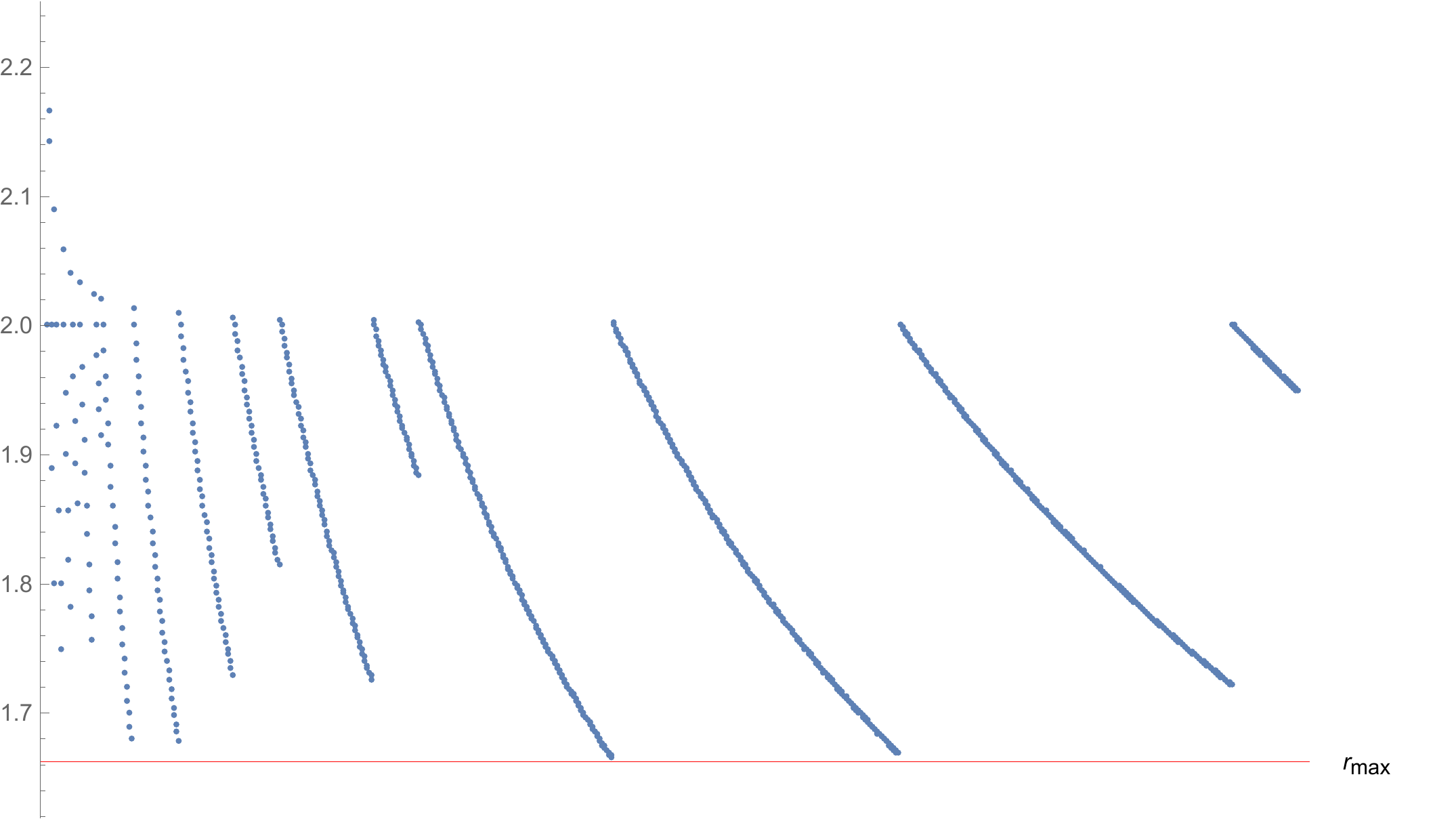}
\caption{Plot of $r(s_{\alpha,1/3}, n)/n$ for $1 \le n \le 1000$}
\end{figure}

For an irrational real number $\theta = [0,a_1,a_2, \cdots ]$, the Sturmian word $s_{\theta, 0}$ of  intercept $0$ is given by the limit of some finite words. For a sequence $\{ a_k \}_{k \geq 1}$ of positive integers, we define inductively a sequence $\{ M_k \}_{k \geq 0}$ of finite words  over $\mathbf{A}$ as follows:
\begin{gather*}
	M_0=0, \qquad M_1=0^{a_1-1}1, \qquad M_{k+1}=M^{a_{k+1}}_k M_{k-1} \;\;(k \geq 1) . 
\end{gather*}
This $\{ M_k \}_{k \ge 0}$ is called the characteristic block defined from $\{a_k\}_{k \ge 1}$. 
Let $\{ p_k/q_k \}_{k \ge 0}$ be the sequence of convergents of $\theta$. Then $q_k = |M_k|$ and $p_k$ equals the number of digit $1$ in $M_k$ for all $k \geq 0$. Every $M_k$ is a prefix of $s_{\theta, 0}$ (see, e.g., \cite[Theorem 8.33]{aigner}),   namely we have
\begin{gather*}
	s_{\theta, 0} = \displaystyle \lim_{k \to \infty} M_k . 
\end{gather*}
The Sturmian word $s_{\theta, 0}$ is called the  characteristic word of slope $\theta$.
It is known that only the last two letters of $M_{k+1} M_k$ and $M_k M_{k+1}$ are different (see e.g.,  \cite[Proposition 2.2.2]{lothaire}). 
For a non-empty finite word $U$, 
we write $U^{-}$ for the word removed the last letter of $U$. 
For each $k \geq 1$, we set 
\begin{gather*}
	\widetilde{M}_k=(M_k M_{k-1})^{--}=(M_{k-1}M_k)^{--}
\end{gather*}
and observe that $\widetilde{M}_k$ is a prefix of $M_{k+1}$. 

The following lemma proved in \cite[\S7]{bugeaud} plays a key role. 

\begin{lem}[{\cite[Lemma 7.2]{bugeaud}}] \label{buglem:7.2}
	Let $x$ be a Sturmian word of slope $\theta = [0,a_1,a_2, \cdots]$, $\{ p_k/q_k \}_{k \ge 0}$ be the sequence of convergents of $\theta$ and $\{M_k\}_{k \ge 0}$ be the characteristic block defined from $\{a_k\}_{k \ge 1}$. 
	Then, for each $k \geq 1$, there exists a unique finite word $W_k$ satisfying 
	\begin{enumerate}
		\item[$\text{[1]}_k$] $x=W_k M_k \widetilde{M}_k \cdots$,\; where $W_k$ is a non-empty suffix of $M_k$, \\
		or
		\item[$\text{[2]}_k$] $x=W_k M_{k-1} M_k \widetilde{M}_k \cdots$,\; where $W_k$ is a non-empty suffix of $M_k$, \\
		or
		\item[$\text{[3]}_k$] $x=W_k M_k \widetilde{M}_k \cdots$,\; where $W_k$ is a non-empty suffix of $M_{k-1}$, 
	\end{enumerate}
	and all the $(2 q_k + q_{k-1})$ cases are mutually exclusive. 
	Furthermore, we have $W_{k+1}=W_k M_{k-1}$ if the case $[2]_k$ holds and $W_{k+1}=W_k$ if the case $[3]_k$ holds . 
\end{lem}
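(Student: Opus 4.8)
The plan is to establish $[1]_k$--$[3]_k$, their mutual exclusivity, and the recurrences for $W_{k+1}$ together, by induction on $k$. The combinatorial inputs are the identity $M_{k+1}=M_k^{a_{k+1}}M_{k-1}$ and the fact recalled above that $M_{k+1}M_k$ and $M_kM_{k+1}$ differ only in their last two letters; this gives $(M_{k-1}M_k)^{--}=(M_kM_{k-1})^{--}=\widetilde M_k$, hence $\widetilde M_{k+1}=(M_{k+1}M_k)^{--}=M_k^{a_{k+1}}\widetilde M_k$, and (also recalled) $\widetilde M_k$ is a prefix of $M_{k+1}$. I would prove the slightly stronger assertion that $x=W_k Z_k$, where $W_k$ is as in $[1]_k$--$[3]_k$ and $Z_k$ is an infinite concatenation of $M_k$- and $M_{k-1}$-blocks whose block-pattern (coding the two blocks by two letters) is a Sturmian word and which begins with $M_k\widetilde M_k$ or with $M_{k-1}M_k\widetilde M_k$.

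For the base case $k=1$ I would argue directly from the structure of a Sturmian word over $\{0,1\}$. Since $0<\theta<1$, the maximal runs of $0$'s immediately followed by a $1$ have length $a_1-1$ or $a_1$; grouping each such run with the following $1$ exhibits $x$, after a non-empty incomplete initial segment, as an infinite concatenation of $M_1=0^{a_1-1}1$ and $M_0=0$ in which two copies of $M_0$ are never adjacent and whose block-pattern is a Sturmian word (of an explicit slope, irrelevant here). Inspecting how the prefix of $x$ meets the first complete blocks — the first complete block is $M_1$, or it is $M_0$ and then necessarily $M_1$ — and using $\widetilde M_1=(M_1M_0)^{--}=(M_0M_1)^{--}$, one obtains exactly one of $[1]_1$, $[2]_1$, $[3]_1$, with $W_1$ the incomplete initial segment: a non-empty suffix of $M_1$ in the first two cases, of $M_0$ in the third. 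The $2q_1+q_0$ possibilities are precisely the admissible pairs (case, $W_1$).

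For the inductive step, assume the stronger assertion at level $k$. The block-pattern of $Z_k$ is a Sturmian word in which $M_{k-1}$ is isolated while the $M_k$'s occur in runs of two consecutive lengths, so one may unambiguously group each run of $a_{k+1}$ consecutive $M_k$'s followed by an $M_{k-1}$ into a single $M_{k+1}$ — this is exactly the level-$1$ regrouping of that pattern — rewriting $Z_k$ as an infinite concatenation of $M_{k+1}$- and $M_k$-blocks, with at most one incomplete $M_{k+1}$-block at the front which is merged into the prefix. Using $\widetilde M_{k+1}=M_k^{a_{k+1}}\widetilde M_k$ and that $\widetilde M_k$ is a prefix of $M_{k+1}$, one checks that the regrouped concatenation begins with $M_{k+1}\widetilde M_{k+1}$ or $M_kM_{k+1}\widetilde M_{k+1}$ — this is where the interplay of $\widetilde M_k$, $\widetilde M_{k+1}$ and the ``last two letters'' fact is used — so, together with the merged partial block, we obtain the stronger assertion at level $k+1$. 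Tracking how the partial block transforms gives the recurrences: in case $[3]_k$ the segment $W_k$, a suffix of $M_{k-1}$ and hence of $M_{k+1}$, survives intact so $W_{k+1}=W_k$; in case $[2]_k$ the leading $M_{k-1}$ is merged into the new initial partial $M_{k+1}$-block so $W_{k+1}=W_kM_{k-1}$; case $[1]_k$ may land in any of $[1]_{k+1}$, $[2]_{k+1}$, $[3]_{k+1}$ depending on $a_{k+1}$ and on how many $M_k$'s follow $W_k$ in $Z_k$. Finally, mutual exclusivity at every level — equivalently, that exactly the $2q_k+q_{k-1}$ listed possibilities occur and no $x$ satisfies two of them — follows from the recognizability of the $\{M_k,M_{k-1}\}$-block decomposition of a Sturmian word of slope $\theta$, or concretely from a standard lower bound on the periods of factors of such words (a consequence of $M_{k+1}M_k\ne M_kM_{k+1}$), which also forces $W_k$ to be non-empty.

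The main obstacle is the inductive step: carrying the incomplete initial segment correctly through the regrouping, and verifying that the regrouped concatenation really starts with $M_{k+1}\widetilde M_{k+1}$ (or $M_kM_{k+1}\widetilde M_{k+1}$) rather than merely with $M_{k+1}\widetilde M_k$. This is the only genuinely delicate step; the base case, the period estimate behind mutual exclusivity, and the counting $2q_k+q_{k-1}$ are then routine.
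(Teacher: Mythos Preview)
The paper does not prove this lemma at all: it is quoted from Bugeaud and Kim \cite[Lemma~7.2]{bugeaud} and used as a black box throughout, so there is no in-paper argument to compare your proposal against.

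On its own merits, your inductive block-regrouping strategy is the standard route to such a structural result and is essentially how the original Bugeaud--Kim argument proceeds. Strengthening the induction hypothesis to record that the tail $Z_k$ has a Sturmian $\{M_{k-1},M_k\}$-block pattern is exactly right: this is the desubstitution property of Sturmian words, and it is what makes the passage from level $k$ to level $k+1$ go through. Your identification of the delicate point --- checking that after regrouping the tail begins with $M_{k+1}\widetilde M_{k+1}$ (or $M_kM_{k+1}\widetilde M_{k+1}$) and not merely $M_{k+1}\widetilde M_k$ --- is accurate; this is where $\widetilde M_{k+1}=M_k^{a_{k+1}}\widetilde M_k$ and the fact that the block pattern is itself Sturmian (hence has enough $M_k$'s after the first $M_{k+1}$) are both needed. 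The mutual-exclusivity count $2q_k+q_{k-1}$ then follows from the three-distance theorem, or equivalently from the fact that a Sturmian word of slope $\theta$ has exactly $2q_k+q_{k-1}$ right-extensions at the relevant scale, which amounts to your recognizability claim. Nothing in your plan is wrong; it is a sketch rather than a proof, but the sketch is the correct one.
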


We continue the notation in Lemma \ref{buglem:7.2}. For a given Sturmian word $x$, we use the following notation throughout this paper:  
\begin{gather*}
	\eta_k:=\dfrac{q_{k-1}}{q_k}, \qquad 
	t_k:=\dfrac{|W_k|}{q_k}, \qquad
	\varepsilon_k:=\dfrac{2}{q_k}.	
\end{gather*}
Here the dependency of $x$ is omitted from these notations to simplify descriptions. 
The Golden Ratio $(1+\sqrt{5})/2 = 1.6180 \cdots$ is denoted by $\varphi$.  
 We will frequently use the following lemmas. 

\begin{lem}[{\cite[Lemma 7.4]{bugeaud}}] \label{buglem:7.4}
	Let $x$ be a Sturmian word. 
	\begin{enumerate}
		\item If the case $[1]_k$ holds, then 
		\begin{gather*}
		\dfrac{r(n, x)}{n}<\varphi +2 \varepsilon_k
		\end{gather*}	
			for some $n$ with $q_k-2 \leq n \leq |W_k|+q_k+q_{k-1}-2$. 
		\item If the case $[3]_k$ holds,  then 
			\begin{gather*}
			\dfrac{r(n, x)}{n}<\varphi +2 \varepsilon_k
			\end{gather*}
			for some $n$ with $|W_k|+q_k-2 \leq n \leq |W_k|+q_{k+1}+q_k-2$. 
	\end{enumerate}
\end{lem}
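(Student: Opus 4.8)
The plan is to exploit the explicit factorizations of $x$ provided by Lemma~\ref{buglem:7.2} in order to locate, in each of the two cases, a short factor of $x$ that occurs twice inside a controlled prefix, thereby giving an upper bound on $r(n,x)$ for a suitable value of $n$. Recall that in case $[1]_k$ we have $x = W_k M_k \widetilde{M}_k\cdots$ with $W_k$ a non-empty suffix of $M_k$, and that $\widetilde{M}_k = (M_kM_{k-1})^{--}$ is a prefix of $M_{k+1}$, hence a prefix of $M_k\widetilde{M}_k\cdots$; in particular $M_k\widetilde{M}_k$ begins with $M_k$ and then continues agreeing with $M_k$ for $|\widetilde{M}_k| = q_k + q_{k-1} - 2$ more letters. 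The key combinatorial input is the standard fact (already quoted in the excerpt) that $M_kM_{k-1}$ and $M_{k-1}M_k$ differ only in their last two letters, which forces long common factors between $M_k$ and its shifts. First I would make precise the repetition: inside the prefix $W_k M_k \widetilde{M}_k$ of $x$, the factor consisting of (a suffix of $W_k$ together with) an initial segment of $M_k$ reappears starting at position $|W_k| + 1$, i.e.\ at the beginning of the displayed copy of $M_k$; more usefully, since $\widetilde{M}_k$ is a prefix of $M_{k+1} = M_k^{a_{k+1}}M_{k-1}$ and also $M_{k-1}M_k$ shares all but its last two letters with $M_kM_{k-1}$, one gets two occurrences of a factor of length roughly $q_k$ within a prefix of length roughly $|W_k| + q_k + q_{k-1}$.

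Concretely, for part (1) I would argue as follows. Write $m := |W_k| + q_k + q_{k-1} - 2$, which is the length of the prefix $W_k M_k \widetilde{M}_k$ of $x$. Because $\widetilde{M}_k$ is a common prefix of $M_k M_{k-1}$ and $M_{k-1}M_k$, the word $M_{k-1}\widetilde{M}_k$ (of length $q_{k-1} + q_k + q_{k-1} - 2$) occurs as a factor of $M_kM_{k-1}M_k\cdots$, and this yields two occurrences of a factor of length $n$ for an $n$ with $q_k - 2 \le n$, the second occurrence ending at or before position $m$. Dividing, $r(n,x)/n \le m/(q_k-2)$ for the relevant $n$; but we want the sharper bound $\varphi + 2\varepsilon_k$. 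To get it one uses that $|W_k| \le q_k$ (as $W_k$ is a suffix of $M_k$) together with the inequality $q_{k-1}/q_k < \varphi - 1$ is \emph{not} generally available, so instead one chooses $n$ optimally: take $n = |W_k| + q_k + q_{k-1} - 2$ minus the length of the repeated block's offset, so that $m/n$ telescopes to a quantity of the form $1 + (q_{k-1} + \text{small})/(\text{something} \ge q_k - 2)$. The cleanest route is: the factor $\widetilde{M}_k$ of length $q_k + q_{k-1} - 2$ occurs both as the suffix of $M_k M_{k-1}$-part read inside $M_k\cdots$ and again shifted by $q_k - q_{k-1}$ (using that $M_k$ and $M_{k-1}M_k$-type words agree up to the last two letters), giving $r(q_k + q_{k-1} - 2 - (q_k - q_{k-1}), x) \le |W_k| + q_k + q_{k-1} - 2$; one then checks $n = 2q_{k-1} - 2 \ge q_k - 2$ fails in general, so the correct shift must instead be by $q_{k-1}$, landing on $n$ with $q_k - 2 \le n \le q_k + q_{k-1} - 2$ and $r(n,x) \le |W_k| + q_k + q_{k-1} - 2 \le 2q_k + q_{k-1} - 2 \le (q_k - 2)\varphi + $ (correction term $\le 4$), using $q_{k-1}/q_k < 1$ and $1 + 1 + 1 = 3 < \varphi + \varphi^{-1}$-type estimates absorbed into $2\varepsilon_k = 4/q_k$. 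Part (2), case $[3]_k$, is handled by the same scheme applied one level up: since $x = W_k M_k \widetilde{M}_k\cdots$ with $W_k$ now a suffix of $M_{k-1}$, and $\widetilde{M}_k$ is a prefix of $M_{k+1}$, I would look for the repeated factor inside the prefix of length $|W_k| + q_{k+1} + q_k - 2$, using $M_{k+1}M_k$ versus $M_kM_{k+1}$ agreeing up to their last two letters, which produces a repetition of a factor of length $n$ with $|W_k| + q_k - 2 \le n \le |W_k| + q_{k+1} + q_k - 2$ and with $r(n,x) \le |W_k| + q_{k+1} + q_k - 2$; the ratio is then $\le 1 + q_{k+1}/(|W_k| + q_k - 2) \le 1 + q_{k+1}/(q_k - 2)$, which is $< \varphi + 2\varepsilon_k$ once one invokes $q_{k+1} = a_{k+1}q_k + q_{k-1}$ — wait, that can be large, so in fact one must instead repeat a factor of length close to $q_{k+1}$ itself (not $q_k$), exactly as in part (1) but with indices shifted by one, so that the ratio is $1 + (q_k + O(1))/(q_{k+1} - O(1)) \le 1 + q_k/q_{k+1} \cdot(1 + o(1)) < \varphi$ plus the $2\varepsilon_k$ slack.

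The main obstacle, and the step I would spend the most care on, is pinning down the exact length $n$ of the repeated factor and the exact shift so that the crude ratio $(\text{prefix length})/n$ actually comes in below $\varphi + 2\varepsilon_k$ rather than merely below $2$. This requires the precise two-letter-difference property of $M_kM_{k-1}$ vs.\ $M_{k-1}M_k$ (so that a block of length $q_k + q_{k-1} - 2$, not shorter, genuinely repeats), the bound $|W_k| \le q_k$ from Lemma~\ref{buglem:7.2}, and the elementary continued-fraction inequality $q_{k-1} < q_k$ together with $\varphi^2 = \varphi + 1$ to absorb the additive constants into the $2\varepsilon_k$ term; everything else is bookkeeping. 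I would organize the writeup by first isolating, as a sub-claim, the statement ``the factor $\widetilde{M}_k$ (resp.\ $\widetilde{M}_{k+1}$) occurs at least twice within the indicated prefix of $x$, with the stated gap between occurrences,'' prove that purely from the recurrence $M_{k+1} = M_k^{a_{k+1}}M_{k-1}$ and the two-letter property, and then derive both parts (1) and (2) by dividing and estimating.
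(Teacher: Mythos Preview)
The paper does not prove this lemma; it is quoted verbatim from Bugeaud--Kim \cite[Lemma~7.4]{bugeaud} and used as a black box. So there is no in-paper argument to compare against, and your task was effectively to reconstruct the original proof.

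Your proposal has a genuine gap. The high-level plan---use the factorisation $x=W_kM_k\widetilde{M}_k\cdots$ from Lemma~\ref{buglem:7.2}, locate a repeated factor, and bound $r(n,x)/n$---is correct, but you never succeed in producing a bound below $\varphi$. Each single repetition you try gives a ratio that can exceed $\varphi$ for some admissible value of $|W_k|$ (you notice this yourself: ``wait, that can be large''). For instance, your bound $r(n,x)\le |W_k|+2q_k+q_{k-1}-2$ with $n$ near $q_k-2$ gives a ratio close to $2+\eta_k$, not $\varphi$; and even the sharper $1+q_k/(|W_k|+q_k+q_{k-1}-2)$ exceeds $\varphi$ whenever $|W_k|$ is small and $q_k/q_{k-1}$ is large.

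What is missing is the key structural observation behind the Bugeaud--Kim argument: one must exhibit \emph{two} values of $n$ in the stated range, giving two upper bounds on $r(n,x)/n$ of complementary shapes---roughly $1+\dfrac{A}{A+B}$ and $1+\dfrac{B}{A}$ for suitable $A,B>0$ depending on $|W_k|,q_k,q_{k-1}$---and then note that the \emph{minimum} of these two expressions is always at most $\varphi$ (equality exactly when $B/A=\varphi-1$, via $\varphi^2=\varphi+1$). This min-of-two trick is precisely what makes the constant $\varphi$ appear and is the same mechanism visible in Lemma~\ref{buglem:7.6}. Your write-up mentions $\varphi^2=\varphi+1$ in passing but never sets up the pair of bounds or the minimisation; without that step the argument cannot close, because no single repetition yields a ratio uniformly below $\varphi$ over the whole range $1\le |W_k|\le q_k$.
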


\begin{lem}[{\cite[Lemma 7.5]{bugeaud}}] \label{buglem:7.5}
	Let $x$ be a Sturmian word of slope $\theta = [0,a_1,a_2, \cdots]$. Assume that the case $[2]_k$ holds  and $a_k \geq 3$ for some $k$. If $k$ is sufficiently large, then 
		\begin{gather*}
		\dfrac{r(n, x)}{n}<\dfrac{\sqrt{17}+9}{8}+2\varepsilon_k
			=1.6403 \cdots +2\varepsilon_k
	\end{gather*}
	for some integer $n$ with $q_k/2-2 \leq n \leq q_k+q_{k-1}-2$.  
\end{lem}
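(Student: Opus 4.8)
The plan is to follow the method of Lemma~\ref{buglem:7.4}: for a well-chosen integer $n$ in the prescribed range I exhibit two occurrences of a single factor of length $n$ inside a short prefix of $x$, read off an upper bound for $r(n,x)/n$, and then optimise over the one free parameter $t_k=|W_k|/q_k$. First I would invoke Lemma~\ref{buglem:7.2} in the case $[2]_k$, so $x=W_kM_{k-1}M_k\widetilde M_k\cdots$ with $W_k$ a non-empty suffix of $M_k$; put $\ell:=|W_k|$. Using $a_k\ge 3$ together with the elementary facts that $M_k=M_{k-1}^{a_k}M_{k-2}$, that $M_{k-2}$ is a prefix of $M_{k-1}$, and that $M_{k-1}M_{k-2}$ and $M_{k-2}M_{k-1}$ agree except in their last two letters, I would record
\begin{gather*}
	M_{k-1}M_k=M_{k-1}^{a_k+1}M_{k-2},\qquad \widetilde M_k=M_{k-1}^{a_k+1}M_{k-2}^{--}=M_kM_{k-1}^{--},
\end{gather*}
which express that $M_k$ is a prefix of $M_{k-1}M_k$ and that $M_{k-1}M_k$, $M_kM_{k-1}$ and $\widetilde M_k$ each agree, up to their last two letters, with the purely periodic word $M_{k-1}M_{k-1}M_{k-1}\cdots$. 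These identities, and all the ``$-2$'' corrections they carry, require only $q_{k-1},q_{k-2}\ge 2$, i.e.\ $k$ large.

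I would then extract two estimates. \emph{Estimate A} (effective when $t_k$ is small): the word $\widetilde M_k$, of length $q_k+q_{k-1}-2$, occurs in $x$ at position $\ell+1$, as the corresponding prefix of $M_{k-1}M_k$, and at position $\ell+q_{k-1}+1$, since $M_k\widetilde M_k$ begins with $M_kM_{k-1}^{--}=\widetilde M_k$; hence for every $n$ with $q_k/2-2\le n\le q_k+q_{k-1}-2$ the length-$n$ prefix of $\widetilde M_k$ occurs at both these positions and
\begin{gather*}
	\frac{r(n,x)}{n}\le 1+\frac{\ell+q_{k-1}}{n},
\end{gather*}
which is smallest at $n=q_k+q_{k-1}-2$. \emph{Estimate B} (effective when $t_k\ge 1/2$): since $W_kM_{k-1}$ is a suffix of $M_kM_{k-1}$, which agrees with $M_{k-1}M_{k-1}\cdots$ except in its last two letters, the prefix $W_kM_{k-1}$ of $x$ has period $q_{k-1}$ in all but its last two positions, so $x_1^{\,n}=x_{q_{k-1}+1}^{\,q_{k-1}+n}$ for $n\le\ell-2$; thus for $q_k/2-2\le n\le\ell-2$,
\begin{gather*}
	\frac{r(n,x)}{n}\le 1+\frac{q_{k-1}}{n},
\end{gather*}
which is smallest at $n=\ell-2$.

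Finally I would combine them. Because $a_k\ge 3$, we have $q_{k-1}\le q_k/3$, so, discarding lower-order terms and writing $t=t_k$, $\eta=q_{k-1}/q_k\le 1/3$, the two bounds at their optimal $n$ read $1+\dfrac{t+\eta}{1+\eta}$ and $1+\dfrac{\eta}{t}$; the first increases in $t$, the second decreases in $t$, and both increase in $\eta$, so the extreme case is $\eta=1/3$, where the two meet at $t=t^{*}:=(\sqrt{17}-1)/6$, the positive root of $9t^2+3t-4=0$ — this is where $\sqrt{17}$ enters — with common value $1+\dfrac{1/3}{t^{*}}=1+\dfrac{2}{\sqrt{17}-1}=\dfrac{\sqrt{17}+9}{8}$. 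Accordingly, for $t_k\le t^{*}$ I use Estimate A with $n=q_k+q_{k-1}-2$, and for $t_k\ge t^{*}$ (which forces $t_k>1/2$, so Estimate B applies and $n=\ell-2\ge q_k/2-2$) I use Estimate B with $n=\ell-2$; in either case $n$ lies in $[q_k/2-2,\,q_k+q_{k-1}-2]$ and the bound is at most $(\sqrt{17}+9)/8$ plus an error of size $O(1/q_k)$.

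The main difficulty is the bookkeeping rather than the idea. One has to locate exactly where the period-$q_{k-1}$ pattern of these prefixes fails (precisely the ``last two letters'' behaviour of the $M_k$), verify that the optimum over $t_k$ persists once the ``$-2$'' terms are retained, and — the genuinely delicate step — check that the $O(1/q_k)$ error really is bounded by $2\varepsilon_k=4/q_k$, for which one uses the strict inequality $q_{k-1}<q_k/3$ when $a_k=3$ (and $q_{k-1}\le q_k/4$ when $a_k\ge 4$). Together with $q_{k-1},q_{k-2}\ge 2$, this is what ``$k$ sufficiently large'' supplies.
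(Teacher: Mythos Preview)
This lemma is quoted from \cite[Lemma~7.5]{bugeaud} and the present paper gives no proof of it, so there is nothing in the paper to compare your argument against directly. That said, your sketch is correct and is almost certainly the intended argument: your Estimate~A at $n=q_k+q_{k-1}-2$ is exactly the bound of Lemma~\ref{buglem:7.6}(2), which is also quoted from \cite{bugeaud}, and your Estimate~B supplies the complementary inequality needed when $t_k$ is large. The identities you use (that $(W_kM_{k-1})^{--}$ is a factor of the purely $q_{k-1}$-periodic word $M_{k-1}^{a_k+2}$, and that $\widetilde M_k$ is both a prefix of $M_{k-1}M_k$ and of $M_k\widetilde M_k$) are standard consequences of $M_{k-1}M_{k-2}$ and $M_{k-2}M_{k-1}$ agreeing off their last two letters, and the optimisation over $t_k$ with $\eta_k<1/3$ leading to the root $t^\ast=(\sqrt{17}-1)/6$ of $9t^2+3t-4=0$ and the value $(\sqrt{17}+9)/8$ is carried out correctly.

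One small point to tighten in a full write-up: you assert that the total error is bounded by $2\varepsilon_k=4/q_k$, but the two ``$-2$'' corrections enter differently in the two estimates (in the denominator $1+\eta_k-2/q_k$ for Estimate~A, in $t_k-2/q_k$ for Estimate~B), and in Estimate~B the denominator $t_k-2/q_k$ is bounded below by roughly $1/2$, not by $1$. A clean way to handle this is to note that in both cases the bound has the form $1+\dfrac{\text{numerator}}{\text{denominator}-2/q_k}$ with denominator $\ge 1/2$ and numerator $\le 1$, so the correction is at most $\dfrac{2/q_k}{(1/2)(1/2-2/q_k)}$, which is indeed $<4/q_k$ once $q_k$ is large; this is the content of ``$k$ sufficiently large''.
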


\begin{lem}[{\cite[Lemma 7.6]{bugeaud}}] \label{buglem:7.6}
	Let $x$ be a Sturmian word. If the case $[2]_k$ holds for some sufficiently large $k$, then we have 
	\begin{enumerate}
		\item $\dfrac{r(|W_k|+q_k+q_{k-1}-2, x)}{|W_k|+q_k+q_{k-1}-2}
			<1+\dfrac{1+\eta_k}{t_k+1+\eta_k}+\varepsilon_k$, 
		\item $\dfrac{r(q_k+q_{k-1}-2, x)}{q_k+q_{k-1}-2}
			<1+\dfrac{t_k+\eta_k}{1+\eta_k}+\varepsilon_k$. 
	\end{enumerate}
\end{lem}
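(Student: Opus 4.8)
The plan is to prove both inequalities by the same scheme: inside the prefix of $x$ prescribed by case $[2]_k$ I will exhibit two occurrences of a well-chosen factor of the prescribed length $n$, take as an upper bound for $r(n,x)$ the length of the shortest prefix of $x$ containing both occurrences, and then reduce that bound to the claimed estimate by elementary algebra, writing $w=|W_k|=t_kq_k$, $q=q_k$ and $q'=q_{k-1}=\eta_kq_k$. Recall that case $[2]_k$ means $x=W_kM_{k-1}M_k\widetilde{M}_k\cdots$ with $W_k$ a nonempty suffix of $M_k$; write $M_k=UW_k$. I will use the identities $(M_{k-1}M_k)^{--}=(M_kM_{k-1})^{--}=\widetilde{M}_k$ together with the facts, both derived from the statement that $\widetilde{M}_k$ is a prefix of $M_{k+1}=M_k^{a_{k+1}}M_{k-1}$, that $M_k$ is a prefix of $\widetilde{M}_k$ and that $\widetilde{M}_k$ is a prefix of $M_k\widetilde{M}_k$.

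For part (1) I will take $n=w+q+q'-2$, so that $|W_k\widetilde{M}_k|=n$. On the one hand, $W_k\widetilde{M}_k$ is a prefix of $x$, because $x=W_k(M_{k-1}M_k)\widetilde{M}_k\cdots$ and $(M_{k-1}M_k)^{--}=\widetilde{M}_k$. On the other hand, in the copy of $M_k=UW_k$ occupying positions $w+q'+1,\dots,w+q'+q$ of $x$, the suffix $W_k$ sits at positions $q+q'+1,\dots,w+q'+q$ and is immediately followed by the explicit block $\widetilde{M}_k$; hence $W_k\widetilde{M}_k$ occurs again starting at position $q+q'+1$. These two occurrences are contained in the prefix of $x$ of length $(q+q')+n=w+2q+2q'-2$, so $r(n,x)\le w+2q+2q'-2$. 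Substituting $w=t_kq$ and $q'=\eta_kq$, the asserted inequality is equivalent to $q(q+q')<(w+q+q')(w+q+q'-2)$; since $w\ge1$, the right-hand side is at least $(q+q')^2-1$, which exceeds $q(q+q')$ by $q'(q+q')-1>0$, so the inequality holds.

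For part (2) I will take $n=q+q'-2=|\widetilde{M}_k|$. From $(M_{k-1}M_k)^{--}=\widetilde{M}_k$ the factor $\widetilde{M}_k$ occurs at position $w+1$ of $x$, and from the fact that $\widetilde{M}_k$ is a prefix of $M_k\widetilde{M}_k$ it occurs again at position $w+q'+1$, the start of the block $M_k\widetilde{M}_k\cdots$. Both occurrences lie in the prefix of length $(w+q'+1)+n-1=w+q+2q'-2$, so $r(n,x)\le w+q+2q'-2$; substituting as before reduces the claim to $q(w+q')<(q+q')(q+q'-2)$, and since $w\le q$ the difference $(q+q')(q+q'-2)-q(w+q')$ is at least $(q+q')(q'-2)$, which is positive as soon as $q'=q_{k-1}\ge3$.

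I expect the delicate points to be twofold. First, identifying the second occurrence of the chosen factor rests on the combinatorial fact that $\widetilde{M}_k$ is a prefix of $M_k\widetilde{M}_k$, equivalently that the first $q_{k-1}-2$ letters of $\widetilde{M}_k$ recur at its positions $q_k+1,\dots,q_k+q_{k-1}-2$; a proof needs a short case split according to whether $a_{k+1}\ge2$, when $M_kM_k$ is a prefix of $M_{k+1}$, or $a_{k+1}=1$, when one uses instead that $M_{k-1}$ is a prefix of $M_k$. Second, one must check that the $O(1/q_k)$ discrepancies produced by the various $-2$ terms are exactly absorbed by $\varepsilon_k=2/q_k$; this is the only place where the hypothesis that $k$ be sufficiently large (so that $q_{k-1}$ is large) is used. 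Everything else is bookkeeping of positions inside the explicitly known prefix of $x$.
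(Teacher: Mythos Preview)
The paper does not give its own proof of this lemma: it is quoted verbatim from \cite[Lemma~7.6]{bugeaud} and used as a black box, so there is nothing in the present paper to compare your argument against.

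That said, your argument is correct and is essentially the natural one. In both parts you correctly locate two occurrences of the chosen factor inside the explicitly known prefix $W_kM_{k-1}M_k\widetilde{M}_k$ of $x$, and the algebra reducing $r(n,x)\le w+2q+2q'-2$ (resp.\ $w+q+2q'-2$) to the stated inequalities is right. Two minor remarks: in part~(2) your verification that $\widetilde{M}_k$ is a prefix of $M_k\widetilde{M}_k$ can be shortened by writing $\widetilde{M}_k=(M_kM_{k-1})^{--}$ and $\widetilde{M}_k=(M_{k-1}M_k)^{--}$, which immediately gives that the first $q_{k-1}-2$ letters of $\widetilde{M}_k$ equal $M_{k-1}^{--}$, a prefix of $\widetilde{M}_k$---no case split on $a_{k+1}$ is needed. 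And you correctly identified that the hypothesis ``$k$ sufficiently large'' is only used to guarantee $q_{k-1}\ge 3$ in the final inequality of part~(2).
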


\vskip 10mm
\section{A gap in $\mathrm{rep}(\mathbf{St})$} \label{sec:r2}

 First, we show that some lower bound of $\mathrm{rep}(x)$ gives a restriction on  partial quotients of the continued fraction expansion of the slope of $x$. In this section, for convenience sake, we often regard a finite sequence of one digit numbers as a finite word over $\{0,1, \cdots, 9\}$.  A sequence $a_1, \cdots, a_\ell$ of one digit numbers is denoted by $a_1 \cdots a_\ell$. Then, the equality $a_1 \cdots a_\ell= b_1 \cdots b_\ell$ of two finite sequences of one digit numbers is the same meaning as $a_i = b_i$ for $i = 1, \cdots, \ell$.  We put 
\begin{gather*}
	r_3 :=\dfrac{2(1869+2\varphi)}{2277}=1.64448\cdots . 
\end{gather*}

\begin{lem} \label{lem:21111}
	Let $x$ be a Sturmian word and $\theta = [0, a_1, a_2, \cdots]$ be the slope of $x$. 
	If $\rep(x)>r_3$ and $k$ is  sufficiently large, 
	then $a_k \in \{1,2 \}$ and 
	the sequence $a_ka_{k+1}a_{k+2}a_{k+3}a_{k+4}= 21111$ can not appear in $\{a_n\}_n$. 
\end{lem}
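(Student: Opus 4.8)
\emph{Proof plan.} The whole argument will rest on the observation that $r_3$ is strictly larger than each of $\varphi = 1.6180\cdots$, $\tfrac{\sqrt{17}+9}{8} = 1.6403\cdots$, and the quantities that Lemma~\ref{buglem:7.6} produces along a block $21111$, while $\varepsilon_k \to 0$ as $k \to \infty$.

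First I would show that for every sufficiently large $k$ the case $[2]_k$ of Lemma~\ref{buglem:7.2} holds and $a_k \in \{1,2\}$. If case $[1]_k$ held for infinitely many $k$, then for each such $k$ Lemma~\ref{buglem:7.4}(1) supplies an integer $n_k \ge q_k - 2$ with $r(n_k,x)/n_k < \varphi + 2\varepsilon_k$; extracting a subsequence along which $n_k \to \infty$ forces $\rep(x) \le \varphi < r_3$, a contradiction. The same reasoning with Lemma~\ref{buglem:7.4}(2) excludes case $[3]_k$ for large $k$, so by the exclusivity clause of Lemma~\ref{buglem:7.2} the case $[2]_k$ must hold for all large $k$. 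If moreover $a_k \ge 3$ for infinitely many such $k$, Lemma~\ref{buglem:7.5} yields $n_k \ge q_k/2 - 2$ with $r(n_k,x)/n_k < \tfrac{\sqrt{17}+9}{8} + 2\varepsilon_k$, hence $\rep(x) \le \tfrac{\sqrt{17}+9}{8} < r_3$, again impossible. This settles the first assertion.

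For the second assertion I would argue by contradiction, assuming $a_k a_{k+1} a_{k+2} a_{k+3} a_{k+4} = 21111$ for some large $k$. By the first part, case $[2]_j$ holds for $j = k, \dots, k+4$, so Lemma~\ref{buglem:7.2} gives $W_{j+1} = W_j M_{j-1}$, i.e. $|W_{j+1}| = |W_j| + q_{j-1}$, which in the notation $t_j = |W_j|/q_j$, $\eta_j = q_{j-1}/q_j$ becomes $t_{j+1} = \eta_{j+1}(t_j + \eta_j)$; and since $a_{j+1} = 1$ for $j = k, \dots, k+3$ one has $\eta_{j+1} = 1/(1+\eta_j)$, so $t_{j+1} = (t_j + \eta_j)/(1+\eta_j)$ for those $j$. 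Next I would feed $\rep(x) > r_3$ into Lemma~\ref{buglem:7.6}: since the two estimates there hold for values of $n$ tending to infinity with $j$, one gets $\rep(x) \le \liminf_j(\text{RHS})$, and as $\varepsilon_j \to 0$ this forces, for all large $j$,
\begin{gather*}
(r_3 - 1) - (2 - r_3)\eta_j \;<\; t_j \;<\; \frac{(1 + \eta_j)(2 - r_3)}{r_3 - 1},
\end{gather*}
the left inequality coming from part~(2) and the right one from part~(1). Writing $u = 2 - r_3$, $v = r_3 - 1$ (so $u + v = 1$), the left bound at $j = k$ together with $1 - u = v$ gives the clean estimate $t_{k+1} > (v - u\eta_k + \eta_k)/(1+\eta_k) = v$. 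Since $a_k = 2$ forces $\eta_k = 1/(2 + \eta_{k-1}) \in (1/3,1/2)$, one then obtains successively $\eta_{k+1} \in (2/3,3/4)$, $\eta_{k+2} \in (4/7,3/5)$, $\eta_{k+3} \in (5/8,7/11)$, $\eta_{k+4} \in (11/18,8/13)$, and iterating the map $t \mapsto (t+\eta)/(1+\eta)$ — increasing in $t$, and increasing in $\eta$ as long as $t < 1$, which holds here — three more times with these $\eta$-ranges yields $t_{k+4} > 0.916$. On the other hand the right bound at $j = k+4$ gives $t_{k+4} < (1+\eta_{k+4})u/v < 0.892$, the desired contradiction.

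The only genuinely delicate point, and the place where the precise size of $r_3$ enters, is this last numerical step: one must check that the crude $\eta$-intervals above — which use nothing about the slope beyond $a_k = 2$ — already force the lower bound $t_{k+4} > 0.916\cdots$ to strictly exceed the upper bound $(1+\eta_{k+4})u/v < 0.892\cdots$. I expect this verification, together with keeping straight which half of Lemma~\ref{buglem:7.6} produces the lower bound and which the upper bound on $t_j$, to be the main obstacle; the rest is routine bookkeeping and limit arguments.
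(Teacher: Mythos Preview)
Your proof is correct and follows essentially the same approach as the paper: both arguments first obtain $[2]_k$ and $a_k\le 2$ via Lemmas~\ref{buglem:7.4}--\ref{buglem:7.5}, and then contradict a hypothetical block $21111$ by pitting the lower bound on $t$ from Lemma~\ref{buglem:7.6}(2) at an early index against the upper bound from Lemma~\ref{buglem:7.6}(1) at index $k+4$, propagated through $|W_{j+1}|=|W_j|+q_{j-1}$. The only difference is computational: the paper expresses $t_{k+4},\eta_{k+4}$ as explicit rational functions of $t_{k-1},\eta_{k-1}$ and optimizes over $\eta_{k-1}\ge 0$, while you iterate $t_{j+1}=(t_j+\eta_j)/(1+\eta_j)$ with interval estimates on the $\eta_j$ (exploiting the neat identity $t_{k+1}>v$); both routes reach the same numerical contradiction.
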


\begin{proof}
Since $\mathrm{rep}(x) > r_3 > \varphi$,  $x$ satisfies the case $[2]_k$ for all sufficiently large $k$ by Lemma \ref{buglem:7.4}. Then, by Lemma \ref{buglem:7.5}, $a_k$ must be equal to $1$ or $2$.   Furthermore, by Lemma \ref{buglem:7.2},  
\begin{gather*}
|W_{k+j}| = |W_{k+j-1}M_{k+j-2}| = |W_{k+j-1}| + q_{k+j-2}
\end{gather*}
is satisfied for all sufficiently large $k$ and all $j \ge 0$. Then $t_{k+j}$ is represented as 
\begin{gather}\label{eq:t}
t_{k+j} = \frac{|W_{k+j}|}{q_{k+j}} = \frac{|W_{k+j-1}| + q_{k+j-2}}{q_{k+j}} = \eta_{k+j} (t_{k+j-1}  + \eta_{k+j-1}) .
\end{gather}

Suppose that $a_ka_{k+1}a_{k+2}a_{k+3}a_{k+4} = 21111$ appears for some sufficiently large $k$. 
Then the recurrence relation of $q_n$ gives 
\begin{gather*}
 q_{k+3}=8q_{k-1}+3q_{k-2}, \qquad q_{k+4}=13q_{k-1}+5q_{k-2}, 
\end{gather*}
and hence 
\begin{gather*}
	\eta_{k+4}=\dfrac{q_{k+3}}{q_{k+4}}=\dfrac{8+3\eta_{k-1}}{13+5\eta_{k-1}} .
\end{gather*}
By (\ref{eq:t}), we have
\begin{gather*}
t_{k+4}= \dfrac{t_{k-1}+11+5\eta_{k-1}}{13+5\eta_{k-1}} . 
\end{gather*}
From Lemma \ref{buglem:7.6}(2) and $\rep(x)>r_3$, 
it follows 
\begin{gather*} 
	1+\dfrac{t_{k-1}+\eta_{k-1}}{1+\eta_{k-1}} > \dfrac{2(1869+2\varphi)}{2277} ,
\end{gather*}
and hence
\begin{gather}
	t_{k-1} > \dfrac{1461+4\varphi}{2277}(1+\eta_{k-1})-\eta_{k-1} .  \label{eq:matheq2}
\end{gather}
By using Lemma \ref{buglem:7.6}(1), (\ref{eq:matheq2}) and $\eta_{k-1} \geq 0$, we obtain
\begin{align*}
	\dfrac{r(|W_{k+4}|+q_{k+4}+q_{k+3}-2, x)}{|W_{k+4}|+q_{k+4}+q_{k+3}-2}
		&<1+\dfrac{1+\eta_{k+4}}{t_{k+4}+1+\eta_{k+4}}+\varepsilon_k \\
		&=1+\dfrac{21+8\eta_{k-1}}{t_{k-1}+32+13\eta_{k-1}}+\varepsilon_k \\
		&<1+\dfrac{21+8\eta_{k-1}}
			{\frac{1461+4\varphi}{2277}(1+\eta_{k-1})-\eta_{k-1}+32+13\eta_{k-1}}+\varepsilon_k \\
		&\leq 1+\dfrac{47817}{74325+4\varphi}+\varepsilon_k =1.64329\cdots +\varepsilon_k .
\end{align*}
Since $k$ is sufficiently large, this contradicts to $\rep(x)> r_3$.
\end{proof}

\begin{lem} \label{lem:1645}
	Let $x$ be a Sturmian word and 
	$\theta = [0, a_1, a_2, \cdots]$ be the slope of $x$. 
	If $\rep(x)>1.645$ and $k$ is sufficiently large, 
	then $a_k \in \{1,2\}$ and the following sequences can not appear  in $\{a_n\}_n$. 
	\begin{enumerate}
		\item $a_{k+j}= 1$ for all $j \geq 0$. 
		\item $a_ka_{k+1}=22$.  
		\item $a_ka_{k+1}a_{k+2}a_{k+3} a_{k+4}a_{k+5}=121212$, and then
			$a_ka_{k+1}a_{k+2}a_{k+3}a_{k+4}a_{k+5}=212121$. 
		\item $a_k a_{k+1} a_{k+2} a_{k+3} a_{k+4} a_{k+5} a_{k+6}=1211212$, and then
						$a_k a_{k+1} a_{k+2} a_{k+3} a_{k+4} a_{k+5}=211212$ . 
	\end{enumerate}
\end{lem}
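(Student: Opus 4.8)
\emph{Set-up.} I would first note that, since $\rep(x)>1.645>\varphi$, Lemma \ref{buglem:7.4} rules out the cases $[1]_k$ and $[3]_k$ for all large $k$, so $[2]_k$ holds for every large $k$, and then Lemma \ref{buglem:7.5} forces $a_k\in\{1,2\}$. In case $[2]_k$ one has $W_{k+1}=W_kM_{k-1}$, so $|W_{k+j}|=|W_{k+j-1}|+q_{k+j-2}$ for $j\ge 0$ (the identity behind (\ref{eq:t})). I would use throughout the trivial bounds $0<\eta_m<1$ and $0<t_m\le 1$ (the latter because $W_m$ is a suffix of $M_m$ or of $M_{m-1}$), the fact $\varepsilon_m\to 0$, and that $\rep(x)>1.645$ gives $r(n,x)/n>1.645$ for all large $n$; with Lemma \ref{buglem:7.6}(1) applied at $m$, this last fact upgrades to $t_m<\tfrac{71}{129}(1+\eta_m)+o(1)$.

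\emph{The engine.} Suppose a block $b_0b_1\cdots b_{\ell-1}$ with all $b_i\in\{1,2\}$ occurs as $a_k\cdots a_{k+\ell-1}$ for some large $k$. Iterating $q_n=a_nq_{n-1}+q_{n-2}$ writes $q_{k+\ell-1}$ and $q_{k+\ell-2}$ as fixed non-negative integer combinations of $q_{k-1},q_{k-2}$, and the $|W|$-recurrence gives $|W_{k+\ell-1}|=|W_{k-1}|+\sum_{i=k-2}^{k+\ell-3}q_i$, again $|W_{k-1}|$ plus a fixed integer combination of $q_{k-1},q_{k-2}$; hence $(t_{k+\ell-1}+\eta_{k+\ell-1})/(1+\eta_{k+\ell-1})$ becomes an explicit rational expression in $\eta_{k-1}$ carrying a term proportional to $t_{k-1}$. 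Lemma \ref{buglem:7.6}(2) at index $k+\ell-1$ together with $\rep(x)>1.645$ forces this expression to be $>0.645-\varepsilon_{k+\ell-1}$, so it would suffice to bound it above by a constant $C<0.645$, using $0<\eta_{k-1}<1$ and $t_{k-1}\le 1$ (or the sharper bound on $t_{k-1}$ when that is too weak), to reach a contradiction for $k$ large.

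\emph{The four cases.} For $b_0b_1=22$ one computes $\dfrac{t_{k+1}+\eta_{k+1}}{1+\eta_{k+1}}=\dfrac{|W_{k-1}|+3q_{k-1}+2q_{k-2}}{7q_{k-1}+3q_{k-2}}\le\dfrac{4q_{k-1}+2q_{k-2}}{7q_{k-1}+3q_{k-2}}\le\tfrac{3}{5}$; for $b_0\cdots b_5=121212$, $\dfrac{t_{k+5}+\eta_{k+5}}{1+\eta_{k+5}}=\dfrac{|W_{k-1}|+35q_{k-1}+26q_{k-2}}{56q_{k-1}+41q_{k-2}}\le\dfrac{36q_{k-1}+26q_{k-2}}{56q_{k-1}+41q_{k-2}}\le\tfrac{9}{14}$; and $\tfrac{3}{5},\tfrac{9}{14}<0.645$, which settles both. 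For $b_0\cdots b_6=1211212$ the analogous expression at $k+6$ is $\dfrac{|W_{k-1}|+59q_{k-1}+43q_{k-2}}{93q_{k-1}+67q_{k-2}}$, and here $t_{k-1}\le1$ only yields the bound $\tfrac{20}{31}$, just above $0.645$; substituting instead $|W_{k-1}|<\tfrac{71}{129}(q_{k-1}+q_{k-2})+o(q_{k-1})$ lowers it to $\tfrac{665}{1032}+o(1)<0.645$. Part (1) needs a separate argument: if $a_{k+j}=1$ for all $j\ge0$, then $q_{k+j}-|W_{k+j}|=q_{k+j-1}-|W_{k+j-1}|$, so $q_m-|W_m|$ is eventually constant, whence $t_m\to1$ and $\eta_m\to1/\varphi$; Lemma \ref{buglem:7.6}(1) then gives, along some sequence $n\to\infty$, $r(n,x)/n<1+\dfrac{1+\eta_m}{t_m+1+\eta_m}+\varepsilon_m\to\varphi$, so $\rep(x)\le\varphi<1.645$, a contradiction. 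Finally, the ``and then'' clauses follow by prepending a digit: if $212121$ (resp.\ $211212$) occurred at a large $k$, then $a_{k-1}\in\{1,2\}$, and $a_{k-1}=1$ reproduces the forbidden block $121212$ (resp.\ $1211212$) at position $k-1$, while $a_{k-1}=2$ produces $22$, already excluded.

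\emph{Main difficulty.} The real work lies in the bookkeeping of the integer coefficients attached to each block and in checking $C<0.645$; the margin is comfortable for $22$ and $121212$, but razor-thin for $1211212$, where only the simultaneous use of both parts of Lemma \ref{buglem:7.6} pushes $C$ below $0.645$ (by roughly $6\times10^{-4}$), so the plain estimate $t_{k-1}\le1$ is genuinely insufficient there. One must also take ``$k$ sufficiently large'' literally at each step, so that all the $\varepsilon_m$-terms and the small slack coming from $\rep(x)>1.645$ (rather than $\ge$) in Lemma \ref{buglem:7.6} get absorbed.
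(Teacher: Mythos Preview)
Your proof is correct and follows essentially the same approach as the paper: case $[2]_k$ via Lemma~\ref{buglem:7.4}, the $a_k\in\{1,2\}$ restriction via Lemma~\ref{buglem:7.5}, then the block-by-block computation of $(t_{k+\ell-1}+\eta_{k+\ell-1})/(1+\eta_{k+\ell-1})$ combined with Lemma~\ref{buglem:7.6}, and the prepending argument for the ``and then'' clauses. One small difference: for the block $121212$ the paper uses the sharper bound $t_{k-1}<\tfrac{71}{129}(1+\eta_{k-1})$ from Lemma~\ref{buglem:7.6}(1) to obtain $1.6402\ldots$, whereas you observe that the cruder $t_{k-1}\le 1$ already gives $1+\tfrac{9}{14}=1.6428\ldots<1.645$, which is a slight simplification.
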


\begin{proof}
By Lemma \ref{lem:21111}, we have $a_k \in \{1,2\}$.   
Both (1) and (2) follow from the same argument as \cite[below Proof of Lemma 7.6]{bugeaud}. 

(3)\; Suppose that $a_ka_{k+1}a_{k+2}a_{k+3} a_{k+4}a_{k+5}=121212$ appears for some sufficiently large $k$.
	Then the recurrence relation of $q_n$ gives  
	\begin{gather*} 
	\eta_{k+5}=\dfrac{q_{k+4}}{q_{k+5}}= \dfrac{15q_{k-1}+11q_{k-2}}{41q_{k-1}+30q_{k-2}} = \dfrac{15+11\eta_{k-1}}{41+30\eta_{k-1}},
	\end{gather*} 
	and by   (\ref{eq:t}) 
	\begin{gather*}
		t_{k+5}=\dfrac{|W_{k+5}|}{q_{k+5}}
			=\dfrac{t_{k-1}+20+15\eta_{k-1}}{41+30\eta_{k-1}} . 
	\end{gather*}
	From Lemma \ref{buglem:7.6}(1) and $\rep(x)>1.645$, 
	it follows 
	\begin{gather*}
		1+\dfrac{1+\eta_{k-1}}{t_{k-1}+1+\eta_{k-1}} > 1.645 ,
	\end{gather*}
	and hence
	\begin{gather}
		t_{k-1} < \dfrac{71}{129}(1+\eta_{k-1}) . \label{eq:matheq1}
	\end{gather}
	By using Lemma \ref{buglem:7.6}(2), (\ref{eq:matheq1}) and $\eta_{k-1} \leq 1$, we obtain 
	\begin{align*}
		\dfrac{r(q_{k+5}+q_{k+4}-2, x)}{q_{k+5}+q_{k+4}-2}
			&<1+\dfrac{t_{k+5}+\eta_{k+5}}{1+\eta_{k+5}}+\varepsilon_k \\
			&=1+\dfrac{t_{k-1}+35+26\eta_{k-1}}{56+41\eta_{k-1}}+\varepsilon_k \\
			&<1+\dfrac{\frac{71}{129}(1+\eta_{k-1})+35+26\eta_{k-1}}{56+41\eta_{k-1}}+\varepsilon_k \\
			&= \dfrac{11810+8714\eta_{k-1}}{7224+5289\eta_{k-1}}+\varepsilon_k \\
			&\leq \dfrac{11810 + 8714}{7224 + 5289}+\varepsilon_k =1.6402\cdots+\varepsilon_k  . 
	\end{align*}
	Since $k$ is sufficiently large, this contradicts to $\rep(x)>1.645$. If $a_ka_{k+1}a_{k+2}a_{k+3} a_{k+4}a_{k+5}=212121$ appears, then   $a_{k-1}a_ka_{k+1}a_{k+2}a_{k+3} a_{k+4}a_{k+5}$ must be equal to $1212121$ by (2), but $a_{k-1}a_ka_{k+1}a_{k+2}a_{k+3} a_{k+4} = 121212$ is impossible for sufficiently large $k$. 

(4)\; Suppose that $a_k a_{k+1} a_{k+2} a_{k+3} a_{k+4} a_{k+5} a_{k+6}=1211212$ appears  for some sufficiently large $k$.
	Then, by the same argument as (3), we have 
	\begin{gather*}
	q_{k+5}=25q_{k-1}+18q_{k-2}, \qquad q_{k+6}=68q_{k-1}+49q_{k-2}, \\
	\eta_{k+6}=\dfrac{q_{k+5}}{q_{k+6}}=\dfrac{25+18\eta_{k-1}}{68+49\eta_{k-1}} ,\quad
	t_{k+6}=\dfrac{|W_{k+6}|}{q_{k+6}}
			=\dfrac{t_{k-1}+34+25\eta_{k-1}}{68+49\eta_{k-1}} ,
	\end{gather*}
and hence	
	\begin{gather*}
		\dfrac{r(q_{k+6}+q_{k+5}-2, x)}{q_{k+6}+q_{k+5}-2}
			<1+\dfrac{t_{k+6}+\eta_{k+6}}{1+\eta_{k+6}}+\varepsilon_k 
			< \dfrac{1697}{1032}+\varepsilon_k 
			=1.6443\cdots+\varepsilon_k 
			<1.645.
	\end{gather*}
	This leads us to a contradiction.  Combining this with (2), the sequence $a_k a_{k+1} a_{k+2} a_{k+3} a_{k+4} a_{k+5}=211212$ is not also admitted. 
\end{proof}

\begin{lem} \label{lem:r2}
	Let $x$ be a Sturmian word such that 
	the slope $\theta$ of $x$ is equal to 
	\begin{gather*}
		[0, a_1, a_2, \cdots, a_K, \overline{2,1,1,2,1,1,1}]
	\end{gather*}
	for some $K$. 
	Assume that the case $[2]_k$ holds for all  $k \geq K$. Then $x$ satisfies 
	\begin{gather*}
		\rep(x) = r_2 = \dfrac{415\sqrt{149}-2693}{1438} = 1.65001\cdots. 
	\end{gather*}
\end{lem}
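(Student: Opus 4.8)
The plan is to compute $\rep(x) = \liminf_n r(n,x)/n$ directly, exploiting the fact that under the hypothesis the case $[2]_k$ holds for all $k \geq K$, so that the recurrence $|W_{k+1}| = |W_k| + q_{k-1}$ holds and (\ref{eq:t}) is available for all large $k$. First I would set up the periodic dynamics. Writing the period $2,1,1,2,1,1,1$ (length $7$), the matrix of the $q_n$-recurrence over one full period is a fixed $2\times2$ integer matrix; its dominant eigenvalue $\lambda$ is a quadratic unit, and one checks $\lambda$ is the relevant quadratic irrational so that $\eta_k = q_{k-1}/q_k$ converges, through the period, to a $7$-cycle of explicit values $\eta^{(0)},\dots,\eta^{(6)}$, each a quadratic irrational in $\sqrt{149}$ (the discriminant showing up in $r_2$ strongly suggests the characteristic polynomial has discriminant a multiple of $149$). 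Similarly, from (\ref{eq:t}), $t_k = \eta_k(t_{k-1} + \eta_{k-1})$ is an affine contraction over the period, so $t_k \to$ an explicit $7$-cycle $t^{(0)},\dots,t^{(6)}$, again quadratic in $\sqrt{149}$. These limiting values are obtained by solving the fixed-point equations of the period-$7$ composed maps; this is the computational heart but is routine.

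Next I would identify, within each block of the period, which values of $n$ produce the local minima of $r(n,x)/n$. Since $n \mapsto r(n,x)/n$ is piecewise decreasing with jumps up exactly where $r(n,x) = 2n+1$ (the discussion before Lemma 1.4), the candidate minimizers are the right endpoints of the decreasing runs, and by Lemma 1.6 the natural candidates are $n = |W_k| + q_k + q_{k-1} - 2$ and $n = q_k + q_{k-1} - 2$; Lemma 1.6 gives the two upper bounds $1 + \frac{1+\eta_k}{t_k+1+\eta_k} + \varepsilon_k$ and $1 + \frac{t_k+\eta_k}{1+\eta_k} + \varepsilon_k$ at these points. In fact I expect these bounds to be asymptotically sharp here (this is the content of the matching lower-bound part of \cite[Lemma 7.6]{bugeaud} / the arguments in \cite[\S7]{bugeaud}), so that $\liminf_n r(n,x)/n$ equals the minimum over the $7$ residues in the period of the smaller of the two limiting expressions
\[
1 + \frac{1+\eta^{(i)}}{t^{(i)}+1+\eta^{(i)}}, \qquad 1 + \frac{t^{(i)}+\eta^{(i)}}{1+\eta^{(i)}},
\]
with $\varepsilon_k \to 0$ disappearing in the limit. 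I would then plug in the explicit $7$-cycle values, evaluate all $14$ numbers, and check that the minimum is exactly $r_2 = (415\sqrt{149} - 2693)/1438$, attained at one specific position in the period (presumably the one just after the isolated extra $1$, matching the heuristic that the block $2,1,1,1$ is where the "excess" repetition occurs).

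The main obstacle is the lower bound: Lemma 1.6 only gives upper bounds for $r(n,x)/n$ at the two special lengths, so to conclude $\rep(x) = r_2$ rather than merely $\rep(x) \leq r_2$ I must show no other $n$ does better. For the upper bound $\rep(x) \leq r_2$ it suffices to evaluate the Lemma 1.6 bounds along a single residue class and let $k\to\infty$. For the matching lower bound $\rep(x) \geq r_2$ I would argue as in \cite[\S7]{bugeaud}: in case $[2]_k$ the word $x$ has the specific prefix structure $W_k M_{k-1} M_k \widetilde{M}_k\cdots$, which pins down $r(n,x)$ on the whole range $q_k - 2 \le n \le |W_{k+1}| + q_{k+1} + q_k - 2$ up to the $\pm 2$ coming from $\widetilde{M}$, yielding a piecewise-linear-in-$n$ formula for $r(n,x)$ whose minimum of $r(n,x)/n$ over that range is, up to $O(1/q_k)$, the minimum of finitely many of the above expressions; taking $k\to\infty$ and using that every sufficiently large $n$ lies in such a range gives $\rep(x)\geq r_2$. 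Combining the two inequalities yields $\rep(x) = r_2$.
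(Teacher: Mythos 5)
Your proposal is correct and follows essentially the same route as the paper: the upper bound comes from Lemma \ref{buglem:7.6} evaluated along one residue class modulo the period $7$, the matching lower bound comes from the piecewise-linear lower estimate on $r(n,x)$ taken from the proof of Theorem 3.4 in \cite{bugeaud} (minimized at the right endpoints of the two ranges), and both are evaluated via the limiting $7$-cycles of $\eta_k$ and $t_k$, which are quadratic irrationals in $\sqrt{149}$; the paper then checks, exactly as you propose, that the minimum of the resulting $14$ limiting values is $r_2$ (attained at $\rho_0$, with the other family bounded below by $\rho'_3 = 7(85-\sqrt{149})/305 > r_2$). The only content you leave implicit is the routine computation of these limits, which the paper carries out via the bounded-series recursion $s_k = \alpha_k + \beta_k s_{k-1}$.
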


\begin{proof}
By Lemma \ref{buglem:7.6}(1), the following inequality is satisfied for sufficiently large $k$: 
\begin{gather} \label{eq:r_7k+K}
	\dfrac{r(|W_{7k+K}|+q_{7k+K}+q_{7k-1+K}-2, x)}{|W_{7k+K}|+q_{7k+K}+q_{7k-1+K}-2}
			<1+\dfrac{1+\eta_{7k+K}}{t_{7k+K}+1+\eta_{7k+K}}+\varepsilon_{7k+K}. 
\end{gather}
Since 
\begin{gather*}
	\eta_k=\dfrac{q_{k-1}}{q_k}=[0, a_k, a_{k-1}, \cdots, a_1],
\end{gather*}
we can compute the limit of $\eta_{7k+i +K}$ as $k \to \infty$ for $i = 0, 1, \cdots, 6$. It is easy to check  
\begin{align*}
	\displaystyle \lim_{k \to \infty} \eta_{7k+K} & =[0, \overline{1,1,1,2,1,1,2}] = \dfrac{5\sqrt{149} - 37}{38}, \\		
	\displaystyle \lim_{k \to \infty} \eta_{7k+1+K} &=[0, \overline{2,1,1,1,2,1,1}] = \dfrac{5\sqrt{149} - 39}{58}, \\
	  & \qquad \qquad \vdots
\end{align*}
and so on. By (\ref{eq:t}), $t_k$ is represented as 
\begin{gather*}
	t_k=\eta_k \eta_{k-1}+\eta_k \eta_{k-1} \eta_{k-2}+\cdots+\eta_k \eta_{k-1} \cdots \eta_{K+1}\eta_K
			+\dfrac{|W_K|}{q_k}.
\end{gather*}
This allows us to compute the limit of $t_{7k+i+K}$ as $k \to \infty$  for $i = 0, 1, \cdots, 6$. 
We sketch how to compute $t_{7k+K}$ as an example. Let 
\begin{gather*}
s_k = \sum_{\ell = 0}^{7k-1} \prod_{j = \ell}^{7k} \eta_{j + K}, \\
\alpha_k = \sum_{\ell = 0}^{6} \prod_{j = \ell}^{7} \eta_{7k-7 + j + K}, \qquad \beta_k = \prod_{j=1}^7 \eta_{7k - 7 + j + K}.
\end{gather*}
Then the limits of $\eta_{7k+i+K}$ give explicit values of 
\begin{gather*}
 \alpha = \lim_{k \to \infty} \alpha_k =  \sum_{\ell = 0}^{6} \prod_{j = \ell}^{7} \lim_{k \to \infty} \eta_{7k-7 + j + K}, \qquad \beta = \lim_{k \to \infty} \beta_k = \prod_{j=1}^7 \lim_{k \to \infty} \eta_{7k - 7 + j + K}.
\end{gather*}
Since $\eta_k$ for $k >  K+1$ satisfies 
\begin{gather*}
 \eta_k = [0, a_k, a_{k-1}, \cdots, a_K, \cdots,  a_1] \le  \dfrac{1}{1 + \dfrac{1}{2 + 1 }}  = \dfrac{3}{4} ,
\end{gather*}
the sequence  $\{s_k\}_k$ is bounded. Then the relation $s_k = \alpha_k + \beta_k s_{k-1} $ implies 
\begin{gather*}
\frac{\alpha}{1- \beta} \le \liminf_{k \to \infty} s_k \le \limsup_{k \to \infty} s_k \le \frac{\alpha}{1- \beta}. 
\end{gather*} 
This gives 
\begin{gather*}
\lim_{k \to \infty} t_{7k+K} = \lim_{k \to \infty} \left ( s_k + \frac{|W_K|}{q_k} \right ) = \lim_{k \to \infty} s_k =  \frac{\alpha}{1 - \beta}.
\end{gather*}
By this method, we obtain
\begin{gather*}
	\displaystyle \lim_{k \to \infty} t_{7k+K}=\dfrac{1568-45\sqrt{149}}{1159}, \quad
	\displaystyle \lim_{k \to \infty} t_{7k+1+K}=\dfrac{3313-105\sqrt{149}}{3538}, \quad \cdots 
\end{gather*}
and so on. Then (\ref{eq:r_7k+K}) implies
\begin{gather*}
	\rep(x) \leq \displaystyle \lim_{k \to \infty} \left( 1+\dfrac{1+\eta_{7k+K}}{t_{7k+K}+1+\eta_{7k+K}} \right) = r_2. 
\end{gather*}
Similarly as in \cite[Proof of Theorem 3.4]{bugeaud}, 
$r(n, x)$ satisfies the following inequality: 
\begin{align} \label{eq:rmin}
	r(n, x)
	\geq \begin{cases}
		n+q_k+q_{k-1} & (q_k+q_{k-1}-1 \leq n \leq |W_k|+q_k+q_{k-1}-2), \\
		n+|W_k|+q_k+q_{k-1} & (|W_k|+q_k+q_{k-1}-1 \leq n \leq q_{k+1}+q_k-2). 
	\end{cases}
\end{align}
We set 
\begin{align*}
\rho_j&=\displaystyle \liminf_{k \to \infty} 
		\dfrac{|W_{7k+j+K}|+2q_{7k+j+K}+2q_{7k+j+K-1}-2}{|W_{7k+j+K}|+q_{7k+j+K}+q_{7k+j+K-1}-2} \\
	&=1+\displaystyle \liminf_{k \to \infty} \dfrac{1+\eta_{7k+j+K}}{t_{7k+j+K}+1+\eta_{7k+j+K}}, \\
\rho'_j&=\displaystyle \liminf_{k \to \infty} 
		\dfrac{|W_{7k+j+K}|+q_{7k+j+K+1}+2q_{7k+j+K}+q_{7k+j+K-1}-2}{q_{7k+j+K+1}+q_{7k+j+K}-2} \\
	&=1+\displaystyle \liminf_{k \to \infty} \dfrac{t_{7k+j+1+K}+\eta_{7k+j+1+K}}{1+\eta_{7k+j+1+K}}. 
\end{align*}
for $j=0, 1, \cdots, 6$. 
Since $	\min(\rho_0, \cdots, \rho_6) = \rho_0=r_2$ as a consequence of computations, 
we obtain 
\begin{gather*}
	\displaystyle \liminf_{k \to \infty} \dfrac{|W_k|+2q_k+2q_{k-1}-2}{|W_k|+q_k+q_{k-1}-2}
		=r_2. 
\end{gather*}
Similarly, since 
\begin{gather*}
	\min(\rho'_0, \cdots, \rho'_6) = \rho'_3 = \dfrac{7(85-\sqrt{149})}{305}, 
\end{gather*}
we obtain
\begin{gather*}
	\displaystyle \liminf_{k \to \infty} \dfrac{|W_k|+q_{k+1}+2q_k+q_{k-1}-2}{q_{k+1}+q_{k-1}-2}
		= \rho'_3
		>r_2. 
\end{gather*}
Then (\ref{eq:rmin}) gives $\rep(x) \geq r_2$. 
This concludes $\rep(x)=r_2$. 
\end{proof}

\begin{lem} \label{lem:1650}
	Let $x$ be a Sturmian word and 
	$\theta = [0, a_1, a_2, \cdots]$ be  the slope of $x$. 
	If $\rep(x) \geq r_2$ and $k$ is sufficiently large, then $a_k \in \{1,2\}$ and the following sequences can not appear  in $\{a_n\}_n$. 
	\begin{enumerate}
		\item $a_k a_{k+1} a_{k+2} a_{k+3} a_{k+4} a_{k+5} a_{k+6} a_{k+7}=21112111$. 
		\item $a_ka_{k+1} \cdots  a_{k+11} a_{k+12} =2111212111212$. 
		\item $a_k a_{k+1} \cdots a_{k+8} a_{k+9}=2112111212$. 
		\item $a_k a_{k+1} a_{k+2} a_{k+3}=1212$. 
	\end{enumerate}
\end{lem}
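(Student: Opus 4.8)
The plan is to follow the pattern of Lemmas~\ref{lem:21111} and~\ref{lem:1645}: the assertion $a_k\in\{1,2\}$ for large $k$ is already contained in Lemma~\ref{lem:21111}, the blocks in $(1)$, $(2)$, $(3)$ will be excluded by the estimate method of those lemmas, and $(4)$ will then be deduced combinatorially from $(1)$--$(3)$ together with Lemmas~\ref{lem:21111} and~\ref{lem:1645}. As in the proof of Lemma~\ref{lem:21111}, since $\rep(x)\ge r_2>\varphi$, Lemma~\ref{buglem:7.4} forces the case $[2]_k$ for all large $k$, so by Lemma~\ref{buglem:7.2} the identity $|W_{k+j}|=|W_{k+j-1}|+q_{k+j-2}$ and the recurrence (\ref{eq:t}) hold for all large indices, and Lemma~\ref{buglem:7.6} is applicable at every large index.

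For $(1)$, $(2)$, $(3)$ --- of lengths $\ell=8,13,10$ respectively --- I would assume the block in question occurs beginning at a large index $k$ and argue as follows. Iterating $q_n=a_nq_{n-1}+q_{n-2}$ gives $q_{k+j}=C_jq_{k-1}+D_jq_{k-2}$ with explicit non-negative integers $C_j,D_j$ for $-1\le j\le\ell-1$, whence $\eta_{k+\ell-1}$ is an explicit linear fractional function of $\eta_{k-1}$; and since $|W_{k+\ell-1}|=|W_{k-1}|+q_{k-2}+q_{k-1}+\cdots+q_{k+\ell-3}$, the recurrence (\ref{eq:t}) yields
\begin{gather*}
 t_{k+\ell-1}=\frac{t_{k-1}+A+B\eta_{k-1}}{C_{\ell-1}+D_{\ell-1}\eta_{k-1}}
\end{gather*}
with explicit non-negative integers $A,B$. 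On the other side, $\rep(x)\ge r_2$ together with Lemma~\ref{buglem:7.6} at index $k-1$ will bound $t_{k-1}$, up to an error tending to $0$ as $k\to\infty$: Lemma~\ref{buglem:7.6}(2) gives $t_{k-1}\ge(r_2-1)(1+\eta_{k-1})-\eta_{k-1}$, and Lemma~\ref{buglem:7.6}(1) gives $t_{k-1}\le\frac{2-r_2}{r_2-1}(1+\eta_{k-1})$. For $(1)$, whose block ends in $1$'s, I would substitute the lower bound for $t_{k-1}$ into the estimate of Lemma~\ref{buglem:7.6}(1) at index $k+\ell-1$; for $(2)$ and $(3)$, whose blocks end in $2$, I would substitute the upper bound for $t_{k-1}$ into the estimate of Lemma~\ref{buglem:7.6}(2) at index $k+\ell-1$. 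Maximizing the resulting function of $\eta_{k-1}$ over $\eta_{k-1}\in[0,1]$ should give a constant strictly below $r_2$, and since $\varepsilon_k\to0$ this contradicts $\rep(x)\ge r_2$.

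For $(4)$, the plan is combinatorial. By $(1)$--$(3)$ and Lemmas~\ref{lem:21111} and~\ref{lem:1645}, the sequence $\{a_n\}$ for large $n$ takes values in $\{1,2\}$, avoids all of $22$, $21111$, $121212$, $1211212$, $21112111$, $2112111212$, $2111212111212$, and is not eventually equal to $1$. Hence, for large $n$, every $2$ is isolated, every maximal run of $1$'s has length at most $3$ (such a run being flanked by $2$'s, a longer one would contain $21111$), and two consecutive occurrences of $2$ are separated by $g$ ones with $g\in\{1,2,3\}$. Writing the successive separations as $g_1,g_2,\dots\in\{1,2,3\}$, one has: $g_ig_{i+1}=11$ would create the block $121212$; $g_ig_{i+1}=21$ would create $1211212$; $g_ig_{i+1}=33$ would create $21112111$; $g_ig_{i+1}g_{i+2}=231$ would create $2112111212$; $g_ig_{i+1}g_{i+2}g_{i+3}=3131$ would create $2111212111212$; all impossible. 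Then, assuming $g_i=1$ for some large $i$: since $11$ and $21$ are forbidden among the $g$'s, $g_{i-1}=3$; since $33$ is forbidden, $g_{i-2}\in\{1,2\}$; if $g_{i-2}=2$ the forbidden pattern $231$ appears, and if $g_{i-2}=1$ then (repeating the step) $g_{i-3}=3$ and the forbidden pattern $3131$ appears. Hence no $g_i$ equals $1$, i.e., $212$ --- equivalently, since $22$ is absent, $1212$ --- never occurs.

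I expect the main obstacle to be the quantitative verifications in $(1)$--$(3)$: the final estimates beat $r_2$ only by something of order $10^{-3}$, so the choice of which half of Lemma~\ref{buglem:7.6} to apply at which index, and exact bookkeeping of the coefficients $C_j,D_j,A,B$, will be essential; a cruder estimate will not close the gap. Once $(1)$--$(3)$ are established, $(4)$ is routine, and its only subtlety is recognizing that those three blocks are exactly the ``gap-sequence'' translations needed for the reduction to close up.
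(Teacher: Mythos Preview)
Your proposal is correct and, for parts $(1)$--$(3)$, follows the paper's argument exactly: compute $\eta_{k+\ell-1}$ and $t_{k+\ell-1}$ as linear fractional functions of $\eta_{k-1}$ and $t_{k-1}$, bound $t_{k-1}$ via one half of Lemma~\ref{buglem:7.6} at index $k-1$, feed this into the other half at index $k+\ell-1$, and optimize over $\eta_{k-1}\in[0,1]$; the paper's resulting constants are $1.64938\ldots$, $1.6482\ldots$, $1.6489\ldots$, each strictly below $r_2$, confirming your expectation about the narrow margins.

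For part $(4)$ your route differs in presentation from the paper's. The paper fixes the word $w=12111212$ (obtained by extending $1212$ two steps to the left) and then checks, case by case, that each of the seven possible further left-extensions $21w$, $211w$, $111w$, $212w$, $2112w$, $21112w$, $11112w$ contains one of the forbidden blocks. Your gap-sequence encoding $g_i\in\{1,2,3\}$ packages the same case analysis more structurally: the five forbidden gap-words $11$, $21$, $33$, $231$, $3131$ correspond precisely to the blocks $121212$, $1211212$, $21112111$, $2112111212$, $2111212111212$, and your three-step backward induction on the $g_i$ is equivalent to the paper's left-extension tree. Both arguments are correct; yours has the advantage of making transparent why exactly the three new blocks $(1)$--$(3)$ are the ones needed to close the reduction.
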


\begin{proof}
By Lemmas \ref{buglem:7.4} and \ref{lem:21111}, $a_k \in \{1,2\}$ and the case $[2]_k$ holds for all  sufficiently large $k$.   Since (1), (2) and (3) are proved by the same argument as in Proof of Lemmas \ref{lem:21111} and \ref{lem:1645}, we  explain only an outline. 

(1)\; Suppose  that  $a_k a_{k+1} \cdots a_{k+6} a_{k+7}=21112111$ appears  for some sufficiently large $k$.
	Then we have 
	\begin{gather*}
	q_{k+6}=50q_{k-1}+19q_{k-2}, \qquad q_{k+7}=79q_{k-1}+30q_{k-2},
    \end{gather*}	
	and hence 
	\begin{gather*}
		\eta_{k+7}=\dfrac{q_{k+6}}{q_{k+7}}=\dfrac{50+19\eta_{k-1}}{79+30\eta_{k-1}} ,\quad 
		t_{k+7}=\dfrac{|W_{k+7}|}{q_{k+7}}
			=\dfrac{t_{k-1}+69+27\eta_{k-1}}{79+30\eta_{k-1}} . 
	\end{gather*}
	These imply 
	\begin{align*}
		\dfrac{r(|W_{k+7}|+q_{k+7}+q_{k+6}-2, x)}{|W_{k+7}|+q_{k+7}+q_{k+6}-2}
			&<1+\dfrac{1+\eta_{k+7}}{t_{k+7}+1+\eta_{k+7}}+\varepsilon_k \\
			&< 1+\dfrac{185502}{280593+415\sqrt{149}}+\varepsilon_k  =1.64938\cdots +\varepsilon_k .
	\end{align*}
	This contradicts to $\rep(x) \geq r_2$. 

(2)\; Suppose  that $a_ka_{k+1} \cdots  a_{k+11} a_{k+12} =2111212111212$ appears for some sufficiently large $k$.
	Then we have 
	\begin{gather*}
	q_{k+11}=1072q_{k-1}+407q_{k-2}, \qquad q_{k+12}=2921q_{k-1}+1109q_{k-2},
	\end{gather*}	
	and hence 
	\begin{gather*}
		\eta_{k+12}=\dfrac{q_{k+11}}{q_{k+12}}=\dfrac{1072+407\eta_{k-1}}{2921+1109\eta_{k-1}} ,\quad
		t_{k+12}=\dfrac{|W_{k+12}|}{q_{k+12}}
			=\dfrac{t_{k-1}+1515+576\eta_{k-1}}{2921+1109\eta_{k-1}} . 
	\end{gather*}
	These imply 
	\begin{align*}
		\dfrac{r(q_{k+12}+q_{k+11}-2, x)}{q_{k+12}+q_{k+11}-2}
			&<1+\dfrac{t_{k+12}+\eta_{k+12}}{1+\eta_{k+12}}+\varepsilon_k \\
			&< \dfrac{27135284+415\sqrt{149}}{16466401}+\varepsilon_k =1.6482\cdots +\varepsilon_k . 
	\end{align*}
	This contradicts to $\rep(x) \geq r_2$. 

(3)\; Suppose that $a_k a_{k+1} \cdots a_{k+8} a_{k+9}=2112111212$ appears for some sufficiently large $k$.
	Then we have 
	\begin{gather*}
	q_{k+8}=178q_{k-1}+69q_{k-2}, \qquad q_{k+9}=485q_{k-1}+188q_{k-2},
    \end{gather*}	
	and hence 
	\begin{gather*}
		\eta_{k+9}=\dfrac{q_{k+8}}{q_{k+9}}=\dfrac{178+69\eta_{k-1}}{485+188\eta_{k-1}} ,\quad
		t_{k+9}=\dfrac{|W_{k+9}|}{q_{k+9}}
			=\dfrac{t_{k-1}+251+98\eta_{k-1}}{485+188\eta_{k-1}} . 
	\end{gather*}
	Therefore,  
	\begin{align*}
		\dfrac{r(q_{k+9}+q_{k+8}-2, x)}{q_{k+9}+q_{k+8}-2}
			&<1+\dfrac{t_{k+9}+\eta_{k+9}}{1+\eta_{k+9}}+\varepsilon_k \\
			&< \dfrac{4529477+415\sqrt{149}}{2749880}+\varepsilon_k 
			=1.6489\cdots +\varepsilon_k . 
	\end{align*}
This contradicts to $\rep(x) \geq r_2$. 

(4)\; Suppose that $a_{k}a_{k+1}a_{k+2}a_{k+3} = 1212$ appears for 
    some sufficiently large $k$. 
	We may assume that $2$ occurs at least twice before $a_{k}$ in $\theta$. 
	Since  three sequences $221212$, $121212$ and $211212$ can not appear  by Lemma \ref{lem:1645}, 
	the only possible case is $a_{k-2} \cdots a_{k+3} = 111212$.  
	Then $w = a_{k-4} \cdots a_{k+3} = 12111212$ must appear because that both $1111212$ and $22111212$ does not appear
	by Lemmas  \ref{lem:21111} and \ref{lem:1645}. 
	From Lemmas \ref{lem:21111}, \ref{lem:1645}, (1), (2) and (3), 
	any of $21w, 211w, 111w$ and $212w, 2112w, 21112w, 11112w$ can not appear  for sufficiently large $k$. 
	This implies that both $1w$ and $2w$ are not admitted in $\{a_n\}_n$ for sufficiently large $k$.  
	This is a contradiction. 
\end{proof}

\begin{prop}\label{prop:2111r_1}	
	Let $x$ be a Sturmian word and 
	$\theta = [0, a_1, a_2, \cdots]$ be the slope of $x$. 
	If
	\begin{gather*}
		\rep(x) > r_1 = \dfrac{48+\sqrt{10}}{31} = 1.65039\cdots
	\end{gather*}
	and $k$ is sufficiently large,
	then 
	the sequence $a_k a_{k+1} a_{k+2}a_{k+3}=2111$ can not appear in $\{a_n\}_n$. 
	In particular, 
	the slope $\theta$ is of the form $[0, a_1, a_2, \cdots, a_K, \overline{2, 1, 1}]$ for some $K$.
	 
\end{prop}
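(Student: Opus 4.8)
The plan is to combine the structural restrictions of Lemmas~\ref{lem:21111}, \ref{lem:1645} and \ref{lem:1650} with a sharp asymptotic computation of $\eta_k$ and $t_k$ around a block $2,1,1,1$ of partial quotients, and then to apply Lemma~\ref{buglem:7.6}(1) at exactly the right index so as to force $\rep(x)\le r_1$.

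First I would record the consequences of the hypothesis. Since $\rep(x)>r_1$ exceeds each of $\varphi$, $r_3$, $1.645$ and $r_2$, Lemma~\ref{lem:21111} gives $a_k\in\{1,2\}$ and the case $[2]_k$ (hence formula (\ref{eq:t})) for all large $k$, and all the prohibitions of Lemmas~\ref{lem:21111}, \ref{lem:1645} and \ref{lem:1650} are available; in particular, for all large $k$ none of $21111$, $22$, $1212$, $21112111$ occurs in $\{a_n\}_n$. Writing the tail of $\theta$ (which, $22$ being absent, is a concatenation of blocks $2\,1^{b}$) as $\cdots 2\,1^{b_1}\,2\,1^{b_2}\,2\,1^{b_3}\cdots$ with $b_i\ge 1$, these four facts say precisely that for all large $i$ one has $b_i\in\{2,3\}$ and never $b_i=b_{i+1}=3$, and that "$a_ka_{k+1}a_{k+2}a_{k+3}=2111$ for some large $k$" is equivalent to "$b_i=3$ for some large $i$". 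So it suffices to prove that $b_i=3$ for only finitely many $i$; the claimed shape of $\theta$ then follows at once.

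Assume, for a contradiction, that $b_j=3$ for infinitely many $j$, and fix such a large $j$, so that $b_{j-1}=b_{j+1}=2$; let $k$ be the start of block $j$, i.e. $a_ka_{k+1}a_{k+2}a_{k+3}=2111$. The partial quotients preceding $a_k$ form a concatenation of the words $2,1,1$ and $2,1,1,1$ with no two of the latter adjacent, so $\eta_{k-1}=[0,1,1,2,\dots]$ lies in a short explicit interval about $[0,\overline{1,1,2}]=\tfrac{\sqrt{10}-2}{2}$; moreover, solving the period-three recurrence for $t$ coming from (\ref{eq:t}) along $\overline{2,1,1}$ shows that the value of $t$ at the end of a $2,1,1$ block converges to $\tfrac{8-\sqrt{10}}{6}$, and inserting a $2,1,1,1$ block earlier only increases it, so $t_{k-1}\ge\tfrac{8-\sqrt{10}}{6}-o(1)$ along a suitable subsequence of such $j$. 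Pushing $\eta$ and $t$ through the four partial quotients $2,1,1,1$ by means of $q_m=a_mq_{m-1}+q_{m-2}$ and (\ref{eq:t}) then gives $\eta_{k+3}\to\tfrac{\sqrt{10}}{5}$ and $t_{k+3}\ge\tfrac{10+\sqrt{10}}{15}-o(1)$. Now Lemma~\ref{buglem:7.6}(1) at the index $k+3$ reads
\[
\frac{r(|W_{k+3}|+q_{k+3}+q_{k+2}-2,\ x)}{|W_{k+3}|+q_{k+3}+q_{k+2}-2}
<1+\frac{1+\eta_{k+3}}{t_{k+3}+1+\eta_{k+3}}+\varepsilon_{k+3},
\]
and a direct computation will show the right-hand side is at most $1+\tfrac{17+\sqrt{10}}{31}+\varepsilon_{k+3}=r_1+\varepsilon_{k+3}$, with equality in the limit. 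Letting $j\to\infty$ along the subsequence makes $\varepsilon_{k+3}\to 0$, whence $\rep(x)=\liminf_n r(n,x)/n\le r_1$, contradicting $\rep(x)>r_1$. Therefore $b_i=3$ only finitely often, the tail of $\theta$ is $\overline{2,1,1}$, and $\theta=[0,a_1,\dots,a_K,\overline{2,1,1}]$ for some $K$.

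The hard part is that this estimate is \emph{tight} — which is exactly why the constant is $r_1=\tfrac{48+\sqrt{10}}{31}$ — so a single block $2,1,1,1$ does not by itself give a contradiction: the two crude bounds on $t_{k-1}$ that $\rep(x)>r_1$ provides through Lemma~\ref{buglem:7.6} are far too weak to reach $t_{k-1}\approx\tfrac{8-\sqrt{10}}{6}$. The argument must exploit that there are infinitely many $3$-blocks, so that one may pass to a subsequence and take limits, and that the run of partial quotients before a $3$-block is forced to be essentially a pure $\overline{2,1,1}$ word, which pins $\eta_{k-1}$ and $t_{k-1}$ — hence $\eta_{k+3}$ and $t_{k+3}$ — tightly enough that the Lemma~\ref{buglem:7.6}(1) bound at $k+3$ evaluates to exactly $r_1$ in the limit. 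Carrying out the closed-form evaluation of these limiting $\eta$- and $t$-values (solving the periodic recurrences and propagating through $2,1,1,1$) and checking that the bound does not exceed $r_1$ over the whole admissible range of $\eta_{k-1}$ is the computational core; the case in which the $3$-blocks are separated only by short runs of $2$'s, where the bound instead falls strictly below $r_1$ much as in Lemmas~\ref{lem:r2} and \ref{lem:1650}, is handled separately.
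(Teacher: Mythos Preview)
Your overall target is right---applying Lemma~\ref{buglem:7.6}(1) at the last index of a $2111$ block and showing the bound collapses to $r_1$---but the route you take to get control of $t$ at that index is different from the paper's and contains a real gap.

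The paper does \emph{not} try to estimate $t_{k-1}$ from the structure of the partial quotients preceding $a_k$. Instead it looks at the pattern $a_k\cdots a_{k+3n+4}=2111(211)^n1$ between two \emph{consecutive} $2111$ blocks, uses Lemma~\ref{buglem:7.6}(2) at index $k-1$ together with the hypothesis $\rep(x)>r_1$ to obtain the weak bound $t_{k-1}>\tfrac{17+\sqrt{10}}{31}(1+\eta_{k-1})-\eta_{k-1}$, and then propagates this explicitly through $2111(211)^n1$ by solving the $q$-recurrence in closed form (eigenvalues $3\pm\sqrt{10}$). The resulting upper bound at index $k+3n+4$ is shown to be monotone in $\eta_{k-1}$ and in $n$, with limit exactly $r_1$ as $n\to\infty$; this gives the contradiction uniformly in $n\ge 2$ and in $k\ge k_0$.

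Your approach instead needs $t_{k-1}\ge\tfrac{8-\sqrt{10}}{6}-o(1)$, a bound roughly twice as strong as what Lemma~\ref{buglem:7.6}(2) supplies. You appeal to two mechanisms, neither of which is justified: (i) the claim that ``inserting a $2,1,1,1$ block earlier only increases $t$'' is asserted but not proved, and it is not at all clear---one would need a comparison of the iterated function system generated by the two block maps on $(t,\eta)$; (ii) the ``handled separately'' case of bounded gaps between $3$-blocks is simply deferred. That second case is exactly where your subsequence argument breaks down: if every $b_i=3$ is preceded by only one or two $2$-blocks, there is no long pure $\overline{2,1,1}$ prefix to pin $t_{k-1}$, and you would have to control an infinite family of (in general aperiodic) admissible patterns, not merely the single periodic one treated in Lemma~\ref{lem:r2}. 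The paper's self-referential use of $\rep(x)>r_1$ via Lemma~\ref{buglem:7.6}(2), followed by explicit propagation over the \emph{forward} stretch to the next $2111$, is precisely what eliminates this case distinction.
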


\begin{proof}
Since $\rep(x)>r_1$, 
there exists $\varepsilon>0$ such that $\rep(x)>r_1+\varepsilon$. We choose $k_0 > K$ satisfying  $2/q_{k_0} < \varepsilon$. 
By the assumption $\rep(x) >r_1$, $a_k \in \{1,2\}$ and the case $[2]_k$ holds for  all sufficiently large $k$.   
By Lemmas \ref{lem:21111}, \ref{lem:1645} and \ref{lem:1650}, 
 possible sequences  in $\{a_n\}_n$ are only 
\begin{gather*}
	a_{k} \cdots a_{k+3} = 2112 \qquad \text{and} \qquad a_k \cdots a_{k+4} = 21112
\end{gather*}
when $k$ is sufficiently large, 
and moreover the sequence 
\begin{gather*}
	a_{k} \cdots a_{k+7} = 21112111
\end{gather*}
can not appear. 
Therefore, it is sufficient to prove  that the sequence 
\begin{gather*}
	a_k \cdots a_{k+3n+4} = 2111(211)^n1
\end{gather*}
does not appear for every $n \geq 2$.  We prove this by contradiction. Fix a positive integer $n$ and suppose that the sequence 
\begin{gather*}
	a_k \cdots a_{k+3n+4} = 2111(211)^n1
\end{gather*}
appears for some sufficiently large $k \ge k_0$. 
For each positive integer $\ell$, we set $u_{\ell} = q_{k+3\ell}$ and $v_{\ell} = q_{k + 3\ell - 1}$. 
From the recurrence relation of $q_n$, it follows 
\begin{gather*}
	\begin{pmatrix}
		u_{\ell+1} \\
		v_{\ell+1}
	\end{pmatrix}
	= \begin{pmatrix}
		5 & 2 \\
		3 & 1
	\end{pmatrix}
	\begin{pmatrix}
		u_\ell \\ 
		v_\ell
	\end{pmatrix}, \qquad 
	\begin{pmatrix}
		u_1 \\
		v_1
	\end{pmatrix}
	= \begin{pmatrix}
		q_{k+3} \\
		q_{k+2}
	\end{pmatrix}
	= \begin{pmatrix}
		8q_{k-1} + 3q_{k-2} \\
		5q_{k-1} + 2q_{k-2}
	\end{pmatrix} . 
\end{gather*}
By solving this recurrence relation, $(u_\ell, v_\ell)$ is described as 
\begin{align*}
	u_\ell &= \dfrac{1}{2\sqrt{10}} \left( ((-26+8\sqrt{10})p^{\ell-1}+(26+8\sqrt{10})q^{\ell-1})q_{k-1} \right. \\
		&\qquad \left. +((-10+3\sqrt{10})p^{\ell-1}+(10+3\sqrt{10})q^{\ell-1})q_{k-2} \right) \\
	v_\ell &= \dfrac{1}{2\sqrt{10}} \left( ((-14+5\sqrt{10})p^{\ell-1}+(14+5\sqrt{10})q^{\ell-1})q_{k-1} \right. \\
		&\qquad \left. +((-5+2\sqrt{10})p^{\ell-1}+(5+2\sqrt{10})q^{\ell-1})q_{k-2} \right)
\end{align*}
where $p=3-\sqrt{10}$ and $q=3+\sqrt{10}$.
Hence we obtain explicit formulas of $q_{k+3n+3}=u_{n+1}$, $q_{k+3n+4}=u_{n+1}+v_{n+1}$ and $\eta_{k+3n+4} = q_{k+3n+3}/q_{k+3n+4}$. 
Moreover, since
\begin{gather*}
	q_{k+3\ell+1}+q_{k+3\ell+2}+q_{k+3\ell+3}
		=2u_\ell+v_\ell+v_{\ell+1}+u_{\ell+1}
		=10u_\ell+4v_\ell
\end{gather*}
and 
\begin{gather*}
q_{k-2}+q_{k-1}+q_k+q_{k+1}+q_{k+2}+q_{k+3} = 19q_{k-1}+8q_{k-2},
\end{gather*}
we have
\begin{align}
	\begin{aligned} \label{eq:sum}
	&q_{k-2}+q_{k-1}+q_k+q_{k+1}+\cdots+q_{k+3n+2} \\
		=&19q_{k-1}+8q_{k-2}+\displaystyle \sum^n_{\ell=1} (10u_\ell+4v_\ell) -u_{n+1} \\
		=&\dfrac{1}{30}\left \{ (-10 + (170-53\sqrt{10})p^n + (170 +53\sqrt{10}) q^n) q_{k-1}  \right . \\
			& \qquad \left . -5 \left (-4 + (-13 +4\sqrt{10})p^n -(13+4\sqrt{10})q^n )q_{k-2} \right ) \right \} .
			\end{aligned}
\end{align}
The assumption $\rep(x) > r_1$ and 
Lemma \ref{buglem:7.6}(2)
give
\begin{gather*}
	t_{k-1}>\dfrac{17+\sqrt{10}}{31}(1+\eta_{k-1})-\eta_{k-1} .
\end{gather*}
Since
\begin{gather*}
	t_{k+3n+4}=\dfrac{|W_{k-1}|+q_{k-2}+q_{k-1}+q_k+q_{k+1}+\cdots+q_{k+3n+2}}{q_{k+3n+4}} 
\end{gather*}
by  (\ref{eq:t}), 
Lemma \ref{buglem:7.6}(1) and (\ref{eq:sum}) lead us to
\begin{align*}
	\dfrac{r(|W_{k+3n+4}|+q_{k+3n+4}+q_{k+3n+3}-2, x)}{|W_{k+3n+4}|+q_{k+3n+4}+q_{k+3n+3}-2} 
		&<1+\dfrac{1+\eta_{k+3n+4}}{t_{k+3n+4}+1+\eta_{k+3n+4}}+\varepsilon_{k+3n+4} \\
		& < 1 + \dfrac{1+\eta_{k+3n+4}}{\dfrac{17+\sqrt{10}}{31}(1+\eta_{k+3n+4})+1}+\varepsilon_{k+3n+4} \\
		&= 1+\dfrac{A}{B} +\varepsilon_{k+3n+4},
\end{align*}
where $A$ and $B$ are given by 
\begin{align*}
	A&=93 \left \{ (-66+21\sqrt{10})p^n+(66+21\sqrt{10})q^n	+((-25+8\sqrt{10})p^n+(25+8\sqrt{10})q^n))\eta_{k-1} \right \} , \\
	B&=60+40\sqrt{10}+31(-304+97\sqrt{10})p^n+31(304+97\sqrt{10})q^n \\
			&\qquad +\left \{60+40\sqrt{10}+31(-115+37\sqrt{10})p^n+31(115+37\sqrt{10})q^n \right \} \eta_{k-1} .
\end{align*}
Since $A/B$ is monotonically decreasing with respect to $\eta_{k-1} \geq 0 $, 
we obtain 
\begin{gather*}
	\dfrac{A}{B}
	 \leq \dfrac{93((-66+21\sqrt{10})p^n+(66+21\sqrt{10})q^n)}
			{60+40\sqrt{10}+31(-304+97\sqrt{10})p^n+31(304+97\sqrt{10})q^n} .
		\end{gather*}
Since the right-hand side is monotonically increasing with respect to $n$ and 
\begin{gather*}
\displaystyle \lim_{n \to \infty} \left( 1+\dfrac{279((-22+7\sqrt{10})p^n+(22+7\sqrt{10})q^n)}
			{60+40\sqrt{10}+31(-304+97\sqrt{10})p^n+31(304+97\sqrt{10})q^n} \right)
	=r_1 , 
\end{gather*}
we conclude
\begin{gather*}
	\dfrac{r(|W_{k+3n+4}|+q_{k+3n+4}+q_{k+3n+3}-2, x)}{|W_{k+3n+4}|+q_{k+3n+4}+q_{k+3n+3}-2}
		<r_1+\varepsilon_{k+3n+4} . 
\end{gather*}
This contradicts to $r_1+\varepsilon < \rep(x)$ since $\varepsilon_{k+3n+4} = 2/q_{k+3n+4}  < 2/q_{k_0} <  \varepsilon$.
Therefore, for every $n$, the sequence $2111 (211)^n 1$ can not appear. 
This means that, if $\rep(x) > r_1$ and $k$ is sufficiently large, 
then
the sequence $a_ka_{k+1}a_{k+2}a_{k+3}=2111$ can not appear.  
Consequently,  $\theta$ is of the form 
$[0, a_1, a_2, \cdots, a_K, \overline{2, 1, 1}]$
for some integer $K$. 
\end{proof}

\begin{thm} \label{th:repmax2}
	There is no Sturmian word $x$ such that
	$r_1 < \rep(x) < r_{\max}$. 
	If a Sturmian word $x$ satisfies
	$r_2 \leq \rep(x) \leq r_1$, 
	then the continued fraction expansion of the slope of $x$ is of the form
	\begin{gather*}
		[0, a_1, a_2, \cdots, a_K, (2,1,1)^{n_1}, 1, (2,1,1)^{n_2}, 1, \cdots]
	\end{gather*}
	for some $K$ and some sequence $\{ n_i \}_i$ of integers greater than $1$. 
\end{thm}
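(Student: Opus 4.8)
The plan is to combine Proposition \ref{prop:2111r_1} with Lemmas \ref{lem:21111}, \ref{lem:1645} and \ref{lem:1650}, together with one additional explicit evaluation of $\rep(x)$ for a slope that is eventually periodic with period $2,1,1$. For the first assertion, suppose $\rep(x)>r_1$. Since $r_1>\varphi$, Lemma \ref{buglem:7.4} forces the case $[2]_k$ to hold for all sufficiently large $k$ (if instead $[1]_k$ or $[3]_k$ held for infinitely many $k$, then $r(n,x)/n<\varphi+2\varepsilon_k$ along a sequence $n\to\infty$, whence $\rep(x)\le\varphi$), and Proposition \ref{prop:2111r_1} gives that the slope of $x$ is $\theta=[0,a_1,\ldots,a_K,\overline{2,1,1}]$ for some $K$. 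I would then carry out the computation from the proof of Lemma \ref{lem:r2}, now over the period $2,1,1$ of length three: from $\eta_k=[0,a_k,a_{k-1},\ldots]$ one obtains the three limits $\lim_{k\to\infty}\eta_{3k+i+K}$, $i=0,1,2$, which depend only on the periodic tail; the series expression coming from (\ref{eq:t}) then gives $\lim_{k\to\infty}t_{3k+i+K}$; and Lemma \ref{buglem:7.6}, combined with the lower bound (\ref{eq:rmin}) used in the proof of Lemma \ref{lem:r2}, shows that $\rep(x)$ equals the minimum over $i$ of the three resulting expressions. That minimum works out to $r_{\max}$ (the eigenvalue $3+\sqrt{10}$ of the period matrix appearing in the proof of Proposition \ref{prop:2111r_1} is the source of the $\sqrt{10}$, and this recovers the value in Theorem \ref{bugthm:3.3}), contradicting $\rep(x)<r_{\max}$. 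Hence no Sturmian word satisfies $r_1<\rep(x)<r_{\max}$.

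For the second assertion, assume $r_2\le\rep(x)\le r_1$. Since $r_2>\varphi$, the case $[2]_k$ holds for large $k$, and by Lemmas \ref{lem:21111} and \ref{lem:1645} we have $a_k\in\{1,2\}$ eventually, with $a_ka_{k+1}=22$, an all-ones tail, and $a_ka_{k+1}a_{k+2}a_{k+3}=1212$ (Lemma \ref{lem:1650}(4)) all forbidden for large $k$. As $22$ is forbidden there are infinitely many $2$'s in $\{a_n\}_n$; the number of $1$'s between two consecutive $2$'s cannot be $1$ (a gap $a_{j+1}=1$ between $2$'s at positions $j$ and $j+2$ with $j$ large forces $a_{j-1}=1$, since $a_{j-1}\ne 2$, and then $a_{j-1}a_ja_{j+1}a_{j+2}=1212$), and cannot be $\ge 4$ (Lemma \ref{lem:21111}, $21111$ forbidden). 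So eventually $\{a_n\}_n$ is a concatenation of the two blocks $2,1,1$ and $2,1,1,1$, and by Lemma \ref{lem:1650}(1) ($21112111$ forbidden) no two blocks $2,1,1,1$ are adjacent. Moreover infinitely many blocks $2,1,1,1$ must occur, for otherwise the tail of $\theta$ is eventually $\overline{2,1,1}$ and the evaluation above gives $\rep(x)=r_{\max}>r_1$, a contradiction. Choosing $K$ so that $a_1\cdots a_K$ ends at the last letter of the first block $2,1,1,1$ in the tail, one gets $a_{K+1}a_{K+2}\cdots=(2,1,1)^{d_1}(2,1,1,1)(2,1,1)^{d_2}(2,1,1,1)\cdots$ with every $d_i\ge 1$, and since $(2,1,1)^{d}(2,1,1,1)=(2,1,1)^{d+1}(1)$ this equals $[0,a_1,\ldots,a_K,(2,1,1)^{n_1},1,(2,1,1)^{n_2},1,\ldots]$ with $n_i=d_i+1\ge 2$.

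The main obstacle is the evaluation used in the first part: one has to run the period-three analogue of the computation in the proof of Lemma \ref{lem:r2}, tracking simultaneously the upper bounds from Lemma \ref{buglem:7.6} and the matching lower bounds from (\ref{eq:rmin}) across all three residue classes modulo $3$, and check that the minimum is attained and equals exactly $r_{\max}$. Everything else is routine bookkeeping; the only delicate point in the second part is to cut the continued fraction immediately after the \emph{first} block $2,1,1,1$, which is precisely what forces $n_1\ge 2$ rather than merely $n_1\ge 1$.
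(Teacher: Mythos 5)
Your proposal is correct and follows essentially the same route as the paper: Proposition \ref{prop:2111r_1} reduces the first assertion to evaluating $\rep(x)$ for slopes with tail $\overline{2,1,1}$, and the second assertion is the block analysis based on Lemmas \ref{lem:21111}, \ref{lem:1645} and \ref{lem:1650}. The one substantive difference is how the evaluation $\rep(x)=r_{\max}$ is justified: the paper disposes of it by citing the proof of Theorem 3.4 in \cite{bugeaud}, whereas you propose to rerun the period-three analogue of the computation in Lemma \ref{lem:r2} --- exactly the step you flag as the main obstacle. That computation does come out as you claim: with the three $\eta$-limits $[0,\overline{2,1,1}]=(\sqrt{10}-2)/3$, $[0,\overline{1,1,2}]=(\sqrt{10}-2)/2$, $[0,\overline{1,2,1}]=(\sqrt{10}-1)/3$ and the corresponding $t$-limits from (\ref{eq:t}), the minimum over the three residue classes of the $\zeta$- and $\xi$-type quantities (upper bounds from Lemma \ref{buglem:7.6}, matching lower bounds from (\ref{eq:rmin})) is attained at the $\zeta$-term of the class $\eta=[0,\overline{1,1,2}]$ and equals $\sqrt{10}-3/2=r_{\max}$; so your sketch closes, though simply invoking \cite{bugeaud} is shorter and is what the paper does. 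In the second part your write-up is in fact more detailed than the paper's one-line proof: you make explicit that a pure $\overline{2,1,1}$ tail must be excluded because it would force $\rep(x)=r_{\max}>r_1$, and that choosing $K$ just after the \emph{first} block $2,1,1,1$ is what gives $n_i\ge 2$ --- both points the paper leaves implicit. One minor misattribution: the existence of infinitely many $2$'s follows from the forbidden all-ones tail (Lemma \ref{lem:1645}(1)), not from the forbidden block $22$; since you list both prohibitions, this is only a slip of wording.
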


\begin{proof}
Let $x$ be a Sturmian word of slope $\theta = [0,a_1, a_2, \cdots]$. 
By Proposition \ref{prop:2111r_1} and \cite[Proof of Theorem 3.4]{bugeaud}, 
$\rep(x)$ is equal to $r_{\max}$ when $\rep(x) > r_1$.  
If $\rep(x) \geq r_2$ and $k$ is sufficiently large, then only a sequence of the form 
$ a_k \cdots a_{k + 3n}= (211)^{n}1 $
for some $n \geq 2$ is admitted to $\theta$ by Lemmas \ref{lem:21111}, \ref{lem:1645} and \ref{lem:1650}. 
\end{proof}

\vskip 10mm
\section{Auxiliary lemmas of some continued fractions} \label{sec:continuedfraction}

For a positive integer $n$, we set
\begin{alignat*}{2}
	& e^{(n)}_0(m):= [0, \overline{(1,1,2)^m,1,(1,1,2)^{n-m+1}}] &\quad & (m=0, 1, 2, \cdots, n+1) \\
	& e^{(n)}_1(m):= [0, \overline{2,(1,1,2)^m,1,(1,1,2)^{n-m},1,1}] &\quad & (m=0, 1, 2, \cdots, n) \\
	& e^{(n)}_2(m):= [0, \overline{1,2,(1,1,2)^m,1,(1,1,2)^{n-m},1}] &\quad & (m=0, 1, 2, \cdots, n) . 
\end{alignat*}
These continued fractions will be used in \S \ref{sec:accumulate} and \S \ref{sec:largest}. 
In this section, we investigate the double sequence $\left\{ e^{(n)}_i(m) \right\}_{n, m}$ for $i=0, 1, 2$. 
Since 
\begin{gather*}
	e^{(n)}_1(m)=\dfrac{1}{2+e^{(n)}_0(m)}, \quad 
	e^{(n)}_2(m)=\dfrac{1}{1+e^{(n)}_1(m)}, \quad 
	e^{(n)}_0(m+1)=\dfrac{1}{1+e^{(n)}_2(m)} , 
\end{gather*}
we have  the following relations:
\begin{align}
	\begin{aligned} \label{eq:e-rec}
	& e^{(n)}_0(m+1)=\dfrac{1}{1+\dfrac{1}{1+\dfrac{1}{2+e^{(n)}_0(m)}}}=[0, 1, 1, 2+e^{(n)}_0(m)] , \\
	& e^{(n)}_1(m+1)=\dfrac{1}{2+\dfrac{1}{1+\dfrac{1}{1+e^{(n)}_1(m)}}}=[0, 2, 1, 1+e^{(n)}_1(m)] , \\
	& e^{(n)}_2(m+1)=\dfrac{1}{1+\dfrac{1}{2+\dfrac{1}{1+e^{(n)}_2(m)}}}=[0, 1, 2, 1+e^{(n)}_2(m)]  
	\end{aligned}
\end{align}
and 
\begin{gather}\label{eq:e-relation}
1 + e^{(n)}_2(m) + e^{(n)}_2(m)e^{(n)}_1(m) = 2. 
\end{gather}
Solving  (\ref{eq:e-rec}), 
$e^{(n)}_i(m)$ is represented as 
\begin{gather} \label{eq:e-gen}
	e^{(n)}_i(m) = \dfrac{a_i(m) e^{(n)}_i(0)+b_i(m)}{c_i(m) e^{(n)}_i(0)+d_i(m)} 
\end{gather}
for $i = 0,1,2$, 
where
\begin{align*}
	A_0(m) &= \begin{pmatrix}
				a_0(m) & b_0(m) \\
				c_0(m) & d_0(m) \\
			\end{pmatrix}
			=\begin{pmatrix}
				-2+\sqrt{10}+(2+\sqrt{10}) r^m & 3(1-r^m) \\
				2(1-r^m) & 2+\sqrt{10}+(-2+\sqrt{10}) r^m \\
			\end{pmatrix}, \\
	A_1(m) &= \begin{pmatrix}
				a_1(m) & b_1(m) \\
				c_1(m) & d_1(m) \\
			\end{pmatrix}
			=\begin{pmatrix}
				-2+\sqrt{10}+(2+\sqrt{10}) r^m & 2(1-r^m) \\
				3(1-r^m) & 2+\sqrt{10}+(-2+\sqrt{10}) r^m \\
			\end{pmatrix}, \\
	A_2(m) &= \begin{pmatrix}
				a_2(m) & b_2(m) \\
				c_2(m) & d_2(m) \\
			\end{pmatrix}
			=\begin{pmatrix}
				-1+\sqrt{10}+(1+\sqrt{10}) r^m & 3(1-r^m) \\
				3(1-r^m) & 1+\sqrt{10}+(-1+\sqrt{10}) r^m \\
			\end{pmatrix}, 
\end{align*}
and $r= p/q=(3-\sqrt{10})/(3+\sqrt{10})$. 
Furthermore, by (\ref{eq:e-gen}), we can define $e^{(n)}_0(m)$ for $m \ge n+2$ and $e^{(n)}_1(m)$, $e^{(n)}_2(m)$ for $m \ge n+1$. Since $r^m \to 0$ as $m \to \infty$, there exist the limits $ \lim_{m \to \infty} e^{(n)}_i(m) $ for $i = 0,1,2$.

\begin{lem} \label{lem:e-property}
	$(1)$ For all $n \geq 1$ and all $m \geq 0$, $e^{(n)}_i(m), i = 0,1,2,$ are estimated as 
			\begin{gather*}
				0 < e^{(n)}_0(m)<\frac{3}{5}, \qquad 
				0 <  e^{(n)}_1(m)< \frac{2}{5}, \qquad 
				0< e^{(n)}_2(m)< \frac{3}{4} . 
			\end{gather*}
	
	$(2)$ $e^{(n)}_0(n+1), e^{(n)}_0(0), e^{(n)}_1(0)$ and $e^{(n)}_2(0)$ converge as $n \to \infty$ and 
			\begin{align*}
				 \displaystyle \lim_{n \to \infty} e^{(n)}_0(n+1) & = [0, \overline{1,1,2}]
					=\dfrac{-2+\sqrt{10}}{2}, \\
				e_0:= \displaystyle \lim_{n \to \infty} e^{(n)}_0(0) & = [0, 1, \overline{1,1,2}]
					=\dfrac{\sqrt{2}}{\sqrt{5}}, \\
				e_1:= \displaystyle \lim_{n \to \infty} e^{(n)}_1(0) & = [0, 2, 1,\overline{1,1,2}]
					=\dfrac{10-\sqrt{10}}{18}, \\
				e_2:= \displaystyle \lim_{n \to \infty} e^{(n)}_2(0) &= [0, 1, 2, 1,\overline{1,1,2}]
					=\dfrac{28+\sqrt{10}}{43} .
			\end{align*}
	
	$(3)$ For every $m$, there exist the following limits: 
		 	\begin{gather*}
				e_0(m) :=\displaystyle \lim_{n \to \infty} e^{(n)}_0(m), \qquad 
				e_1(m) :=\displaystyle \lim_{n \to \infty} e^{(n)}_1(m), \qquad 
				e_2(m) :=\displaystyle \lim_{n \to \infty} e^{(n)}_2(m) . 
			\end{gather*}
			These convergents are uniform with respect to $m$. 
			
	$(4)$ $e_0(m), e_1(m)$ and $e_2(m)$ converge as $m \to \infty$ and  
			\begin{gather*}
				\displaystyle \lim_{m \to \infty} e_0(m)=-1+\dfrac{\sqrt{5}}{\sqrt{2}} , \quad
				\displaystyle \lim_{m \to \infty} e_1(m)=\dfrac{-2+\sqrt{10}}{3} , \quad
				\displaystyle \lim_{m \to \infty} e_2(m)=\dfrac{-1+\sqrt{10}}{3} . 
			\end{gather*}
\end{lem}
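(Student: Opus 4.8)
The whole lemma is a sequence of elementary but lengthy computations with the explicit matrix formula (\ref{eq:e-gen}), so the strategy is to set everything up once, extract the needed monotonicity/sign facts, and then let each part fall out of the single formula $e^{(n)}_i(m) = (a_i(m)e^{(n)}_i(0)+b_i(m))/(c_i(m)e^{(n)}_i(0)+d_i(m))$ together with the recurrences (\ref{eq:e-rec}). First I would record the sign information in the matrices $A_i(m)$: since $0<r<1$ we have $0<r^m\le 1$, so all four entries of each $A_i(m)$ are nonnegative (the off-diagonal entries $2(1-r^m),3(1-r^m)\ge 0$ vanish only at $m=0$, where the $A_i(0)$ are scalar matrices $\sqrt{10}\cdot I$ or $2\sqrt{10}\cdot I$ up to the diagonal shift — in any case positive), and the diagonal entries are clearly positive. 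Hence each $e^{(n)}_i(m)$ is a ratio of positive quantities, which already gives the lower bounds $0<e^{(n)}_i(m)$ in part (1).

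For the upper bounds in (1), I would argue by induction on $m$ using (\ref{eq:e-rec}). From $e^{(n)}_0(m+1)=[0,1,1,2+e^{(n)}_0(m)]=\dfrac{2+e^{(n)}_0(m)}{3+e^{(n)}_0(m)}$ one checks that $t\mapsto (2+t)/(3+t)$ maps $[0,1]$ into $[2/3,3/4]$, but more sharply, feeding in the bound $e^{(n)}_0(m)<3/5$ gives $e^{(n)}_0(m+1)<13/18<3/5$ — wait, that is false, so instead one uses the mutually reinforcing triple: if $e^{(n)}_1(m)<2/5$ then $e^{(n)}_0(m+1)=1/(1+1/(1+1/(2+e^{(n)}_0(m))))$ must be re-derived from the correct nesting. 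The clean way is to observe that the three maps in (\ref{eq:e-rec}) compose to a single Möbius map $e^{(n)}_0(m+1)=f(e^{(n)}_0(m))$ with $f$ increasing, locate its fixed point $e_0(\infty)=-1+\sqrt5/\sqrt2$, and check the base cases $e^{(n)}_0(0)=[0,\overline{(1,1,2)^{n+2}}]$ directly; the bounds $3/5,2/5,3/4$ are then just convenient rational over-estimates valid for all $n,m$, proved by the same induction. One must treat the extended range $m\ge n+2$ (resp. $m\ge n+1$) separately, using that (\ref{eq:e-gen}) still defines the quantities there and that the estimates persist since $r^m$ only shrinks.

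Parts (2), (3), (4) are then limits. For (2), as $n\to\infty$ the periodic blocks lengthen and $e^{(n)}_i(0)$ converges to the stated purely-periodic continued fractions; I would identify each limit by solving the fixed-point equation for the appropriate purely periodic continued fraction (e.g. $e_0$ satisfies $e_0=[0,1,\overline{1,1,2}]$, i.e. $e_0=1/(1+z)$ with $z=[0,\overline{1,1,2}]$ and $z$ a root of the quadratic coming from $z=[0,1,1,2+z]$), and verify the surd values by direct substitution. For (3), uniformity in $m$ follows because, by (\ref{eq:e-gen}), $e^{(n)}_i(m)-e_i(m)$ is controlled by $|e^{(n)}_i(0)-e_i(0)|$ times a factor bounded independently of $m$ (the Möbius coefficients $a_i(m),\dots,d_i(m)$ grow like $q^m$ in both numerator and denominator, so the ratio's sensitivity to the initial value stays bounded) — this is the one place needing a genuine estimate rather than a computation, and I expect it to be the main obstacle: one must show the derivative of the Möbius map $e\mapsto (a_i(m)e+b_i(m))/(c_i(m)e+d_i(m))$ with respect to $e$, namely $\det A_i(m)/(c_i(m)e+d_i(m))^2$, is bounded uniformly in $m$ on the relevant range of $e$, using $\det A_i(m)$ is constant (the product of eigenvalue powers $p^mq^m=1$... actually $\det A_i(m)$ one should just compute — it is $(p q)^{m}\cdot(\text{const})$ or similar) and that $c_i(m)e+d_i(m)$ is bounded below. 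Finally (4) follows by letting $m\to\infty$ in (\ref{eq:e-gen}): since $r^m\to 0$, $A_i(m)\to \sqrt{10}\begin{pmatrix}1&0\\0&1\end{pmatrix}$-type limits are \emph{not} what happens — rather $r^m\to0$ makes $e_i(m)$ tend to the fixed point of the limiting Möbius map, which I would compute as the attracting fixed point of $f$ above, obtaining $-1+\sqrt5/\sqrt2$, $(-2+\sqrt{10})/3$, $(-1+\sqrt{10})/3$ respectively; these are exactly the roots of the quadratics $e^2+2e-3/2=0$ type equations read off from (\ref{eq:e-rec}) with $e_i(m+1)=e_i(m)$, and one picks the root in the interval established in (1).
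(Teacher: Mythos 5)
The genuine gap is in part (1). The paper's proof is a one-step monotonicity argument: expanding the recurrence (\ref{eq:e-rec}) gives $e^{(n)}_0(m+1)=[0,1,1,2+e^{(n)}_0(m)]=\dfrac{3+e^{(n)}_0(m)}{5+2e^{(n)}_0(m)}$, and since $t\mapsto(3+t)/(5+2t)$ is strictly decreasing while $e^{(n)}_0(m)>0$, one gets $e^{(n)}_0(m+1)<[0,1,1,2]=3/5$ at once (likewise $[0,2,1,1+t]=\frac{2+t}{5+3t}<2/5$ and $[0,1,2,1+t]=\frac{3+2t}{4+3t}<3/4$); no induction, no fixed points, no base case. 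Your expansion $[0,1,1,2+t]=(2+t)/(3+t)$ is wrong (that is $[0,1,2+t]$), which is why your first attempt produced an impossible bound and you backtracked. The fallback you sketch is not a proof and contains errors: the one-step map is \emph{decreasing}, not increasing (its matrix $\bigl(\begin{smallmatrix}1&3\\2&5\end{smallmatrix}\bigr)$ has determinant $-1$, which is exactly why Lemma \ref{lem:e-decrease} obtains interlacing odd/even monotonicity rather than plain monotonicity); the base case is misidentified, since $e^{(n)}_0(0)=[0,\overline{1,(1,1,2)^{n+1}}]$, not $[0,\overline{(1,1,2)^{n+2}}]$; and, decisively, an induction anchored at $m=0$ cannot deliver the $3/5$ bound for $i=0$, because $e^{(n)}_0(0)\to\sqrt{2}/\sqrt{5}=0.6324\cdots>3/5$ by part (2) of the very lemma, so the base inequality you propose to ``check directly'' is not available (the positivity-based one-step argument only addresses indices $m\geq 1$ and needs no base case, which is why it is the right tool here). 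Finally, the specific constants $3/5,2/5,3/4$ are never actually derived in your plan; ``proved by the same induction'' is an assertion, not an argument.

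Parts (2)--(4) of your plan are essentially the paper's route and would go through once written out: (2) follows because the continued fractions involved share ever longer prefixes with the stated periodic ones (the paper quotes the standard estimate giving an error at most $2^{-(3n+4)}$); for (3) your derivative bound for the M\"obius map is precisely the paper's estimate, but you must actually compute $\det A_0(m)=40r^m$ (so it is bounded, not constant) and establish the lower bound $c_0(m)e^{(n)}_0(0)+d_0(m)>4$, which together give $|e_0(m)-e^{(n)}_0(m)|<40\,|e_0-e^{(n)}_0(0)|$ uniformly in $m$; for (4), letting $m\to\infty$ in (\ref{eq:e-gen}) produces a singular limit matrix, so $e_0(m)\to\frac{(-2+\sqrt{10})e_0+3}{2e_0+2+\sqrt{10}}=\frac{\sqrt{10}-2}{2}=-1+\frac{\sqrt{5}}{\sqrt{2}}$, which agrees with your attracting-fixed-point description. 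So the only genuine defect is part (1), but it is a real one: as written, your argument for the key inequalities does not go through.
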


\begin{proof}
(1)\; It is obvious that
	\begin{gather*}
		e^{(n)}_0(m+1)=[0, 1, 1, 2+e^{(n)}_0(m)]
			<[0, 1, 1, 2+0]
			=\dfrac{3}{5}.
	\end{gather*}
The remaining inequalities are proved by a similar fashion.  

(2)\; Since $e^{(n)}_0(n+1)=[0, \overline{(1,1,2)^{n+1},1}]$, 
	it is known by \cite[Lemma1.24(1)]{aigner} that   
	\begin{gather*}
		\left| e^{(n)}_0(n+1)-[0, \overline{1,1,2}] \right| \leq \dfrac{1}{2^{(3n+6)-2}}=\dfrac{1}{2^{3n+4}} . 
	\end{gather*}
		This gives $\displaystyle \lim_{n \to \infty} e^{(n)}_0(n+1)=[0, \overline{1,1,2}]=(-2+\sqrt{10})/2$. 
	Similarly,  $e_0, e_1$ and $e_2$ are computed. 

(3)\; We use the expression (\ref{eq:e-gen}).  
	The determinant of $A_0(m)$ is equal to $40 r^m$. 
	Since $|r^m|<1$ and $e^{(n)}_0(0)>0$, we have
	\begin{align*}
		c_0(m) e^{(n)}_0(0)+d_0(m)
			&=2 (1-r^m) e^{(n)}_0(0)+2+\sqrt{10}+(-2+\sqrt{10})r^m \\
			&>2 \cdot 0 +2+\sqrt{10}+(-2+\sqrt{10}) \cdot (-1) = 4 
	\end{align*}
	for all $m$, 
	and then 
	\begin{align*}
		\left| e_0(m) - e^{(n)}_0(m) \right|
			&=\left| \dfrac{\det{A_0(m)} \left( e_0-e^{(n)}_0(0) \right) }
				{ \left( c_0(m) e_0+d_0(m) \right) \left( c_0(m) e^{(n)}_0(0)+d_0(m) \right)} \right| \\
			&<\left|\det{A_0(m)} \left( e_0-e^{(n)}_0(0) \right) \right| \\
			&<40 \left| e_0-e^{(n)}_0(0) \right| . 
	\end{align*}
	This implies that $e^{(n)}_0(m)$ converges uniformly 
	with respect to $m$ as $n \to \infty$. 
	Similarly, 
	both $e^{(n)}_1(m)$ and $e^{(n)}_2(m)$ also converge uniformly with respect to $m$. 

(4)\;  It follows from (\ref{eq:e-gen}) that
	\begin{gather*}
		 \displaystyle \lim_{m \to \infty} e_0(m)
			=\dfrac{(-2+\sqrt{10})e_0+3}{2e_0+(2+\sqrt{10})}
			=-1+\dfrac{\sqrt{5}}{\sqrt{2}}.
	\end{gather*}
The limits of $e_1(m)$ and $e_2(m)$ are similarly computed. 
\end{proof}

\begin{lem} \label{lem:e-decrease}
	For all $n \geq 1$, 
	we have 
	\begin{align*}
		& e^{(n)}_1(0) < e^{(n)}_1(2) < e^{(n)}_1(4) < \cdots < \displaystyle \lim_{m \to \infty} e^{(n)}_1(m)
			< \cdots < e^{(n)}_1(5) < e^{(n)}_1(3) < e^{(n)}_1(1) \\
		<& e^{(n)}_0(1) < e^{(n)}_0(3) < e^{(n)}_0(5) < \cdots < \displaystyle \lim_{m \to \infty} e^{(n)}_0(m)
			< \cdots < e^{(n)}_0(4) < e^{(n)}_0(2) < e^{(n)}_0(0) \\
		<& e^{(n)}_2(1) < e^{(n)}_2(3) < e^{(n)}_2(5) < \cdots < \displaystyle \lim_{m \to \infty} e^{(n)}_2(m)
			< \cdots < e^{(n)}_2(4) < e^{(n)}_2(2) < e^{(n)}_2(0) . 
	\end{align*}
\end{lem}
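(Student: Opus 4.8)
The plan is to recognise the three rows of the statement as orbits of three very simple fractional linear maps and then to run the standard ``decreasing map $\Rightarrow$ interleaved convergence'' argument. Fix $n\ge 1$ and put $g_0(t)=[0,1,1,2+t]$, $g_1(t)=[0,2,1,1+t]$, $g_2(t)=[0,1,2,1+t]$, the maps occurring in (\ref{eq:e-rec}); then $e^{(n)}_i(m+1)=g_i(e^{(n)}_i(m))$ for all $m\ge 0$, this recursion also holding for the values defined through the extension (\ref{eq:e-gen}) because the latter was obtained by iterating $g_i$. Each $g_i$ is a composition of three strictly decreasing maps $u\mapsto 1/(c+u)$ with $c\in\{1,2\}$, hence is itself strictly decreasing, and a short computation pins down its range: $g_0((0,\infty))\subset(1/2,3/5)$, $g_1((0,\infty))\subset(1/3,2/5)$, $g_2((0,\infty))\subset(2/3,3/4)$. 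In particular $g_i^2:=g_i\circ g_i$ is strictly increasing, and every orbit $\{e^{(n)}_i(m)\}_m$ is bounded (it lies in $(0,1)$).

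The fixed point drives the rest. By (\ref{eq:e-gen}) the orbit is a fractional linear function of $r^m$, and $r^m\to 0$, so $e^{(n)}_i(m)$ converges as $m\to\infty$ to a point $\ell_i$, which is then the unique fixed point of $g_i$ in $(0,\infty)$: solving the relevant quadratic gives $\ell_0=(-2+\sqrt{10})/2$, $\ell_1=(-2+\sqrt{10})/3$, $\ell_2=(-1+\sqrt{10})/3$ (these are also the numbers $\lim_m e_i(m)$ of Lemma \ref{lem:e-property}(4)). Each of $\{e^{(n)}_i(2m)\}_m$ and $\{e^{(n)}_i(2m+1)\}_m$ is an orbit of the increasing map $g_i^2$, hence monotone, and, being bounded, convergent; its limit must be $\ell_i$, because the other fixed point of $g_i^2$ is the repelling fixed point of $g_i$, which is negative. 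Since $g_i$ is decreasing and fixes $\ell_i$, it sends $\{t<\ell_i\}$ onto $\{t>\ell_i\}$; so from $e^{(n)}_i(0)<\ell_i$ one gets $e^{(n)}_i(1)>\ell_i$, forcing the even subsequence to increase up to $\ell_i$ and the odd subsequence to decrease down to it, and symmetrically if $e^{(n)}_i(0)>\ell_i$. Everything thus reduces to locating $e^{(n)}_i(0)$ relative to $\ell_i$.

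This I would settle by truncating the periodic continued fractions: for every $n\ge 1$ the leading partial quotients are $n$-independent, so $e^{(n)}_0(0)>[0,1,1,1,2]=5/8$, $e^{(n)}_1(0)<[0,2,1,1,1,2]=8/21$ and $e^{(n)}_2(0)>[0,1,2,1,1,1,2]=21/29$, and the comparisons $5/8>\ell_0$, $8/21<\ell_1$, $21/29>\ell_2$ reduce, after clearing denominators, to $169>160$, $484<490$, $8464>8410$. Hence $e^{(n)}_0(0)>\ell_0$, $e^{(n)}_1(0)<\ell_1$, $e^{(n)}_2(0)>\ell_2$, and the previous paragraph delivers the three interleaved chains exactly as stated (the even $e^{(n)}_1(m)$ increasing to $\ell_1$ and the odd ones decreasing; for $i=0,2$ the odd $e^{(n)}_i(m)$ increasing to $\ell_i$ and the even ones decreasing). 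Finally, to splice the three blocks into one chain it suffices to check $e^{(n)}_1(1)<e^{(n)}_0(1)$ and $e^{(n)}_0(0)<e^{(n)}_2(1)$, since those chains make $e^{(n)}_1(1)$, $e^{(n)}_0(1)$, $e^{(n)}_0(0)$, $e^{(n)}_2(1)$ the extrema of their respective blocks; and both are immediate from the ranges above: $e^{(n)}_1(1)=1/(2+e^{(n)}_0(1))<2/5<1/2<e^{(n)}_0(1)$ because $e^{(n)}_0(1)=g_0(e^{(n)}_0(0))\in(1/2,3/5)$, and $e^{(n)}_0(0)<[0,1,1,1]=2/3<e^{(n)}_2(1)$ because $e^{(n)}_2(1)=g_2(e^{(n)}_2(0))\in(2/3,3/4)$.

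The only place needing care is the comparison of $e^{(n)}_i(0)$ with $\ell_i$ for $i=1,2$: here the two quantities differ by only a few thousandths (they lie near $0.38$ and $0.72$), so the crude estimates $e^{(n)}_1(0)<2/5$ and $e^{(n)}_2(0)<3/4$ coming from the ranges of $g_1,g_2$ are too weak and one genuinely needs the sharper truncations $8/21$, $21/29$ together with the closed forms of $\ell_1,\ell_2$. Everything else — the monotonicity and ranges of the $g_i$, the straddling of $\ell_i$ by consecutive orbit points, and the two splicing inequalities — is routine. Alternatively one can bypass the straddling step by writing $e^{(n)}_i(m)=\phi_i(r^m)$ directly from (\ref{eq:e-gen}) as a fractional linear function $\phi_i$ of the single variable $r^m$, noting that its pole avoids $[r,1]\supset\{r^m:m\ge 0\}$ via the bound $c_i(m)e^{(n)}_i(0)+d_i(m)>4$ from the proof of Lemma \ref{lem:e-property}(3), and reading the sign of $\phi_i'$ off a quadratic in $e^{(n)}_i(0)$ whose roots are precisely the two fixed points of $g_i$; this leads to the same sign checks.
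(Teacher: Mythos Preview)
Your proof is correct and covers all the required inequalities. The approach differs from the paper's in presentation, though the two converge on the same decisive comparison. The paper works directly from the closed form (\ref{eq:e-gen}): it computes that $e^{(n)}_0(m)<e^{(n)}_0(m+1)$ is equivalent to $r^m\bigl(2(e^{(n)}_0(0))^2+4e^{(n)}_0(0)-3\bigr)<0$, then checks the quadratic is positive by invoking the continued-fraction comparison $e^{(n)}_0(0)>[0,\overline{1,1,2}]$ from \cite[Lemma~1.24(2)]{aigner}; the alternation then comes for free from the sign of $r^m$, and the two splicing inequalities are likewise read off from \cite[Lemma~1.24(2)]{aigner}. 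You instead run the standard dynamical argument (decreasing contraction $g_i$, increasing $g_i^2$, monotone subsequences straddling the attracting fixed point $\ell_i$), and settle the position of $e^{(n)}_i(0)$ relative to $\ell_i$ by explicit convergent truncations $5/8$, $8/21$, $21/29$ together with the closed forms of $\ell_i$ --- which is exactly the content of the quadratic sign check, since the roots of $2t^2+4t-3$, $3t^2+4t-2$, $3t^2+2t-3$ are $\ell_0,\ell_1,\ell_2$ and their negative conjugates. Your route is more self-contained (no external lemma cited), and your range-based splicing $e^{(n)}_1(1)<2/5<1/2<e^{(n)}_0(1)$ and $e^{(n)}_0(0)<2/3<e^{(n)}_2(1)$ is arguably cleaner than the paper's; the paper's route is shorter once (\ref{eq:e-gen}) is in hand. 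Your closing remark about reading the sign of $\phi_i'$ off a quadratic in $e^{(n)}_i(0)$ is precisely the paper's method.
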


\begin{proof}
By using (\ref{eq:e-gen}), we observe 
\begin{gather*}
	e^{(n)}_0(m)<e^{(n)}_0(m+1)  \iff r^m \left( 2 \left( e^{(n)}_0(0) \right)^2 +4 e^{(n)}_0(0) -3 \right) <0 . 
\end{gather*}
Since 
\begin{gather*}
	e^{(n)}_0(0)=[0, \overline{1, (1,1,2)^{n+1}}] > [0, \overline{1,1,2}]=\dfrac{-2+\sqrt{10}}{2} 
\end{gather*}
by \cite[Lemma1.24(2)]{aigner}, we have
$2 \left( e^{(n)}_0(0) \right)^2 +4 e^{(n)}_0(0) -3>0$, 
and hence, 
\begin{gather*}
 e^{(n)}_0(m)<e^{(n)}_0(m+1) \iff r^m <0.
\end{gather*} 
Similarly, we have
\begin{gather*}
e^{(n)}_0(m)<e^{(n)}_0(m+2) \iff r^m <0 .
\end{gather*} 
These show 
\begin{gather*}
	e^{(n)}_0(1) < e^{(n)}_0(3) < e^{(n)}_0(5) < \cdots < \displaystyle \lim_{m \to \infty} e^{(n)}_0(m)
		< \cdots < e^{(n)}_0(4) < e^{(n)}_0(2) < e^{(n)}_0(0) . 
\end{gather*}
The desired inequalities among $e^{(n)}_i(m)$ for $i = 1,2$ are proved by the same argument as above.  
Furthermore,  it follows from \cite[Lemma1.24(2)]{aigner} that
\begin{gather*}
	e^{(n)}_1(1)=[0, \overline{2, 1, 1, 2, 1, (1,1,2)^{n-1}, 1, 1}] < [0, \overline{1, 1, 2, 1, (1,1,2)^n}]=e^{(n)}_0(1)
\end{gather*}
and
\begin{gather*}
	e^{(n)}_0(0)=[0, \overline{1, (1,1,2)^{n+1}}] < [0, \overline{1, 2, 1, 1, 2, 1, (1,1,2)^{n-1}, 1}]=e^{(n)}_2(1) . 
\end{gather*}
This completes the proof. 
\end{proof}

\vskip 10mm
\section{Some sequence of Sturmian words} \label{sec:accumulate}

For $n \geq 1$, let $x^{(n)}$ be a Sturmian word such that 
the slope of $x^{(n)}$ is equal to 
\begin{gather*}
	\theta^{(n)} = [0, a_1, a_2, \cdots, a_K, \overline{(2, 1, 1)^n, 2,1,1,1}]
\end{gather*}
for some integer $K$. We assume $x^{(n)}$ satisfies the following condition. 
\begin{itemize}
\item The case $[2]_k$ in Lemma \ref{buglem:7.2} holds for all $k > K$. 
\end{itemize}
For simplicity, we assume that $K$ and positive integers $a_1, a_2, \cdots, a_K$  are independent of $n$. Let $p^{(n)}_j/q^{(n)}_j$ be the $j$-th convergent of $\theta^{(n)}$ and let 
\begin{gather*}
\eta^{(n)}_j = \dfrac{q^{(n)}_{j-1}}{q^{(n)}_j}, \qquad t^{(n)}_j = \dfrac{|W^{(n)}_j|}{q^{(n)}_j}, \qquad \varepsilon^{(n)}_j = \dfrac{2}{q^{(n)}_j}.
\end{gather*}  
We set 
\begin{align*}
	\zeta^{(n)}_{i}&:=\liminf_{k \to \infty} 
			\left( 1+\dfrac{1+\eta^{(n)}_{(3n+4)k+i+K}}{t^{(n)}_{(3n+4)k+i+K}+1+\eta^{(n)}_{(3n+4)k+i+K}} \right) , \\
	\xi^{(n)}_{i}&:=\liminf_{k \to \infty} 
			\left( 1+\dfrac{t^{(n)}_{(3n+4)k+i+K}+\eta^{(n)}_{(3n+4)k+i+K}}{1+\eta^{(n)}_{(3n+4)k+i+K}} \right)  
\end{align*}
for $0 \leq i \leq 3n+3$.  The purpose of this section is to prove the following:

\begin{thm} \label{th:lim-xn}
	The sequence $\{ \rep(x^{(n)}) \}_n$ converges to 
	$r_1 = (48+\sqrt{10})/31$ as $n \to \infty$. 
\end{thm}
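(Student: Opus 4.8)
The plan is to compute $\rep(x^{(n)})$ exactly for each fixed $n$ by using the characterization of $\rep$ coming from Lemma \ref{buglem:7.6} and the lower bound \eqref{eq:rmin}, and then let $n \to \infty$. Concretely, since the case $[2]_k$ holds for all $k > K$, the slope $\theta^{(n)}$ is eventually periodic with period $(2,1,1)^n,2,1,1,1$ of length $3n+4$, so the sequences $\eta^{(n)}_j$ and $t^{(n)}_j$ are, along each residue class $j \equiv i \pmod{3n+4}$, convergent as $k\to\infty$ to limits that are quadratic irrationals expressible through the $e^{(n)}_i(m)$ of \S\ref{sec:continuedfraction}. First I would identify $\lim_k \eta^{(n)}_{(3n+4)k+i+K}$ with the appropriate purely periodic continued fraction; reading off the period one sees these are exactly the numbers $e^{(n)}_0(m), e^{(n)}_1(m), e^{(n)}_2(m)$ (and one extra value coming from the lone extra $1$ in the period), for $m$ ranging over $0,\dots,n$ or $0,\dots,n+1$. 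Then, using \eqref{eq:t} in the limiting form $t = \eta(t'+\eta')$ together with the telescoping expansion of $t_k$ as a sum of products of consecutive $\eta$'s (exactly as in the proof of Lemma \ref{lem:r2}), I would express $\lim_k t^{(n)}_{(3n+4)k+i+K}$ rationally in terms of the $\eta$-limits; the relation \eqref{eq:e-relation} is the clean identity that makes these $t$-limits collapse to simple closed forms.

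Next I would assemble $\rep(x^{(n)})$ from the quantities $\zeta^{(n)}_i$ and $\xi^{(n)}_i$. By Lemma \ref{buglem:7.6}, along the subsequence of lengths $n = |W_j|+q_j+q_{j-1}-2$ the ratio $r(n,x)/n$ is asymptotically $\zeta^{(n)}_{i}$ for the relevant residue $i$, and along $n = q_{j+1}+q_j-2$ it is asymptotically $\xi^{(n)}_{i}$; conversely \eqref{eq:rmin} shows these are the only places where $r(n,x)/n$ can be small, so that
\begin{gather*}
	\rep(x^{(n)}) = \min\left( \min_{0 \le i \le 3n+3} \zeta^{(n)}_i, \; \min_{0 \le i \le 3n+3} \xi^{(n)}_i \right).
\end{gather*}
This is the same bookkeeping as in Lemma \ref{lem:r2} (where the period had length $7$); the point is that one must check the minimum is realized by one of the $\zeta^{(n)}_i$ and not by an $\xi^{(n)}_i$. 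Here the monotonicity chain of Lemma \ref{lem:e-decrease} does the heavy lifting: it orders all the $e^{(n)}_i(m)$, hence orders the corresponding $\eta$-limits, hence lets me locate which residue $i$ minimizes $\zeta^{(n)}_i$ — it will be the one whose $\eta$-limit is largest (since $\zeta$ decreases in $\eta$ for the relevant range of $t$), namely an $e^{(n)}_0$ value, and the expansion $e^{(n)}_0(m)$ is largest at $m=0$ by Lemma \ref{lem:e-decrease}. So $\rep(x^{(n)}) = \zeta^{(n)}_{i_0}$ for the residue $i_0$ associated with $e^{(n)}_0(0)$, together with a verification (again via \ref{lem:e-decrease}) that all $\xi^{(n)}_i$ stay strictly above this value.

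Finally I would pass to the limit $n \to \infty$. By Lemma \ref{lem:e-property}(2), $e^{(n)}_0(0) \to e_0 = \sqrt{2}/\sqrt{5}$ and the other boundary values also converge, so $\lim_{n\to\infty}\zeta^{(n)}_{i_0}$ is a fixed quadratic irrational built from $e_0$, $e_1$, $e_2$ via the $t$-formula, and a direct computation identifies it with $r_1 = (48+\sqrt{10})/31$ — indeed this matches the closed form already exhibited at the end of the proof of Proposition \ref{prop:2111r_1}, where the same limit $(48+\sqrt{10})/31$ appeared as $\lim_{n\to\infty}$ of the bound for the forbidden pattern $2111(211)^n1$, whose slope is precisely $\theta^{(n)}$. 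I would also need the uniformity in Lemma \ref{lem:e-property}(3) to interchange the limits $\liminf_k$ and $\lim_n$ cleanly, or alternatively argue that for each $n$ the relevant minimum is attained at a bounded residue $i_0$ so no uniformity subtlety arises. The main obstacle is the combinatorial bookkeeping of step two: showing rigorously that among the roughly $2(3n+4)$ candidate values $\zeta^{(n)}_i, \xi^{(n)}_i$ the minimum is the single value $\zeta^{(n)}_{i_0}$, uniformly in $n$ — this is where one must carefully track, through the ordering of Lemma \ref{lem:e-decrease}, both that $\zeta^{(n)}_{i_0}$ beats every other $\zeta^{(n)}_i$ and that every $\xi^{(n)}_i$ is larger; the monotonicity of the one-variable functions $\eta \mapsto 1 + \frac{1+\eta}{t+1+\eta}$ and $\eta \mapsto 1 + \frac{t+\eta}{1+\eta}$ (for the appropriate coupled values of $t$) reduces each comparison to an inequality between explicit quadratic irrationals, but there are many of them.
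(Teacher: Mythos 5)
Your overall architecture (reduce $\rep(x^{(n)})$ to $\min_i\{\zeta^{(n)}_i,\xi^{(n)}_i\}$ via Lemma \ref{buglem:7.6} and the lower bound of type (\ref{eq:rmin}), then compute the limit of the distinguished $\zeta$) is the same as the paper's, and your first and last steps are sound. The genuine gap is in the middle step, which is also the heart of the matter: locating the minimizer among the roughly $2(3n+4)$ candidates. You propose to do this from the ordering of the $\eta$-limits supplied by Lemma \ref{lem:e-decrease}, arguing that the minimizing $\zeta^{(n)}_i$ is the one whose $\eta$-limit is largest and that this is an $e^{(n)}_0$-value at $m=0$. Both halves of this fail. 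First, $\zeta^{(n)}_{3m+j}=1+\dfrac{1+e^{(n)}_j(m)}{\frac{\lambda^{(n)}_j(m)}{1-\nu^{(n)}_j}+1+e^{(n)}_j(m)}$ depends on the coupled $t$-limit $\lambda^{(n)}_j(m)/(1-\nu^{(n)}_j)$, which varies with the residue in a complicated way (it is a long sum of products of the $e$'s); for fixed $t$ the map $\eta\mapsto 1+\frac{1+\eta}{t+1+\eta}$ is \emph{increasing} in $\eta$, so ``largest $\eta$ minimizes $\zeta$'' is not a valid monotonicity principle, and the comparison cannot be made without controlling the $t$-limits. Second, by Lemma \ref{lem:e-decrease} the largest $\eta$-limit is $e^{(n)}_2(0)$, not an $e^{(n)}_0$-value, so the heuristic would in any case point at the wrong residue; that the true minimizer (for large $n$) is the one attached to $e^{(n)}_0(0)$ is a conclusion, not something the ordering alone yields.

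Moreover, the exact equality $\rep(x^{(n)})=\zeta^{(n)}_{i_0}$ for \emph{every} fixed $n$, which you aim for, is neither established nor needed: the paper only proves it for all sufficiently large $n$, and does so by a limiting argument that you cannot bypass with the remark that ``the minimum is attained at a bounded residue.'' Since the number of residues grows with $n$, one must compare $\zeta^{(n)}_0$ against a family of candidates indexed by $m\le n$, and this forces uniformity in $m$: the paper proves that $\varphi^{(n)}_j(m)=\zeta^{(n)}_{3m+j}$ and $\Phi^{(n)}_j(m)=\xi^{(n)}_{3m+j}$ converge as $n\to\infty$ uniformly in $m$ (Lemma \ref{lem:asm(1)}, resting on Lemma \ref{lem:e-property}(3) and the recurrences (\ref{eq:lambda0-recurrence}), (\ref{eq:lambda12-recurrence})), establishes by induction on $m$ the explicit lower bounds $\varphi_0(m+1)>331/200$, $\Phi_0(m)>5/3$, $\varphi_j(m),\Phi_j(m)>5/3$ for $j=1,2$ (Lemmas \ref{lem:0case}, \ref{lem:12case}), notes $\varphi_0(0)=r_1<331/200$, and then invokes the double-sequence Lemma \ref{lem:memo2} to get a uniform gap $\delta$. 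Your proposal flags this bookkeeping as the main obstacle but the mechanism you offer for it (pairwise comparisons of quadratic irrationals driven solely by Lemma \ref{lem:e-decrease}) is not a workable substitute; to repair the argument you would need to bring in the $t$-limits explicitly and prove inequalities of the above kind, which is exactly the paper's Step 3.
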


The proof of Theorem \ref{th:lim-xn} will be accomplished by the following three steps: 

\begin{itemize}
\item[\textbf{Step 1}] we prove
\begin{gather*}
\rep(x^{(n)})=\displaystyle \min_{0 \leq i \leq 3n+3} \left\{ \zeta^{(n)}_{i}, \xi^{(n)}_{ i} \right\} 
\end{gather*}
for all $n \ge 1$;
\item[\textbf{Step 2}]  we prove
\begin{gather*}
\lim_{n \to \infty} \zeta^{(n)}_{0} = r_1 ;
\end{gather*} 
\item[\textbf{Step 3}] we prove
\begin{gather*}
 \zeta^{(n)}_{0} = \displaystyle \min_{0 \leq i \leq 3n+3} \left\{ \zeta^{(n)}_{i}, \xi^{(n)}_{ i} \right\} 
\end{gather*}
holds  for all sufficiently large $n$. 
\end{itemize}
We need a series of lemmas in both Step 2 and Step 3. 
To accomplish Step 3, we classify $0 \le i \le 3n+3$ by modulo $3$ and describe both $\zeta^{(n)}_{3m+j}$ and $\xi^{(n)}_{3m+j}$ in terms of  continued fractions $e^{(n)}_j(m)$ introduced in \S 3,  and then we estimate both $\lim_{n \to \infty} \zeta^{(n)}_{3m+j}$ and $\lim_{n \to \infty} \xi^{(n)}_{3m+j}$ (Lemmas \ref{lem:0case} and \ref{lem:12case}). These lemmas and some simple result on double sequences (Lemma \ref{lem:memo2}) give the equation of Step 3. The case $m = j = 0$, i.e., $\zeta^{(n)}_0$ is previously calculated in Step 2. 

In the following,  we often omit the superscript $(n)$ to simplify the notations if no confusion arises. For example, we write simply $W_j, q_j, \eta_j, t_j, \cdots $ for $W^{(n)}_j, q^{(n)}_j, \eta^{(n)}_j, t^{(n)}_j, \cdots$.

\begin{prop} \label{lem:rep-min}
	For $n \ge 1$, 
	\begin{gather*}
		\rep(x^{(n)})=\displaystyle \min_{0 \leq i \leq 3n+3} \left\{ \zeta^{(n)}_{i}, \xi^{(n)}_{ i} \right\} . 
	\end{gather*}
\end{prop}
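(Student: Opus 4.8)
The plan is to reduce the computation of $\rep(x^{(n)}) = \liminf_{n \to \infty} r(n, x^{(n)})/n$ to the two families of ``peaks'' of the piecewise-decreasing function $n \mapsto r(n,x)/n$ identified by Bugeaud and Kim, and then exploit the eventual periodicity of the continued fraction expansion of $\theta^{(n)}$. First I would recall that, by the hypothesis that the case $[2]_k$ holds for every $k > K$, Lemma~\ref{buglem:7.6} applies for all large $k$ and gives the two upper bounds
\begin{gather*}
	\frac{r(|W_k|+q_k+q_{k-1}-2, x)}{|W_k|+q_k+q_{k-1}-2} < 1 + \frac{1+\eta_k}{t_k+1+\eta_k} + \varepsilon_k, \\
	\frac{r(q_k+q_{k-1}-2, x)}{q_k+q_{k-1}-2} < 1 + \frac{t_k+\eta_k}{1+\eta_k} + \varepsilon_k,
\end{gather*}
so that taking $\liminf$ over $k$ in each congruence class of $k$ modulo the period $3n+4$ gives $\rep(x^{(n)}) \le \min_{0\le i\le 3n+3}\{\zeta^{(n)}_i, \xi^{(n)}_i\}$, since $\varepsilon_k \to 0$. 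The content of the proposition is the reverse inequality, and for that I would follow the strategy of \cite[Proof of Theorem 3.4]{bugeaud}: establish the matching lower bound (\ref{eq:rmin}) for $r(n,x)$ on the relevant ranges of $n$ (this uses only Lemma~\ref{buglem:7.2} and the definition of $r(n,x)$, together with the fact that $[2]_k$ holds for all large $k$, which forces $W_{k+1} = W_k M_{k-1}$ and pins down where repeated factors first occur).

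Next I would argue that these two lower bounds, evaluated at the two families of endpoints $n = |W_k|+q_k+q_{k-1}-2$ and $n = q_{k+1}+q_k-2$, are the only places where $r(n,x)/n$ can come close to its liminf. Concretely: on each interval $q_k+q_{k-1}-1 \le n \le |W_k|+q_k+q_{k-1}-2$ the lower bound $r(n,x) \ge n + q_k + q_{k-1}$ means $r(n,x)/n \ge 1 + (q_k+q_{k-1})/n$, which is decreasing in $n$ and hence minimized at the right endpoint, contributing exactly (in the limit) the quantity $\zeta^{(n)}_i$ for the appropriate $i \equiv k$; similarly on $|W_k|+q_k+q_{k-1}-1 \le n \le q_{k+1}+q_k-2$ the bound $r(n,x)\ge n+|W_k|+q_k+q_{k-1}$ gives a ratio decreasing in $n$, minimized at the right endpoint, which in the limit is $\xi^{(n)}_{i+1}$ (after rewriting $q_{k+1}+q_k-2 = q_{k+1}+q_{k+1}\eta_{k+1}\cdot\tfrac{q_k}{q_{k+1}}\cdots$ — the routine algebra matching the definition of $\xi^{(n)}_i$). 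Since as $k\to\infty$ the only accumulation values of $\eta_k$ and $t_k$ are the $3n+4$ limits along each residue class (these limits exist because the continued fraction tail is purely periodic and, as in Lemma~\ref{lem:r2}, $t_k$ is given by a convergent geometric-type sum with ratio bounded away from $1$), the $\liminf$ over all $n$ of $r(n,x)/n$ equals the minimum over the $3n+4$ residue classes of these endpoint limits, i.e. $\min_{0\le i\le 3n+3}\{\zeta^{(n)}_i, \xi^{(n)}_i\}$.

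Putting the two halves together yields the claimed equality. The main obstacle is the bookkeeping in the second half: one must verify that the intermediate points $n$ strictly inside the two interval families never give a ratio smaller than the endpoint limits — this requires the piecewise-decreasing property (already recorded in \S\ref{sec:r2}) together with care that the two interval families tile all sufficiently large $n$ with no gaps, which in turn relies on $|W_{k+1}| = |W_k| + q_{k-1}$ and the recurrence $q_{k+1} = a_{k+1}q_k + q_{k-1}$. A secondary technical point, already handled in the proof of Lemma~\ref{lem:r2}, is the existence of the limits defining $\zeta^{(n)}_i$ and $\xi^{(n)}_i$ (so that the $\liminf$'s are genuine limits along residue classes); I would simply cite that computation, noting that $\eta_k \le 3/4 < 1$ for $k > K+1$ guarantees the relevant series converge. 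I expect no genuinely new idea is needed beyond adapting \cite[Proof of Theorem 3.4]{bugeaud} to the period-$(3n+4)$ setting.
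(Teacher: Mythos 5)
Your proposal is correct and follows essentially the same route as the paper: the upper bound comes from Lemma \ref{buglem:7.6}(1)--(2) applied along each residue class modulo $3n+4$, and the lower bound comes from the piecewise estimate (\ref{eq:rmin}) borrowed from \cite[Proof of Theorem 3.4]{bugeaud}, with the ratio minimized at the right endpoints of the two interval families. The index shift you note (the second family of endpoints producing the $\xi$-quantity at the next index) is harmless, exactly as in the paper, since the minimum is taken over all residue classes.
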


\begin{proof}
Let $x = x^{(n)}$ and $k$ be an integer with $k>K$. 
For $0 \leq i \leq 3n+3$, 
we have 
\begin{align*}
	& \dfrac{r(|W_{(3n+4)k+i+K}|+q_{(3n+4)k+i+K}+q_{(3n+4)k+i-1+K}-2, x)}
			{|W_{(3n+4)k+i+K}|+q_{(3n+4)k+i+K}+q_{(3n+4)k+i-1+K}-2} \\
		&\qquad <1+\dfrac{1+\eta_{(3n+4)k+i+K}}{t_{(3n+4)k+i+K}+1+\eta_{(3n+4)k+i+K}}+\varepsilon_{(3n+4)k+i+K}
\end{align*}
by lemma \ref{buglem:7.6}(1) and 
\begin{gather*}
	\dfrac{r(q_{(3n+4)k+i+K}+q_{(3n+4)k+i-1+K}-2, x)}{q_{(3n+4)k+i+K}+q_{(3n+4)k+i-1+K}-2}
			<1+\dfrac{t_{(3n+4)k+i+K}+\eta_{(3n+4)k+i+K}}{1+\eta_{(3n+4)k+i+K}}+\varepsilon_{(3n+4)k+i+K}
\end{gather*}
by lemma \ref{buglem:7.6}(2). 
These show
\begin{align*}
	\rep(x)
		&\leq \displaystyle \min_{0 \leq i \leq 3n+3} \left\{ \zeta^{(n)}_{i} , \xi^{(n)}_{i} \right\}. 
\end{align*}
Similarly as in \cite[Proof of Theorem 3.4]{bugeaud}, 
$r(m, x)$ satisfies the following inequality:
\begin{align*}
	r(m, x)
	\geq \begin{cases}
		m+q_k+q_{k-1} & (q_k+q_{k-1}-1 \leq m \leq |W_k|+q_k+q_{k-1}-2) ,  \\
		m+|W_k|+q_k+q_{k-1} & (|W_k|+q_k+q_{k-1}-1 \leq m \leq q_{k+1}+q_k-2) . 
	\end{cases}
\end{align*}
This implies that 
\begin{align*}
	\rep(x) 
		&\geq \begin{cases}
			\displaystyle \liminf_{k \to \infty} \dfrac{|W_k|+2q_k+2q_{k-1}-2}{|W_k|+q_k+q_{k-1}-2} \\
			\displaystyle \liminf_{k \to \infty} \dfrac{|W_k|+q_{k+1}+2q_k+q_{k-1}-2}{q_{k+1}+q_k-2}
			\end{cases} \\
		&= \begin{cases}
			1+\displaystyle \liminf_{k \to \infty} \dfrac{1+\eta_k}{t_k+1+\eta_k} \\
			1+\displaystyle \liminf_{k \to \infty} \dfrac{t_{k}+\eta_{k}}{1+\eta_{k}}.
			\end{cases}
\end{align*}
Hence we obtain 
\begin{align*}
	\rep(x)
		&\geq \displaystyle \min_{0 \leq i \leq 3n+3} \left\{ \zeta^{(n)}_{i}, \xi^{(n)}_{i} \right\}. 
\end{align*}
This completes the proof. 
\end{proof}

We proceed to Step 2. 

\begin{prop} \label{prop:appendix}
	The sequence $\{ \zeta^{(n)}_{0} \}_n$ converges to $r_1$ as $n \to \infty$. 
\end{prop}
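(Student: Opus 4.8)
The plan is to reduce the statement to the evaluation of two one-parameter limits and then let $n\to\infty$. Fix $n$. Since the slope $\theta^{(n)}$ is eventually periodic of period $3n+4$, along the progression $j=(3n+4)k+K$ both $\eta^{(n)}_j$ and $t^{(n)}_j$ converge as $k\to\infty$: convergence of $\eta^{(n)}_j=[0,a_j,a_{j-1},\dots]$ is the classical contraction of finite continued fractions, and convergence of $t^{(n)}_j$ follows by iterating \eqref{eq:t} over one full period, the product of the $\eta$'s over a period being $<1$. Hence the $\liminf$ defining $\zeta^{(n)}_0$ is a genuine limit and
\[
 \zeta^{(n)}_0=1+\frac{1+\bar\eta^{(n)}}{\bar t^{(n)}+1+\bar\eta^{(n)}},\qquad
 \bar\eta^{(n)}:=\lim_{k}\eta^{(n)}_{(3n+4)k+K},\quad
 \bar t^{(n)}:=\lim_{k}t^{(n)}_{(3n+4)k+K}.
\]
Reading the period of $\theta^{(n)}$ backwards from an index $\equiv K\pmod{3n+4}$ gives $\bar\eta^{(n)}=[0,\overline{1,1,1,2,(1,1,2)^n}]=e^{(n)}_0(0)$, so $\bar\eta^{(n)}\to e_0=\sqrt{10}/5$ by Lemma \ref{lem:e-property}(2).

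Next I would write $\bar t^{(n)}$ as a convergent series. Because the case $[2]_k$ holds for all $k>K$ (Lemma \ref{buglem:7.2}), $|W_j|=|W_K|+\sum_{i=K-1}^{j-2}q_i$, whence $t^{(n)}_j=|W_K|/q_j+\sum_{l\ge 2}q_{j-l}/q_j$ (finite sum up to $l=j-K+1$). Since $q_{j-l}/q_j=\prod_{i=0}^{l-1}\eta_{j-i}$ and every $\eta$ along the periodic tail is $<3/4$ (Lemma \ref{lem:e-property}(1)), the $|W_K|/q_j$ term vanishes and the series converges geometrically, uniformly in $n$; so $\bar t^{(n)}=\sum_{l\ge 2}\prod_{i=0}^{l-1}\bar\eta^{(n)}_{j-i}$, where $\bar\eta^{(n)}_{j-i}:=\lim_k\eta_{j-i}$ is the purely periodic continued fraction obtained from the $i$-th cyclic shift of the block $1,1,1,2,(1,1,2)^n$. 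I would then match each such shift to a continued fraction of Section \ref{sec:continuedfraction}: one checks $\bar\eta^{(n)}_{j-1}=e^{(n)}_0(n+1)$, $\bar\eta^{(n)}_{j-2}=e^{(n)}_2(n)$, $\bar\eta^{(n)}_{j-3}=e^{(n)}_1(n)$, $\bar\eta^{(n)}_{j-4}=e^{(n)}_0(n)$, and in general the label cycles through $0,2,1$ while the $m$-argument drops by one every three steps. (Equivalently, one could obtain closed forms for $\bar\eta^{(n)}$ and $\bar t^{(n)}$ directly by solving the period recurrence for $(q_j)$ with characteristic roots $3\pm\sqrt{10}$, exactly as in the proof of Proposition \ref{prop:2111r_1}, and kill the $(3-\sqrt{10})^n$ terms at the end.)

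Finally I would pass to the limit $n\to\infty$. The uniform geometric bound permits interchanging $\lim_n$ with $\sum_l$, so $\lim_n\bar t^{(n)}=\sum_{l\ge 2}\prod_{i=0}^{l-1}(\lim_n\bar\eta^{(n)}_{j-i})$. For $i=0$ the limit is $e_0=\sqrt{10}/5$; for $i\ge 1$ the $m$-argument tends to $\infty$, so by the uniform convergence in $m$ (Lemma \ref{lem:e-property}(3)), the limits of Lemma \ref{lem:e-property}(4), and the double-sequence lemma \ref{lem:memo2}, the value $\lim_n\bar\eta^{(n)}_{j-i}$ cycles with period $3$ through $(\sqrt{10}-2)/2$, $(\sqrt{10}-1)/3$, $(\sqrt{10}-2)/3$. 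Summing the resulting periodic series collapses via the two identities $1+\tfrac{\sqrt{10}-1}{3}+\tfrac{4-\sqrt{10}}{3}=2$ and $\tfrac{\sqrt{10}-2}{2}\cdot\tfrac{\sqrt{10}-1}{3}\cdot\tfrac{\sqrt{10}-2}{3}=\sqrt{10}-3$, and yields $\lim_n\bar t^{(n)}=(10+\sqrt{10})/15$. Substituting $\bar\eta\to\sqrt{10}/5$ and $\bar t\to(10+\sqrt{10})/15$ into the displayed formula for $\zeta^{(n)}_0$ and simplifying gives $\lim_n\zeta^{(n)}_0=1+(17+\sqrt{10})/31=r_1$.

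The main obstacle is the bookkeeping in the middle step: correctly identifying every cyclic shift of the period block with the right $e^{(n)}_i(m)$, and then justifying the mixed double limit (one continued-fraction argument staying bounded, the others tending to infinity) through the uniformity lemmas of Section \ref{sec:continuedfraction}. Once the limiting $3$-cycle of $\bar\eta$-values is pinned down the algebra collapses cleanly to $r_1$, but organizing the periodic series and the two cancellations above is where care is needed.
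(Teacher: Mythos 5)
Your argument is correct and reaches the paper's intermediate values ($\lim_k\eta^{(n)}_{(3n+4)k+K}=e^{(n)}_0(0)\to\sqrt{10}/5$, $\lim_n\lim_k t^{(n)}_{(3n+4)k+K}=(10+\sqrt{10})/15$, hence $r_1$), and its overall strategy is the same as the paper's: identify the $k$-limits of $\eta$ and $t$ along the progression with the continued fractions $e^{(n)}_i(m)$ of \S\ref{sec:continuedfraction} (the paper's (\ref{eq:eta})), then let $n\to\infty$ using Lemma \ref{lem:e-property}. Where you differ is the bookkeeping for the double limit of $t$: the paper runs the recurrences $s^{(n)}_k=\alpha^{(n)}_k+\beta^{(n)}_k s^{(n)}_{k-1}$ and $\gamma^{(n)}(m)=2+\tau^{(n)}(m)\gamma^{(n)}(m-1)$ and proves Lemmas \ref{lem:limbeta}--\ref{lem:limalpha}, whereas you expand $t_j$ once and for all as the series $\sum_{l\ge2}\prod_{i=0}^{l-1}\eta_{j-i}$ and interchange $\lim_n$ with the sum via the uniform bound $\eta<3/4$; your two collapsing identities are exactly the paper's (\ref{eq:e-relation}) (i.e.\ $\sigma^{(n)}(m)=2$) and $\tau=\lim_m e_2(m)e_1(m)e_0(m)=\sqrt{10}-3$, and your periodic-series summation is the closed form $\gamma=2/(1-\tau)$ of Lemma \ref{lem:gamma}(4). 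Your route is shorter here; the paper's recurrence formalism pays off later, since the $\gamma^{(n)}(m)$-type bookkeeping is reused verbatim for the $\lambda^{(n)}_j(m)$ in Step 3 and for $f_n,h_n$ in \S\ref{sec:largest}. Two small points of hygiene: the bound $<3/4$ for the finite quantities $\eta^{(n)}_j$ ($j>K+1$) is not literally Lemma \ref{lem:e-property}(1) (which concerns the limits $e^{(n)}_i(m)$) but follows directly from the shape of the tail, as the paper notes; and Lemma \ref{lem:memo2} is about minima of double sequences, so it is not the right citation for the diagonal limit $\lim_n e^{(n)}_c(m_n)=\lim_m e_c(m)$ with $m_n\to\infty$ --- that step should instead be justified by the triangle-inequality argument combining Lemma \ref{lem:e-property}(3) and (4), exactly as in the proof of Lemma \ref{lem:gamma}(5). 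Neither point is a genuine gap.
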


Since 
\begin{gather*}
\zeta^{(n)}_{0} =\liminf_{k \to \infty} 
			\left( 1+\dfrac{1+\eta^{(n)}_{(3n+4)k+K}}{t^{(n)}_{(3n+4)k+K}+1+\eta^{(n)}_{(3n+4)k+K}} \right) , 
\end{gather*}
we have to compute the limits of 
both $\eta_{(3n+4)k+K} = \eta^{(n)}_{(3n+4)k+K}$ and $t_{(3n+4)k+K}=t^{(n)}_{(3n+4)k+K}$ as $k \to \infty$ to prove this Proposition.  Our computation method is essentially same as in Proof of Lemma \ref{lem:r2}. 
By the same argument as in \cite[Proof of Theorem 3.4]{bugeaud}, 
we have
\begin{align}
	\begin{aligned} \label{eq:eta}
	e^{(n)}_0(m) & = \displaystyle \lim_{k \to \infty} \eta_{(3n+4)k+3m+K} , \qquad (m=0,1,2,\cdots n+1) \\ 
	 e^{(n)}_1(m) & = \displaystyle \lim_{k \to \infty} \eta_{(3n+4)k+3m+1+K} , \qquad  (m=0,1,2,\cdots n) \\
	e^{(n)}_2(m) & = \displaystyle \lim_{k \to \infty} \eta_{(3n+4)k+3m+2+K},  \qquad (m=0,1,2,\cdots n). 
  \end{aligned}
\end{align}
To compute the limit of $t_{(3n+4)k+K}$, let
\begin{align}
	\begin{aligned} \label{def:beta}
	s^{(n)}_k=&  \sum_{\ell = 0}^{(3n+4)(k+1) - 1} \prod_{j = \ell}^{(3n+4)(k+1)} \eta_{j+K}, \\
	\alpha^{(n)}_k=& \sum_{\ell = 0}^{3n+3} \prod_{j = \ell}^{3n+4} \eta_{(3n+4)k + j + K}, \qquad 	\beta^{(n)}_k= \prod_{j = 1}^{3n+4} \eta_{(3n+4)k + j + K} . 
	\end{aligned}
\end{align}
Then 
\begin{gather*}
		s^{(n)}_k=\alpha^{(n)}_k+\beta^{(n)}_k s^{(n)}_{k-1} 
\end{gather*}
and $t_{(3n+4)(k+1)+K}$ is represented as 
\begin{gather*}
t_{(3n+4)(k+1)+K}=s^{(n)}_k+\dfrac{|W_K|}{q_{(3n+4)(k+1)+K}}.
\end{gather*}
Since $|W_K|/{q_{(3n+4)(k+1)+K}} \to 0$ as $k \to \infty$,  we will show the existence of the limit of $s^{(n)}_k$. 

\begin{lem} \label{lem:limbeta}
	$\displaystyle \lim_{n \to \infty} \displaystyle \lim_{k \to \infty} \beta^{(n)}_k=0$. 
\end{lem}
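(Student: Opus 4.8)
The plan is to use the fact that, for each fixed $n$, the quantity $\beta^{(n)}_k=\prod_{j=1}^{3n+4}\eta_{(3n+4)k+j+K}$ is a \emph{finite} product of $3n+4$ factors, each of which converges as $k\to\infty$. As $j$ runs over $\{1,2,\dots,3n+4\}$, the indices $(3n+4)k+j+K$ run through a complete residue system modulo the period $3n+4$ of $\theta^{(n)}$ (the term $j=3n+4$ contributing $\eta_{(3n+4)(k+1)+K}$, whose limit is the same as that of $\eta_{(3n+4)k+K}$). By the limit relations $(\ref{eq:eta})$, the limits of these factors as $k\to\infty$ are exactly the numbers $e^{(n)}_0(m)$ for $0\le m\le n+1$, together with $e^{(n)}_1(m)$ and $e^{(n)}_2(m)$ for $0\le m\le n$; there are $(n+2)+(n+1)+(n+1)=3n+4$ of them, one per factor. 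Hence $\lim_{k\to\infty}\beta^{(n)}_k$ exists and equals
\[
\Bigl(\prod_{m=0}^{n+1}e^{(n)}_0(m)\Bigr)\Bigl(\prod_{m=0}^{n}e^{(n)}_1(m)\Bigr)\Bigl(\prod_{m=0}^{n}e^{(n)}_2(m)\Bigr).
\]

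Next I would invoke Lemma~\ref{lem:e-property}(1), which provides the uniform bounds $0<e^{(n)}_0(m)<3/5$, $0<e^{(n)}_1(m)<2/5$, $0<e^{(n)}_2(m)<3/4$, valid for all $n\ge 1$ and all $m\ge 0$. Keeping only the first of the three products above (all factors of the other two being positive and less than $1$) gives
\[
0<\lim_{k\to\infty}\beta^{(n)}_k<\bigl(3/5\bigr)^{n+2}.
\]
Letting $n\to\infty$, the right-hand side tends to $0$, so $\displaystyle\lim_{n\to\infty}\lim_{k\to\infty}\beta^{(n)}_k=0$, which is the assertion.

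There is essentially no genuine obstacle; the only point deserving care is the index bookkeeping in the first paragraph, i.e.\ verifying that the $3n+4$ factors of $\beta^{(n)}_k$ correspond in the limit to exactly one copy of each $e^{(n)}_i(m)$ in the indicated ranges, so that $(\ref{eq:eta})$ applies to every factor. Once this is settled, the uniform bound $e^{(n)}_0(m)<3/5$ from Lemma~\ref{lem:e-property}(1) does all the remaining work.
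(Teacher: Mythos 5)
Your proposal is correct and follows essentially the same route as the paper: it uses the limit relations (\ref{eq:eta}) (with the same care about the last factor $\eta_{(3n+4)(k+1)+K}$ having limit $e^{(n)}_0(0)$) to identify $\lim_{k\to\infty}\beta^{(n)}_k$ as the product of all $e^{(n)}_i(m)$ in the stated ranges, and then applies the bounds of Lemma \ref{lem:e-property}(1). The only (immaterial) difference is that you bound the product by $(3/5)^{n+2}$ while the paper groups factors in triples to get $(3/5)(9/50)^{n+1}$.
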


\begin{proof}
We use  (\ref{eq:eta}). Note that the limit of $\eta_{(3n+4)(k+1) + K}$ as $k \to \infty$ equals  $e^{(n)}_0(0)$, but not $e^{(n)}_1(n+1)$. Thus, the limit of $\beta^{(n)}_k$ as $k \to \infty$ is described as
\begin{align*}
	\displaystyle \lim_{k \to \infty} \beta^{(n)}_k
		&=e^{(n)}_0(0)  \prod_{j=0}^n \left \{ e^{(n)}_0(j+1)e_2^{(n)}(j)e^{(n)}_1(j) \right \} \\
		&= e^{(n)}_0(n+1) \prod_{j=0}^n \left \{ e^{(n)}_2(j) e^{(n)}_1(j)e^{(n)}_0(j)  \right \} . 
\end{align*}
It follows from Lemma \ref{lem:e-property}(1) that
\begin{gather} \label{eq:tau}
	e^{(n)}_2(m) e^{(n)}_1(m) e^{(n)}_0(m) < \dfrac{3}{4} \cdot \dfrac{2}{5} \cdot \dfrac{3}{5}=\dfrac{9}{50}.  
\end{gather}
This implies
\begin{gather*} 
	\displaystyle \lim_{k \to \infty} \beta^{(n)}_k
				<\dfrac{3}{5} \cdot \left( \dfrac{9}{50} \right)^{n+1}, 
\end{gather*}
whence $\displaystyle \lim_{n \to \infty} \displaystyle \lim_{k \to \infty} \beta^{(n)}_k=0$. 
\end{proof}

Let 
\begin{align*}
	\sigma^{(n)}(m) &= 1+e^{(n)}_2(m)+e^{(n)}_2(m) e^{(n)}_1(m) , \\
	\tau^{(n)}(m) &=e^{(n)}_2(m) e^{(n)}_1(m) e^{(n)}_0(m). 
\end{align*} 
We note that $\sigma^{(n)}(m)$ is the constant $2$ for all $m \ge 0$ by (\ref{eq:e-relation}). 
To describe the limit of $\alpha^{(n)}_k$ as $k \to \infty$ , define $\gamma^{(n)}(m)$ recursively by
\begin{align} \label{eq:gamma}
		\gamma^{(n)}(m)
		&= \sigma^{(n)}(m) + \tau^{(n)}(m) \gamma^{(n)}(m-1)   =  2+  \tau^{(n)}(m) \gamma^{(n)}(m-1) \qquad (m \ge 1), \\
		\gamma^{(n)}(0)&=1+e^{(n)}_2(0) \left \{ 1+e^{(n)}_1(0)(1+e^{(n)}_0(0))\right \} = 2 + \tau^{(n)}(0). \nonumber 
\end{align}
Then the limit of $\alpha^{(n)}_k$ as $k \to \infty$ is represented as    
\begin{gather*}
	\displaystyle \lim_{k \to \infty} \alpha^{(n)}_k = e^{(n)}_0(0) e^{(n)}_0(n+1) \gamma^{(n)}(n).
\end{gather*} 
 By  (\ref{eq:e-gen}), both $\tau^{(n)}(m)$ and $\gamma^{(n)}(m)$ are defined 
for all $n \geq 1$ and all $m \geq 0$.

\begin{lem} \label{lem:gamma}
	The double sequence $\left\{ \gamma^{(n)}(m) \right\}_{n, m}$ has the following properties:
	
		$(1)$ $\left\{ \gamma^{(n)}(m) \right\}_{n, m}$ is bounded. 
		
		$(2)$ There exists the limit $\gamma(m)=\displaystyle \lim_{n \to \infty} \gamma^{(n)}(m)$
			for every $m$. 
		
		$(3)$ $\gamma^{(n)}(m)$ converges uniformly to $\gamma(m)$ as $n \to \infty$. 
		
		$(4)$ There exists the limit $\gamma=\displaystyle \lim_{m \to \infty} \gamma(m)$. 
		
		$(5)$ $\gamma^{(n)}(n)$ converges to $\gamma$ as $n \to \infty$. 
\end{lem}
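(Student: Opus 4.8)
The plan is to prove the five assertions in order, exploiting that each $\gamma^{(n)}(m)$ is built by the linear recursion $\gamma^{(n)}(m) = 2 + \tau^{(n)}(m)\gamma^{(n)}(m-1)$ with a small multiplier $\tau^{(n)}(m)$. The key quantitative input is the uniform bound $\tau^{(n)}(m) = e^{(n)}_2(m)e^{(n)}_1(m)e^{(n)}_0(m) < 9/50$ from \eqref{eq:tau}, which holds for all $n\ge 1$ and all $m\ge 0$ by Lemma \ref{lem:e-property}(1). For (1), I would unwind the recursion to get $\gamma^{(n)}(m) = 2\sum_{j=0}^{m-1}\prod_{i=m-j+1}^{m}\tau^{(n)}(i) + \gamma^{(n)}(0)\prod_{i=1}^{m}\tau^{(n)}(i)$, together with $\gamma^{(n)}(0) = 2 + \tau^{(n)}(0) < 2 + 9/50$; since every factor $\tau^{(n)}(i)$ is at most $9/50$, a geometric-series estimate gives $\gamma^{(n)}(m) < 2/(1 - 9/50) + (2 + 9/50) = 100/41 + 109/50$, a bound independent of both $n$ and $m$. (Any clean explicit constant will do.)

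For (2) and (3), I would argue that the convergence as $n\to\infty$ propagates through the recursion uniformly in $m$. By Lemma \ref{lem:e-property}(3) each of $e^{(n)}_0(m), e^{(n)}_1(m), e^{(n)}_2(m)$ converges uniformly in $m$ as $n\to\infty$, hence so does $\tau^{(n)}(m)$ to a limit $\tau(m) := e_2(m)e_1(m)e_0(m)$, uniformly in $m$, with $\tau(m) \le 9/50$ as well. Define $\gamma(m)$ by the limiting recursion $\gamma(m) = 2 + \tau(m)\gamma(m-1)$, $\gamma(0) = 2 + \tau(0)$. Subtracting the two recursions,
\begin{gather*}
\gamma^{(n)}(m) - \gamma(m) = \tau^{(n)}(m)\bigl(\gamma^{(n)}(m-1) - \gamma(m-1)\bigr) + \bigl(\tau^{(n)}(m) - \tau(m)\bigr)\gamma(m-1),
\end{gather*}
so if $\delta_n := \sup_m |\tau^{(n)}(m) - \tau(m)| \to 0$ and $C$ bounds $\gamma(m)$, then $|\gamma^{(n)}(m) - \gamma(m)| \le \tfrac{9}{50}|\gamma^{(n)}(m-1) - \gamma(m-1)| + C\delta_n$, and induction on $m$ plus summing the geometric series yields $\sup_m|\gamma^{(n)}(m) - \gamma(m)| \le C\delta_n/(1 - 9/50) \to 0$. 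This gives both the existence of the limit $\gamma(m)$ for each $m$ and the uniformity claimed in (3).

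For (4), I would use that $e_i(m)$ converges as $m\to\infty$ by Lemma \ref{lem:e-property}(4), hence $\tau(m)$ converges to some limit $\tau_\infty \le 9/50 < 1$; the recursion $\gamma(m) = 2 + \tau(m)\gamma(m-1)$ is then an asymptotically autonomous contraction, so $\gamma(m)$ is Cauchy — concretely, $|\gamma(m) - \gamma(m-1)| \le \tfrac{9}{50}|\gamma(m-1) - \gamma(m-2)| + C|\tau(m) - \tau(m-1)|$, and since $\sum_m |\tau(m)-\tau(m-1)|$ converges (the limits in Lemma \ref{lem:e-property} are governed by $r^m$, hence the differences decay geometrically) the increments of $\gamma$ are summable — so $\gamma := \lim_{m\to\infty}\gamma(m)$ exists. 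Finally (5) combines (3) and (4): $|\gamma^{(n)}(n) - \gamma| \le \sup_m|\gamma^{(n)}(m) - \gamma(m)| + |\gamma(n) - \gamma|$, and both terms tend to $0$ as $n\to\infty$ — the first by the uniform convergence in (3), the second by (4). The only place demanding genuine care is the uniformity in (3): one must make sure the error constant in the telescoped estimate does not grow with $m$, which is exactly what the uniform bound $\tau^{(n)}(m) < 9/50 < 1$ guarantees.
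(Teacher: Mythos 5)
Your proof is correct, and for parts (1)--(3) and (5) it is essentially the paper's argument: the same bound $\tau^{(n)}(m)<9/50$, the same subtraction of the two recursions with the geometric contraction factor giving uniformity in $m$, and the same triangle inequality $|\gamma^{(n)}(n)-\gamma|\le\sup_m|\gamma^{(n)}(m)-\gamma(m)|+|\gamma(n)-\gamma|$ for (5). The only genuine divergence is in (4): the paper takes $\gamma^-=\limsup_m\gamma(m)$ and $\gamma_-=\liminf_m\gamma(m)$ (finite by (1)) and passes to the limit in $\gamma(m)=2+\tau(m)\gamma(m-1)$ with $\tau=\lim_m\tau(m)=\sqrt{10}-3$, obtaining $\gamma^-\le 2+\tau\gamma^-$ and $\gamma_-\ge 2+\tau\gamma_-$, hence $\gamma_-=\gamma^-=2/(1-\tau)$; you instead run a Cauchy/summable-increments argument, which requires the additional observation that $\sum_m|\tau(m)-\tau(m-1)|<\infty$ --- this is justified, since by \eqref{eq:e-gen} the quantities $e_i(m)$ approach their limits at the geometric rate $r^m$, but it is an extra input the paper's fixed-point argument avoids. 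The paper's version also has the practical advantage of producing the closed form $\gamma=2/(1-\tau)=(4+\sqrt{10})/3$, which is exactly what is invoked in the proof of Lemma \ref{lem:limalpha}; with your route you would recover this value afterwards by letting $m\to\infty$ in the recursion $\gamma(m)=2+\tau(m)\gamma(m-1)$, which is immediate once convergence is known.
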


\begin{proof}
(1)\; Since  $0<e^{(n)}_i(m)<3/4$ 
	for all $n \geq 1, m \geq 0$ and $i = 0,1,2$ by Lemma \ref{lem:e-property}(1), $\gamma^{(n)}(m)$ is estimated as 
	\begin{gather*}
		\gamma^{(n)}(m) 
			<1+\left( \dfrac{3}{4} \right)+\left( \dfrac{3}{4} \right)^2+\left( \dfrac{3}{4} \right)^3+
				\cdots+\left( \dfrac{3}{4} \right)^{3m+3} < 4
	\end{gather*}
	
(2)\; We prove this by induction. By Lemma \ref{lem:e-property}(3), 
	there exist 
	\begin{gather*}
	\tau(m)=\displaystyle \lim_{n \to \infty} \tau^{(n)}(m)
	\end{gather*}
	 for every $m \geq 0$, and hence 
	\begin{gather*}
		\lim_{n \to \infty} \gamma^{(n)}(0)= 2 + \tau(0).
	\end{gather*}
	Assume that $\gamma^{(n)}(m-1)$ converges to 
	$\gamma(m-1)$ as $n \to \infty$ for $m \ge 1$.  Then $\gamma^{(n)}(m)=2+\tau^{(n)}(m) \gamma^{(n)}(m-1)$ 
	converges to $2+\tau(m) \gamma(m-1)$ as $n \to \infty$. 

(3)\; We estimate  
	\begin{gather*}
		\gamma(m)-\gamma^{(n)}(m)
			= \left( \tau(m)-\tau^{(n)}(m) \right) \gamma(m-1) 
			 +\tau^{(n)}(m) \left( \gamma(m-1)-\gamma^{(n)}(m-1) \right) . 
	\end{gather*}
	Since $\tau^{(n)}(m)$  converges uniformly to $\tau(m)$ as $n \to \infty$  by Lemma \ref{lem:e-property}(3), 
	for an arbitrary $\varepsilon > 0$, there exists $N$ such that 
	\begin{gather*}
		\left| \tau(m)-\tau^{(n)}(m) \right| < \varepsilon 
	\end{gather*}
	holds for all $n \geq N$ and all $m \geq 0$.  
	Since $\left| \gamma(m-1) \right|\leq  4$ by (1) 
	and $\left| \tau^{(n)}(m) \right|<9/50$ by (\ref{eq:tau}), 
	we have, for all $n \geq N$ and $m \geq 0$, 
	\begin{align*}
		\left| \gamma(m)-\gamma^{(n)}(m) \right|
			&< 4\varepsilon 
				+\dfrac{9}{50} \left| \gamma(m-1)-\gamma^{(n)}(m-1) \right| \\
			&<4\varepsilon +\dfrac{9}{50} 
				\left( 4\varepsilon +\dfrac{9}{50} \left| \gamma(m-2)-\gamma^{(n)}(m-2) \right| \right) \\
			&<4\varepsilon \left( 1+\dfrac{9}{50}+\left( \dfrac{9}{50} \right)^2+\cdots
				+ \left( \dfrac{9}{50} \right)^{m-1} \right) 
				+\left( \dfrac{9}{50} \right)^m \left| \gamma(0)-\gamma^{(n)}(0) \right| \\
			& < 4\varepsilon \cdot \dfrac{50}{41}+\left| \gamma(0)-\gamma^{(n)}(0) \right|. 
	\end{align*}
	This implies that $\lim_{n \to \infty} \gamma^{(n)}(m)=\gamma(m)$ is  uniformly 
	convergent with respect to $m$. 

(4)\; Consider the recurrence $\gamma(m)=2+\tau(m) \gamma(m-1)$. 
	By Lemma \ref{lem:e-property} (4),  we have
	\begin{gather*}
	 \tau= \lim_{m \to \infty} \tau(m) = \displaystyle \lim_{m \to \infty} e_2(m) e_1(m) e_0(m)=-3+\sqrt{10} . 
    \end{gather*}
		Since the sequence $\left\{ \gamma^{(n)}(m) \right\}_{n, m }$ is bounded by (1), 
	both 
	\begin{gather*}
	\gamma^{-}=\displaystyle \limsup_{m \to \infty} \gamma(m) \quad  \text{and} \quad 
	\gamma_{-}=\displaystyle \liminf_{m \to \infty} \gamma(m)
	\end{gather*}
	are finite positive numbers. 
	Then we have 
	\begin{align*}
		\gamma^{-} & =\displaystyle \limsup_{m \to \infty} \left( 2+\tau(m) \gamma(m-1) \right)
			\leq 2+\tau \gamma^{-} \\
		\gamma_{-} &=\displaystyle \liminf_{m \to \infty} \left( 2+\tau(m) \gamma(m-1) \right)
			\geq 2+\tau \gamma_{-} . 
	\end{align*}
	This implies 
	\begin{gather*}
		\dfrac{2}{1-\tau} \leq \gamma_{-} \leq \gamma^{-} \leq \dfrac{2}{1-\tau}. 
	\end{gather*}
	Therefore $\gamma(m)$ converges to $\gamma = 2/(1 - \tau)$ as $m \to \infty$. 
	
(5)\; By (3) and (4), the estimate
	\begin{gather*}
		\left| \gamma-\gamma^{(n)}(n) \right| \leq 
				\left| \gamma-\gamma(n) \right|+\left| \gamma(n)-\gamma^{(n)}(n) \right|
	\end{gather*}
	gives  $\displaystyle \lim_{n \to \infty} \gamma^{(n)}(n)=\gamma$. 
\end{proof}

\begin{lem} \label{lem:limalpha}
	$\displaystyle \lim_{n \to \infty} \displaystyle \lim_{k \to \infty} \alpha^{(n)}_k=(10+\sqrt{10})/15$. 
\end{lem}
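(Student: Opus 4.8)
The plan is to combine the closed form for $\lim_{k\to\infty}\alpha^{(n)}_k$ recorded just above the statement with the limit computations already carried out in Lemmas \ref{lem:e-property} and \ref{lem:gamma}. Recall that
\begin{gather*}
\lim_{k\to\infty}\alpha^{(n)}_k = e^{(n)}_0(0)\,e^{(n)}_0(n+1)\,\gamma^{(n)}(n),
\end{gather*}
so it suffices to let $n\to\infty$ in each of the three factors on the right. By Lemma \ref{lem:e-property}(2) we have $e^{(n)}_0(0)\to e_0=\sqrt{2}/\sqrt{5}$ and $e^{(n)}_0(n+1)\to(-2+\sqrt{10})/2$, and by Lemma \ref{lem:gamma}(5) we have $\gamma^{(n)}(n)\to\gamma$. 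In particular the iterated limit exists, since each factor converges and $\lim_{k\to\infty}\alpha^{(n)}_k$ is already a genuine limit by the displayed closed form.

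Next I would make the value of $\gamma$ explicit. By Lemma \ref{lem:gamma}(4), $\gamma=2/(1-\tau)$ where $\tau=\lim_{m\to\infty}\tau(m)=-3+\sqrt{10}$, the value of $\tau$ coming from Lemma \ref{lem:e-property}(4); hence $1-\tau=4-\sqrt{10}$ and $\gamma=2/(4-\sqrt{10})$.

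Finally, substituting and simplifying,
\begin{align*}
\lim_{n\to\infty}\lim_{k\to\infty}\alpha^{(n)}_k
&=\frac{\sqrt{2}}{\sqrt{5}}\cdot\frac{-2+\sqrt{10}}{2}\cdot\frac{2}{4-\sqrt{10}}
=\frac{\sqrt{10}}{5}\cdot\frac{-2+\sqrt{10}}{4-\sqrt{10}}
=\frac{10-2\sqrt{10}}{20-5\sqrt{10}}=\frac{10+\sqrt{10}}{15},
\end{align*}
where the last equality is obtained by rationalizing the denominator (multiplying by $20+5\sqrt{10}$). Since every step is either an invocation of a convergence statement proved earlier or a one-line surd manipulation, there is no real obstacle here; the only point demanding a little care is bookkeeping in the final algebraic simplification.
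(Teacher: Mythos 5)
Your proposal is correct and follows essentially the same route as the paper: factor $\lim_{k\to\infty}\alpha^{(n)}_k = e^{(n)}_0(0)\,e^{(n)}_0(n+1)\,\gamma^{(n)}(n)$, pass to the limit in each factor via Lemma \ref{lem:e-property}(2) and Lemma \ref{lem:gamma} (with $\gamma = 2/(1-\tau)$, $\tau=-3+\sqrt{10}$), and simplify. The surd arithmetic checks out, so nothing further is needed.
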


\begin{proof}
By the proof of Lemma \ref{lem:gamma} (4), 
\begin{gather*}
	\displaystyle \lim_{n \to \infty} \gamma^{(n)}(n)= \dfrac{2}{1-\tau}
	=\dfrac{4+\sqrt{10}}{3} . 
\end{gather*}
Combining this with Lemma \ref{lem:e-property} (2), we obtain
\begin{gather*}
	\displaystyle \lim_{n \to \infty} \displaystyle \lim_{k \to \infty} \alpha^{(n)}_k 
		=\displaystyle \lim_{n \to \infty} e^{(n)}_0(0) e^{(n)}_0(n+1) \gamma^{(n)}(n) 
		=\dfrac{10+\sqrt{10}}{15} . 
\end{gather*}
\end{proof}

\begin{proof}[Proof of Proposition \ref{prop:appendix}]
Since $\eta^{(n)}_j \le 3/4$ for all $j  > K+1$ by the definition of $\theta^{(n)}$,  the sequence $\left\{ s^{(n)}_k \right\}_k$ is bounded, so 
both $\displaystyle \limsup_{k \to \infty} s^{(n)}_k$ and 
	$\displaystyle \liminf_{k \to \infty} s^{(n)}_k$ are finite positive numbers. 
By $s^{(n)}_k = \alpha^{(n)}_k + \beta^{(n)}_k s^{(n)}_{k-1}$ and the same argument as in the proof of Lemma \ref{lem:gamma} (4), we obtain  
\begin{gather*}
		\displaystyle \limsup_{k \to \infty} s^{(n)}_k  
		=\displaystyle \liminf_{k \to \infty} s^{(n)}_k
		=\dfrac{ \displaystyle \lim_{k \to \infty} \alpha^{(n)}_k}
			{1- \displaystyle \lim_{k \to \infty} \beta^{(n)}_k}.
\end{gather*}
From Lemmas \ref{lem:limbeta} and \ref{lem:limalpha}, it follows 
\begin{gather*}
	\displaystyle \lim_{n \to \infty} \displaystyle \lim_{k \to \infty} t_{(3n+4)(k+1)+K}
		= \displaystyle \lim_{n \to \infty} \displaystyle \lim_{k \to \infty} s^{(n)}_k
		=\dfrac{10+\sqrt{10}}{15} . 
\end{gather*}
By the definition of $\zeta^{(n)}_0$, we conclude
\begin{gather*}
	\displaystyle \lim_{n \to \infty} \zeta^{(n)}_{0}
		=1+\dfrac{1+e_0}{\frac{10+\sqrt{10}}{15}+1+e_0} 
		=\dfrac{48+\sqrt{10}}{31} . 
\end{gather*}
\end{proof}

Step 2 is completed. We proceed to Step 3. 

\begin{lem} \label{lem:memo2}
	Let $\left\{ \psi^{(n)}_0(m) \right\}_{n, m}, \left\{ \psi^{(n)}_1(m) \right\}_{n, m}, \cdots, 
	\left\{ \psi^{(n)}_k(m) \right\}_{n, m}$ be a family of $(k+1)$ double sequences of real numbers. 
	Let 
	\begin{gather*}
		r_n=\displaystyle \min_{0 \leq m \leq n} 
				\left\{ \psi^{(n)}_0(m), \psi^{(n)}_1(m), \cdots, \psi^{(n)}_k(m) \right\} . 
	\end{gather*}
	Assume that $\left\{ \psi^{(n)}_0(m) \right\}_{n, m}, \cdots, \left\{ \psi^{(n)}_k(m) \right\}_{n, m}$
	are satisfying
	\begin{enumerate}
		\item[$(\text{\romannumeral 1)}$] for every $j=0, 1, \cdots, k$ and all $m \geq 0$, 
			there exists $\displaystyle \lim_{n \to \infty} \psi^{(n)}_j(m)=\psi_j(m)$ 
			and this convergence is uniform with respect to $m$, 
		\item[$\text{(\romannumeral 2)}$] there exists a constant $\delta>0$ such that
			both $\psi_0(0)+\delta \leq \psi_0(m+1)$ and $\psi_0(0)+\delta \leq \psi_j(m)$ hold 
			for all $j=1, 2, \cdots, k$ and all $m \geq 0$. 
	\end{enumerate}
	Then we have $r_n=\psi^{(n)}_0(0)$ for all sufficiently large $n$. 
	In particular, $ r_n$ converges to $\psi_0(0)$ as $n \to \infty$. 
\end{lem}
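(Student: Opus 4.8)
The plan is to combine the uniform convergence in hypothesis (i) with the uniform gap $\delta$ in hypothesis (ii) so as to show that, once $n$ is large, the entry $\psi^{(n)}_0(0)$ is strictly dominated by every other entry $\psi^{(n)}_j(m)$ that occurs in the minimum defining $r_n$. Once that is established, the minimum is $r_n=\psi^{(n)}_0(0)$, and the last assertion follows immediately from the case $m=0$ of (i).

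First I would fix the tolerance. By hypothesis (i), for each $j=0,1,\cdots,k$ there is an integer $N_j$ such that $|\psi^{(n)}_j(m)-\psi_j(m)|<\delta/3$ for all $n\ge N_j$ and all $m\ge 0$; put $N=\max(N_0,\cdots,N_k)$ and fix any $n\ge N$. On the one hand, $\psi^{(n)}_0(0)<\psi_0(0)+\delta/3$. On the other hand, for an index $(0,m)$ with $1\le m\le n$, hypothesis (ii) gives $\psi_0(m)\ge\psi_0(0)+\delta$, hence
\[
\psi^{(n)}_0(m)>\psi_0(m)-\tfrac{\delta}{3}\ge\psi_0(0)+\tfrac{2\delta}{3}>\psi_0(0)+\tfrac{\delta}{3}>\psi^{(n)}_0(0),
\]
and for $j\in\{1,\cdots,k\}$ and any $m$ with $0\le m\le n$, hypothesis (ii) gives $\psi_j(m)\ge\psi_0(0)+\delta$, so the same chain of inequalities yields $\psi^{(n)}_j(m)>\psi^{(n)}_0(0)$. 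Thus among the finitely many numbers $\psi^{(n)}_j(m)$ with $0\le j\le k$ and $0\le m\le n$, the strictly smallest is the one with $(j,m)=(0,0)$, i.e. $r_n=\psi^{(n)}_0(0)$ for every $n\ge N$. Finally, applying (i) with $m=0$ gives $\psi^{(n)}_0(0)\to\psi_0(0)$, hence $r_n\to\psi_0(0)$ as $n\to\infty$.

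The computation is entirely elementary; the only point requiring care — and what I would regard as the crux of the statement — is the interplay of the two uniformity conditions. The convergence in (i) must be uniform in $m$ and the gap in (ii) must be a single constant valid for all $m$ precisely because the range of $m$ in the minimum grows with $n$: if the threshold $N$ were allowed to depend on $m$, or if the gap were allowed to shrink with $m$, one could not pin down $r_n$ for all large $n$ simultaneously. Once the uniform $N$ above is in hand there is nothing more to do.
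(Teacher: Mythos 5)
Your proof is correct and follows essentially the same argument as the paper: use the uniform-in-$m$ convergence to fix a single threshold $N$, then play the uniform gap $\delta$ (with $\delta/3$ where the paper uses $\delta/2$) against the approximation error to see that $\psi^{(n)}_0(0)$ beats every other entry in the minimum. Your closing remark about why both uniformities are essential, given that the range of $m$ grows with $n$, is exactly the right point.
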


\begin{proof}
By the assumption (\romannumeral 1), 
there exists a sufficiently large $N > 0$ such that 
\begin{gather*}
	\left|\psi_j^{(n)}(m) - \psi_j(m) \right| < \delta/2
\end{gather*}
holds for all $n \geq N$, $m \geq 0$ and $j = 0, \cdots, k$. 
Then, by the assumption (\romannumeral 2), we have
\begin{gather*}
	\psi_0^{(n)}(0) < \psi_0(0) + \frac{\delta}{2}
		\leq \psi_0(m+1) - \frac{\delta}{2}
		< \psi_0^{(n)}(m+1)
\end{gather*}
and
\begin{gather*}
	\psi_0^{(n)}(0) < \psi_0(0) + \frac{\delta}{2}
		\leq \psi_j(m) - \frac{\delta}{2}
		< \psi_j^{(n)}(m)
\end{gather*}
for all $j = 1, \cdots, k$ and $m \geq 0$. 
Therefore, $r_n$ is equal to $\psi_0^{(n)}(0)$ when $n \geq N$.
\end{proof}

Recall $\zeta^{(n)}_{i}$ and $\xi^{(n)}_{i}$:
\begin{align*}
	\zeta^{(n)}_{i}&=\liminf_{k \to \infty} 
			\left( 1+\dfrac{1+\eta^{(n)}_{(3n+4)k+i+K}}{t^{(n)}_{(3n+4)k+i+K}+1+\eta^{(n)}_{(3n+4)k+i+K}} \right) , \\
	\xi^{(n)}_{i}&=\liminf_{k \to \infty} 
			\left( 1+\dfrac{t^{(n)}_{(3n+4)k+i+K}+\eta^{(n)}_{(3n+4)k+i+K}}{1+\eta^{(n)}_{(3n+4)k+i+K}} \right) . 
\end{align*}
To compute $\zeta^{(n)}_{i}$ and $\xi^{(n)}_{i}$, we partition $i$ into 3 classes by modulo 3. 
First we consider the case $i = 3m$. Define $s^{(n)}_{0,k}(m)$, $\alpha^{(n)}_{0,k}(m)$ and $\beta^{(n)}_{0,k}(m)$ similarly as $s^{(n)}_k$, $\alpha^{(n)}_k$ and $\beta^{(n)}_k$ in  (\ref{def:beta}), namely 
\begin{align}
	\begin{aligned} \label{def:0mod3}
	s^{(n)}_{0,k}(m)=&  \sum_{\ell = 0}^{(3n+4)(k+1)+3m - 1} \prod_{j = \ell}^{(3n+4)(k+1) + 3m} \eta^{(n)}_{j+K}, \\
	\alpha^{(n)}_{0,k}(m)=& \sum_{\ell = 0}^{3n+3} \prod_{j = \ell}^{3n+4 } \eta^{(n)}_{(3n+4)k  + 3m + j + K}, \qquad 	\beta^{(n)}_{0,k}(m)= \prod_{j = 1}^{3n+4} \eta^{(n)}_{(3n+4)k + 3m +  j + K} . 
	\end{aligned}
\end{align}
Then the limit of $t^{(n)}_{(3n+4)k+3m+K}$ as $k \to \infty$ is represented as  
\begin{gather*}
	\displaystyle \lim_{k \to \infty} t^{(n)}_{(3n+4)k+3m+K} 
	= \frac{\displaystyle\lim_{k \to \infty} \alpha^{(n)}_{0,k}(m)}
		{1 - \displaystyle\lim_{k \to \infty} \beta^{(n)}_{0,k}(m)} 
\end{gather*}
by the same argument as in Proof of Proposition \ref{prop:appendix}. We describe
\begin{gather*}
 \lim_{k \to \infty} \beta^{(n)}_{0,k}(m) = \prod_{j=1}^{3n+4} \lim_{k \to \infty} \eta^{(n)}_{(3n+4)k + 3m +  j + K} 
\end{gather*}
by using (\ref{eq:eta}). Here we note that 
\begin{gather*}
\lim_{k \to \infty} \eta^{(n)}_{(3n+4)k + 3m + 3(n-m+4) + K} = e^{(n)}_0(0) \ne e^{(n)}_1(n+1) , \\
\lim_{k \to \infty} \eta^{(n)}_{(3n+4)k + 3m + 3(n-m+4) + 1 + K} = e^{(n)}_1(0) \ne e^{(n)}_2(n+1) ,
\end{gather*}
and so on. 
Let 
\begin{gather*}
\tau^{(n)}_0(m) = e^{(n)}_0(m) e^{(n)}_2(m-1) e^{(n)}_1(m-1)
\end{gather*}
and 
\begin{gather*}
\sigma^{(n)}_0(m) = e^{(n)}_0(m) e^{(n)}_2(m-1) + \tau^{(n)}_0(m) + \tau^{(n)}_0(m) e^{(n)}_0(m-1).
\end{gather*}
Then we obtain 
\begin{gather*}
 \nu^{(n)}_0 := \lim_{k \to \infty} \beta^{(n)}_{0,k}(m) =  \tau^{(n)}_0(n+1) \tau^{(n)}_0(n) \tau^{(n)}_0(n-1)\cdots \tau^{(n)}_0(1) e^{(n)}_0(0),
\end{gather*}
which is not depend on $m$. 
To describe $\displaystyle \lim_{k \to \infty} \alpha^{(n)}_{0,k}(m)$, we need more notations. For a finite sequence $\{ c_j\}_{j=1}^N$ of real numbers, we set
\begin{gather*}
\Lambda(\{c_j\}) = \sum_{\ell =2}^N \prod_{j = 1}^\ell c_j  = c_1c_2 + c_1c_2c_3 + \cdots + c_1c_2 \cdots c_N. 
\end{gather*}
Let $E^{n,m}_0$ be the sequence of length $3n+5$ whose first $3m+1$ terms are 
\begin{gather*} e^{(n)}_0(m), e^{(n)}_2(m-1), e^{(n)}_1(m-1), e^{(n)}_0(m-1), e^{(n)}_2(m-2), \cdots, e^{(n)}_0(1), e^{(n)}_2(0), e^{(n)}_1(0), e^{(n)}_0(0) 
\end{gather*}
and remaining terms are 
\begin{gather*}
e^{(n)}_0(n+1), e^{(n)}_2(n), e^{(n)}_1(n), e^{(n)}_0(n), e^{(n)}_2(n-1), \cdots, e^{(n)}_0(m+1), e^{(n)}_2(m), e^{(n)}_1(m), e^{(n)}_0(m) .
\end{gather*}
Then it follows from the definition of  $\alpha^{(n)}_{0,k}(m)$ that 
\begin{gather*}
\lambda^{(n)}_0(m) := \lim_{k \to \infty} \alpha^{(n)}_{0,k}(m) = \Lambda(E^{n,m}_0) .
\end{gather*}
We note  that $\lambda^{(n)}_0(0) = e^{(n)}_0(0)e^{(n)}_0(n+1)\gamma^{(n)}(n)$. 
Let $\chi^{(n)}_0(m-1)$ be the last three terms of $\lambda^{(n)}_0(m-1)$, namely 
\begin{align*}	\chi^{(n)}_0(m-1)
		&=e^{(n)}_0(m-1) e^{(n)}_2(m-2) \cdots
			e^{(n)}_0(0) e^{(n)}_0(n+1) e^{(n)}_2(n) \cdots 
			e^{(n)}_0(m) e^{(n)}_2(m-1) \\
		&\quad +e^{(n)}_0(m-1) e^{(n)}_2(m-2) \cdots
			e^{(n)}_0(0) e^{(n)}_0(n+1) e^{(n)}_2(n) \cdots 
			e^{(n)}_2(m-1) e^{(n)}_1(m-1) \\
		&\quad +e^{(n)}_0(m-1) e^{(n)}_2(m-2) \cdots
			e^{(n)}_0(0) e^{(n)}_0(n+1) e^{(n)}_2(n) \cdots 
			e^{(n)}_1(m-1) e^{(n)}_0(m-1) 	 .
\end{align*}
Then $\lambda^{(n)}_0(m)$ satisfies  
\begin{gather}\label{eq:lambda0-recurrence}
	\lambda^{(n)}_0(m) = \sigma^{(n)}_0(m)+\tau^{(n)}_0(m)  \left ( \lambda^{(n)}_0(m-1)
		- \chi^{(n)}_0(m-1) \right ) .  
\end{gather}
By using these notations, both $\zeta^{(n)}_{3m}$ and $\xi^{(n)}_{3m}$ are described by
\begin{gather*}
	\zeta^{(n)}_{3m}
			= 1+\dfrac{1+e^{(n)}_0(m)}{\frac{\lambda^{(n)}_0(m)}{1 - \nu^{(n)}_0}+1+e^{(n)}_0(m)} , \qquad 
	\xi^{(n)}_{3m}
		= 1+\dfrac{\frac{\lambda^{(n)}_0(m)}{1 - \nu^{(n)}_0}+e^{(n)}_0(m)}{1+e^{(n)}_0(m)}
\end{gather*}
for $m = 0, 1, \cdots, n+1$. 

By a similar fashion ,we can describe both $\zeta^{(n)}_i$ and $\xi^{(n)}_i$ for  $i = 3m+1 , 3m+2$.  We state only the conclusion. Let $E^{n,m}_1$ be the sequence of length $3n+5$ whose first $3m+2$ terms are 
\begin{gather*} e^{(n)}_1(m), e^{(n)}_0(m), e^{(n)}_2(m-1), e^{(n)}_1(m-1), e^{(n)}_0(m-1), \cdots, e^{(n)}_0(1), e^{(n)}_2(0), e^{(n)}_1(0), e^{(n)}_0(0) 
\end{gather*}
and remaining terms are 
\begin{gather*}
e^{(n)}_0(n+1), e^{(n)}_2(n), e^{(n)}_1(n), e^{(n)}_0(n), e^{(n)}_2(n-1), \cdots, e^{(n)}_0(m+1), e^{(n)}_2(m), e^{(n)}_1(m) .
\end{gather*}
Let
\begin{align*}
	\tau^{(n)}_1(m) &= e^{(n)}_1(m) e^{(n)}_0(m) e^{(n)}_2(m-1) , \\
	\sigma^{(n)}_1(m) &= e^{(n)}_1(m) e^{(n)}_0(m)
			+\tau^{(n)}_1(m) + \tau^{(n)}_1(m) e^{(n)}_1(m) \\
	\nu^{(n)}_1 & = \tau^{(n)}_1(n)\tau^{(n)}_1(n-1) \cdots \tau^{(n)}_1(1)
		e^{(n)}_1(0)e^{(n)}_0(0)e^{(n)}_0(n+1)e^{(n)}_2(n) \\
	\chi^{(n)}_1(m-1)
		&=e^{(n)}_1(m) e^{(n)}_0(m) \cdots
			e^{(n)}_0(0) e^{(n)}_0(n+1) e^{(n)}_2(n) \cdots 
			e^{(n)}_1(m) e^{(n)}_0(m) \\
		&\quad +e^{(n)}_1(m) e^{(n)}_0(m)  \cdots
			e^{(n)}_0(0) e^{(n)}_0(n+1) e^{(n)}_2(n) \cdots 
			e^{(n)}_1(m) e^{(n)}_0(m) e^{(n)}_2(m-1) \\
		&\quad +e^{(n)}_1(m) e^{(n)}_0(m)  \cdots
			e^{(n)}_0(0) e^{(n)}_0(n+1) e^{(n)}_2(n) \cdots 
			e^{(n)}_0(m) e^{(n)}_2(m-1) e^{(n)}_1(m-1) .
\end{align*}
Similarly, let $E^{n,m}_2$ be the sequence of length $3n+5$ whose first $3m + 3$ terms are 
\begin{gather*} e^{(n)}_2(m), e^{(n)}_1(m), e^{(n)}_0(m), e^{(n)}_2(m-1), e^{(n)}_1(m-1), \cdots, e^{(n)}_0(1), e^{(n)}_2(0), e^{(n)}_1(0), e^{(n)}_0(0) 
\end{gather*}
and remaining terms are 
\begin{gather*}
e^{(n)}_0(n+1), e^{(n)}_2(n), e^{(n)}_1(n), e^{(n)}_0(n), e^{(n)}_2(n-1), \cdots, e^{(n)}_0(m+1), e^{(n)}_2(m) .
\end{gather*}
Let
\begin{align*}
    \tau^{(n)}_2(m) &= e^{(n)}_2(m) e^{(n)}_1(m) e^{(n)}_0(m) , \\
	\sigma^{(n)}_2(m) &= e^{(n)}_2(m) e^{(n)}_1(m)
			+ \tau^{(n)}_2(m) + \tau^{(n)}_2(m) e^{(n)}_2(m-1) , \\
	\nu^{(n)}_2 & = \tau^{(n)}_2(n)\tau^{(n)}_2(n-1) \cdots \tau^{(n)}_2(0)
			e^{(n)}_0(n+1) \\
	\chi^{(n)}_2(m-1)
		&=e^{(n)}_2(m) e^{(n)}_1(m) \cdots
			e^{(n)}_0(0) e^{(n)}_0(n+1) e^{(n)}_2(n) \cdots 
			e^{(n)}_2(m) e^{(n)}_1(m) \\
		&\quad +e^{(n)}_2(m) e^{(n)}_1(m)  \cdots
			e^{(n)}_0(0) e^{(n)}_0(n+1) e^{(n)}_2(n) \cdots 
			e^{(n)}_2(m) e^{(n)}_1(m) e^{(n)}_0(m) \\
		&\quad +e^{(n)}_2(m) e^{(n)}_1(m)  \cdots
			e^{(n)}_0(0) e^{(n)}_0(n+1) e^{(n)}_2(n) \cdots 
			e^{(n)}_1(m) e^{(n)}_0(m) e^{(n)}_2(m-1)  .
\end{align*}
Then we have
\begin{gather*}		
	\zeta^{(n)}_{3m+j}  =  1+\dfrac{1+e^{(n)}_j(m)}{\frac{\lambda^{(n)}_j(m)}{1 - \nu^{(n)}_j}+1+e^{(n)}_j(m)},  \qquad 
	\xi^{(n)}_{3m+ j} 
		= 1+\dfrac{\frac{\lambda^{(n)}_j(m)}{1 - \nu^{(n)}_j}+e^{(n)}_j(m)}{1+e^{(n)}_j(m)}		 
\end{gather*}
for $j = 1,2$ and $m = 0, 1, \cdots n$, where $\lambda^{(n)}_j(m)$ denotes $\Lambda(E^{n,m}_j)$.  This $\lambda^{(n)}_j(m)$ satisfies
\begin{gather}\label{eq:lambda12-recurrence}
	\lambda^{(n)}_j(m) = \sigma^{(n)}_j(m)+\tau^{(n)}_j(m)  \left (\lambda^{(n)}_j(m-1)
		- \chi^{(n)}_j(m-1) \right ).
\end{gather}

To apply Lemma \ref{lem:memo2}, we put 
\begin{gather*}
	\varphi^{(n)}_j(m) = \zeta^{(n)}_{3m+j}, \qquad 	\Phi^{(n)}_j(m) = \xi^{(n)}_{3m+j}
\end{gather*}
for $j = 0,1,2$. 

\begin{lem} \label{lem:asm(1)}
	For $j=0,1, 2$ and every $m \geq 0$, 
	there exists 
	\begin{gather*}
		\lim_{n \to \infty} \varphi^{(n)}_j(m)=\varphi_j(m),\quad 
		\lim_{n \to \infty} \Phi^{(n)}_j(m)=\Phi_j(m) , 
	\end{gather*}
	and these convergences are uniform with respect to $m$. 
\end{lem}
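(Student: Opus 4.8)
The plan is to write $\varphi^{(n)}_j(m)$ and $\Phi^{(n)}_j(m)$ as explicit rational functions of the three ingredients $e^{(n)}_j(m)$, $\lambda^{(n)}_j(m)$ and $\nu^{(n)}_j$, and then to verify that each ingredient converges, uniformly in $m$ as $n\to\infty$, while the relevant denominators stay bounded away from $0$. The uniform convergence $e^{(n)}_j(m)\to e_j(m)$ is Lemma~\ref{lem:e-property}(3), and the uniform bounds $0<e^{(n)}_0(m)<3/5$, $0<e^{(n)}_1(m)<2/5$, $0<e^{(n)}_2(m)<3/4$ of Lemma~\ref{lem:e-property}(1) will be used repeatedly.

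First I would dispose of the two quantities that are exponentially small in $n$. Each $\tau^{(n)}_i(m)$ is a product of one factor $e^{(n)}_0$, one factor $e^{(n)}_1$ and one factor $e^{(n)}_2$, hence $\tau^{(n)}_i(m)<(3/5)(2/5)(3/4)=9/50$; since $\nu^{(n)}_j$ is a product of at least $n$ such $\tau$'s together with a bounded number of further $e$'s, we get $\nu^{(n)}_j<(9/50)^{n}$, so $\nu^{(n)}_j\to 0$, and in particular $1-\nu^{(n)}_j$ is bounded below by a positive constant independent of $n$. Similarly, each of the three summands of $\chi^{(n)}_j(m-1)$ is a product of at least $3n+3$ factors lying in $(0,3/4)$, so $0\le\chi^{(n)}_j(m-1)<3\,(3/4)^{3n+3}$, and therefore $\chi^{(n)}_j(m)\to 0$ as $n\to\infty$, uniformly in $m$.

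The substantive point is the uniform convergence of $\lambda^{(n)}_j(m)$, which I would establish exactly as in the proof of Lemma~\ref{lem:gamma}. Since $\lambda^{(n)}_j(m)=\Lambda(E^{n,m}_j)$ with all entries of $E^{n,m}_j$ in $(0,3/4)$ and $|E^{n,m}_j|=3n+5$, one has the uniform bound $0<\lambda^{(n)}_j(m)<\sum_{k\ge 2}(3/4)^k=9/4$; moreover $\sigma^{(n)}_j(m)$ is a fixed polynomial in finitely many of the $e^{(n)}_\cdot(\cdot)$, so it converges uniformly in $m$ to some $\sigma_j(m)$ by Lemma~\ref{lem:e-property}(3) and is uniformly bounded, and $\tau^{(n)}_j(m)<9/50$ uniformly. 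The base cases converge: $\lambda^{(n)}_0(0)=e^{(n)}_0(0)\,e^{(n)}_0(n+1)\,\gamma^{(n)}(n)$ converges by Lemmas~\ref{lem:e-property}(2) and \ref{lem:gamma}(5), and $\lambda^{(n)}_1(0)$, $\lambda^{(n)}_2(0)$ admit analogous product expressions which converge for the same reasons. Feeding the recursions (\ref{eq:lambda0-recurrence}) and (\ref{eq:lambda12-recurrence}), written as $\lambda^{(n)}_j(m)=\sigma^{(n)}_j(m)+\tau^{(n)}_j(m)\bigl(\lambda^{(n)}_j(m-1)-\chi^{(n)}_j(m-1)\bigr)$, into the telescoping estimate of Lemma~\ref{lem:gamma}(3): for $\varepsilon>0$ choose $N$ with $|\sigma^{(n)}_j(m)-\sigma_j(m)|<\varepsilon$ and $\chi^{(n)}_j(m)<\varepsilon$ for all $n\ge N$ and all $m$; then iterating the recursion bounds $|\lambda^{(n)}_j(m)-\lambda_j(m)|$ by a fixed multiple of $\varepsilon$ plus $(9/50)^m|\lambda^{(n)}_j(0)-\lambda_j(0)|$, so $\lambda^{(n)}_j(m)\to\lambda_j(m)$ uniformly in $m$.

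It remains to combine these facts. The identities
\begin{gather*}
\varphi^{(n)}_j(m)=1+\dfrac{1+e^{(n)}_j(m)}{\frac{\lambda^{(n)}_j(m)}{1-\nu^{(n)}_j}+1+e^{(n)}_j(m)},\qquad
\Phi^{(n)}_j(m)=1+\dfrac{\frac{\lambda^{(n)}_j(m)}{1-\nu^{(n)}_j}+e^{(n)}_j(m)}{1+e^{(n)}_j(m)}
\end{gather*}
exhibit $\varphi^{(n)}_j(m)$ and $\Phi^{(n)}_j(m)$ as rational functions of $e^{(n)}_j(m)$, $\lambda^{(n)}_j(m)$ and $\nu^{(n)}_j$ whose denominators are $\ge 1$; since all three arguments converge uniformly in $m$ (the first two by the above, the $m$-independent $\nu^{(n)}_j$ trivially) and the denominators stay bounded below, the values converge uniformly in $m$, and we define $\varphi_j(m)$, $\Phi_j(m)$ by substituting the limits $e_j(m)$, $\lambda_j(m)$, $0$. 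The only step that is not entirely mechanical is keeping the $\lambda$-estimate uniform in $m$ through the recursion; as in Lemma~\ref{lem:gamma}(3) this works precisely because the correction $\chi^{(n)}_j(m-1)$ is exponentially small in $n$, uniformly in $m$.
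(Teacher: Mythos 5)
Your proposal is correct and follows essentially the same route as the paper: express $\varphi^{(n)}_j(m),\Phi^{(n)}_j(m)$ through $e^{(n)}_j(m)$, $\lambda^{(n)}_j(m)$, $\nu^{(n)}_j$, show $\nu^{(n)}_j$ and $\chi^{(n)}_j(m-1)$ are exponentially small in $n$ uniformly in $m$, and run the recursion for $\lambda^{(n)}_j(m)$ through the boundedness and telescoping arguments of Lemma \ref{lem:gamma}. The only deviations are cosmetic: you bound $\chi^{(n)}_j(m-1)$ by directly counting that each summand has at least $3n+3$ factors in $(0,3/4)$, where the paper instead rewrites it via $\prod_\ell \tau^{(n)}_0(\ell)$ and invokes Lemma \ref{lem:e-decrease}, and you make the convergence of the base cases $\lambda^{(n)}_j(0)$ slightly more explicit.
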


\begin{proof}
We prove the case $j = 0$. 
By Lemma \ref{lem:e-property}(3), 
\begin{gather*}
	e_0(m)=\displaystyle \lim_{n \to \infty} e^{(n)}_0(m), \quad 
	e_1(m)=\displaystyle \lim_{n \to \infty} e^{(n)}_1(m), \quad  
	e_2(m)=\displaystyle \lim_{n \to \infty} e^{(n)}_2(m)
\end{gather*}
are uniformly convergent with respect to $m$. 
Hence, both 
\begin{gather*}
	\sigma_0(m)=\displaystyle \lim_{n \to \infty} \sigma^{(n)}_0(m) 
	\qquad \text{and} \qquad 
	\tau_0(m)=\displaystyle \lim_{n \to \infty} \tau^{(n)}_0(m) 
\end{gather*}
are also uniformly convergent with respect to $m$. 
Since $e^{(n)}_i(m) < 3/4$ for all $i,n,m$, we observe that 
\begin{gather*}
	|\tau^{(n)}_0(m)| < \left ( \frac{3}{4} \right )^3 \,. 
\end{gather*}
Since $\chi^{(n)}_0(m-1)$ is written as 
\begin{gather*}
	\chi^{(n)}_0(m-1) 
		= e^{(n)}_0(0) \left (\frac{1}{e^{(n)}_1(m-1) } + 1 + e^{(n)}_0(m-1) \right ) 
		\prod_{\ell=1}^{n+1}  \tau^{(n)}_0(\ell), 
\end{gather*}
we have an estimate 
\begin{gather*}
	|\chi^{(n)}_0(m-1)| 
		< \left ( \frac{1}{e^{(n)}_1(0) } + 1  + e^{(n)}_0(0) \right ) \left ( \frac{3}{4} \right )^{3n + 4} ,
\end{gather*}
where we used Lemma \ref{lem:e-decrease}. 
This implies that $\tau^{(n)}_0(m)\chi^{(n)}_0(m-1)$ converges uniformly to $0$ as $n \to \infty$. 
By the same argument as in the proof of Lemma \ref{lem:gamma}(1), 
we check that the sequence $\left\{ \lambda^{(n)}_0(m) \right\}_{n, m}$ is bounded. 
Then, by using the recurrence relation (\ref{eq:lambda0-recurrence}) 
and the same argument as in the proof of Lemma \ref{lem:gamma} (3), 
we know that there exists $\lambda_0(m)=\displaystyle \lim_{n \to \infty} \lambda^{(n)}_0(m)$ 
and this convergence is uniform with respect to $m$. 
Furthermore, $\nu^{(n)}_0$ converges to $0$ as $n \to \infty$ 
because of $|\nu^{(n)}_0| < (3/4)^{3n+4}$. 
By these facts, 
both $\varphi^{(n)}_0(m)$ and $\Phi^{(n)}_0(m)$ converge uniformly to 
\begin{gather*}
	\varphi_0(m) = 1 + \frac{1 + e_0(m)}{\lambda_0(m) + 1 + e_0(m)} 
	\qquad \text{and} \qquad 
	\Phi_0(m) = 1 + \frac{\lambda_0(m) + e_0(m)}{1 + e_0(m)} , 
\end{gather*}
respectively, as $n \to \infty$. 
The cases $j = 1$ and $j = 2$ are similarly treated. 
\end{proof}

Let 
\begin{gather*}
	\sigma_j(m) = \lim_{n \to \infty} \sigma^{(n)}_j(m), \qquad 
	\tau_j(m) = \lim_{n \to \infty} \tau^{(n)}_j(m) , \qquad 
	\lambda_j(m)  = \lim_{n \to \infty} \lambda_j(m)
\end{gather*}
for $j = 1,2$. By (\ref{eq:lambda0-recurrence}) and (\ref{eq:lambda12-recurrence}), the following recurrence relation holds for all $j = 0, 1,2$:
\begin{gather} \label{eq:lambda-recurrence}
	\lambda_j(m) = \sigma_j(m) + \tau_j(m)\lambda_j(m-1) . 
\end{gather}

\begin{lem} \label{lem:0case}
	For $m \geq 0$, we have 
	\begin{gather*}
		\dfrac{331}{200} < \varphi_0(m +1) , \qquad \dfrac{5}{3} < \Phi_0(m) . 
	\end{gather*}
\end{lem}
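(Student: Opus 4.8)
The plan is to read both inequalities off the closed forms
\begin{gather*}
\varphi_0(m)=1+\frac{1+e_0(m)}{\lambda_0(m)+1+e_0(m)},\qquad
\Phi_0(m)=1+\frac{\lambda_0(m)+e_0(m)}{1+e_0(m)}
\end{gather*}
obtained in the proof of Lemma~\ref{lem:asm(1)}, by controlling the two quantities $e_0(m)$ and $\lambda_0(m)$. Since $\varphi_0$ is increasing in $e_0(m)$ and decreasing in $\lambda_0(m)$ (the relevant partial derivatives have obvious signs), and since $\Phi_0(m)>5/3$ is equivalent to $3\lambda_0(m)+e_0(m)>2$, the lemma reduces to: a lower bound for $e_0(m)$, an upper bound for $\lambda_0(m)$, and a lower bound for $\lambda_0(m)$; the value $m=0$ of $\Phi_0$ will be treated by a direct evaluation.

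First I would pin down $e_0(m)$. Passing to the limit $n\to\infty$ in Lemma~\ref{lem:e-decrease} (the limits $e_0(m)$ existing by Lemma~\ref{lem:e-property}(3)) gives $e_0(1)\le e_0(m)\le e_0(2)$ for every $m\ge 1$ and $e_0(m)\le e_0(0)=e_0$ for every $m\ge 0$, where $e_0=\sqrt2/\sqrt5$ by Lemma~\ref{lem:e-property}(2). From the limiting form of~(\ref{eq:e-rec}), namely $e_0(m+1)=[0,1,1,2+e_0(m)]=(3+e_0(m))/(5+2e_0(m))$, one reads off the explicit values $e_0(1)=(71-\sqrt{10})/117$ and $e_0(2)=(3+e_0(1))/(5+2e_0(1))$, and in particular $1/2<e_0(m)<3/5$ for all $m\ge 1$.

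Next I would control $\lambda_0(m)$ via the recurrence~(\ref{eq:lambda-recurrence}). Taking $n\to\infty$ in the formulas for $\sigma_0^{(n)}(m)$ and $\tau_0^{(n)}(m)$ and using the limiting identities $e_0(m)\bigl(1+e_2(m-1)\bigr)=1$ and $e_1(m-1)\bigl(2+e_0(m-1)\bigr)=1$ (from the three relations preceding~(\ref{eq:e-rec})), the recurrence collapses to
\begin{gather*}
\lambda_0(m)=\frac{\bigl(1-e_0(m)\bigr)\bigl(3+2e_0(m-1)+\lambda_0(m-1)\bigr)}{2+e_0(m-1)}\qquad(m\ge 1),
\end{gather*}
with $\lambda_0(0)=e_0\cdot\frac{-2+\sqrt{10}}{2}\cdot\gamma=(10+\sqrt{10})/15$ coming from the proof of Proposition~\ref{prop:appendix} (recall $\gamma=(4+\sqrt{10})/3$). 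A single application of this recurrence, using $\lambda_0(m-1)\ge 0$ and the monotonicity of $t\mapsto(3+2t)/(2+t)$, yields $\lambda_0(m)\ge\lambda_*:=(1-e_0(2))(3+2e_0(1))/(2+e_0(1))$ for all $m\ge 1$, whence $\lambda_0(m)>1/2$. For the upper bound I would prove $\lambda_0(m)\le\lambda_0(1)=(19+\sqrt{10})/27$ for all $m\ge 1$ by induction on $m$: the inductive step substitutes $e_0(m)\ge e_0(1)$, $e_0(m-1)\le e_0(2)$ and $\lambda_0(m-1)\le\lambda_0(1)$ into the recurrence, uses that $t\mapsto(3+2t+L)/(2+t)$ is increasing when $L<1$ (valid here since $\lambda_0(k)<1$ for every $k$), and reduces to a single inequality between numbers of the form $a+b\sqrt{10}$, $a,b\in\mathbf{Q}$.

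Substituting these bounds: for $m\ge 1$ one gets $\varphi_0(m)\ge 1+(1+e_0(1))/(\lambda_0(1)+1+e_0(1))$, and a short computation (isolating $\sqrt{10}$ and squaring) shows this exceeds $331/200$; likewise $3\lambda_0(m)+e_0(m)>3\cdot\tfrac12+\tfrac12=2$, so $\Phi_0(m)>5/3$. Together with the direct evaluation $\Phi_0(0)=(11+2\sqrt{10})/9>5/3$ this gives the lemma. The main obstacle is that the threshold $331/200=1.655$ lies only barely above the true value of $\inf_{m\ge 1}\varphi_0(m)$ (which is about $1.658$): the estimates on $e_0(m)$ and $\lambda_0(m)$ cannot be loosened by much, and in particular it is essential to establish the sharp bound $\lambda_0(m)\le\lambda_0(1)$ rather than mere boundedness of $\{\lambda_0(m)\}_m$ — replacing $\lambda_0(1)$ by a convenient round value such as $5/6$ already fails to close the argument for $\varphi_0$.
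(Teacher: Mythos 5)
Your proposal is correct and follows essentially the same route as the paper: both read the lemma off the closed forms for $\varphi_0(m)$ and $\Phi_0(m)$, exploit the recurrence $\lambda_0(m)=\sigma_0(m)+\tau_0(m)\lambda_0(m-1)$ (in your collapsed form $\lambda_0(m)=(1-e_0(m))\bigl(3+2e_0(m-1)+\lambda_0(m-1)\bigr)/\bigl(2+e_0(m-1)\bigr)$, which is exactly what the paper uses implicitly when it reduces its inductive steps to $100e_0(m)+100(2e_0(m)-1)e_0(m-1)<69$ and $4e_0(m)+(2e_0(m)-1)e_0(m-1)>2$) together with the ordering of the $e_0(m)$ obtained from Lemma \ref{lem:e-decrease}, and close the argument by an explicit inequality between elements of $\mathbb{Q}(\sqrt{10})$. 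The only real difference is the choice of inductive invariant — the paper carries the $e_0(m)$-dependent bounds $\lambda_0(m)<\tfrac{69}{131}(1+e_0(m))$ and $\lambda_0(m)>\tfrac13(2-e_0(m))$, calibrated so that the thresholds $331/200$ and $5/3$ drop out exactly, while you carry the constant bounds $\lambda_0(m)\le\lambda_0(1)$ and $\lambda_0(m)\ge\lambda_*$ (plus the direct evaluation of $\Phi_0(0)$), which your numerics show still clears $331/200$, albeit with a thinner margin.
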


\begin{proof}
Since
\begin{gather*}
	\dfrac{331}{200}<1+\dfrac{1+e_0(m)}{\lambda_0(m)+1+e_0(m)}
	\iff \lambda_0(m) < \dfrac{69}{131}(1+e_0(m)) ,
\end{gather*}
we prove the right hand side inequality by induction. 
For $m = 1$, we have
\begin{align*}
	& \lambda_0(1)=\dfrac{19+\sqrt{10}}{27}=0.82082\cdots , \\
	& \dfrac{69}{131}(1+e_0(1))=\dfrac{23(188-\sqrt{10})}{5109}=0.83211\cdots . 
\end{align*}
If we assume $\lambda_0(m-1) < 69(1+e_0(m-1))/131$, then by (\ref{eq:lambda-recurrence})
\begin{align*}
	\lambda_0(m) &= \sigma_0(m)+\tau_0(m) \lambda_0(m-1) \\
		&< \sigma_0(m)+\tau_0(m) \cdot \dfrac{69}{131}(1+e_0(m-1)) .
\end{align*}
To complete this induction, it is sufficient to prove
\begin{gather*}
	\sigma_0(m)+\tau_0(m) \cdot \dfrac{69}{131}(1+e_0(m-1)) < \dfrac{69}{131}(1+e_0(m)) . 
\end{gather*}
It is easy to check that the truth of this inequality is equivalent to the truth of 
\begin{gather*}
 100e_0(m)+100(2e_0(m)-1)e_0(m-1) < 69 . 
\end{gather*}
By Lemma \ref{lem:e-decrease}, $e_0(1) \leq e_0(m) \leq e_0(0)$ holds for all $m \geq 1$, and this gives  
\begin{align*}
	& 100e_0(m)+100(2e_0(m)-1)e_0(m-1) \\
		&\leq 100e_0(0)+200(e_0(0))^2-100e_0(1) \\
			&=\dfrac{100(1545789835+22158874 \sqrt{10})}{2387184813} 
		=67.68\cdots \\
		&<69 . 
\end{align*}

Next we prove the second inequality. 
Since
\begin{gather*}
	\dfrac{5}{3}<1+\dfrac{\lambda_0(m)+e_0(m)}{1+e_0(m)}
	\iff \dfrac{1}{3}(2-e_0(m))<\lambda_0(m) , 
\end{gather*}
we prove the inequality of the right hand side by induction on $m$. 
When $m = 0$, this is satisfied since 
\begin{align*}
	& \lambda_0(0)=\dfrac{10+\sqrt{10}}{15}=0.877485\cdots , \\
	& \dfrac{1}{3}(2-e_0(0))=\dfrac{1}{3} \left( 2-\dfrac{\sqrt{2}}{\sqrt{5}} \right)=0.455848\cdots . 
\end{align*}
If we assume $(2-e_0(m-1))/3<\lambda_0(m-1)$, then  by (\ref{eq:lambda-recurrence})
\begin{align*}
	\lambda_0(m) &= \sigma_0(m)+\tau_0(m) \lambda_0(m-1) \\
		&> \sigma_0(m)+\tau_0(m) \cdot \dfrac{1}{3}(2-e_0(m-1)) . 
\end{align*}
Therefore, it is sufficient to check 
\begin{gather*}
	\sigma_0(m)+\tau_0(m) \cdot \dfrac{1}{3}(2-e_0(m-1)) > \dfrac{1}{3}(2-e_0(m))  
\end{gather*}
for $m \geq 1$.  The truth of this inequality is equivalent to the truth of 
\begin{gather*}
 4e_0(m)+(2e_0(m)-1)e_0(m-1) > 2 . 
\end{gather*}
By Lemma \ref{lem:e-decrease}, $e_0(1) \leq e_0(m) \leq e_0(0)$ holds for all $m$, 
thus we obtain
\begin{align*}
	4e_0(m)+(2e_0(m)-1)e_0(m-1) &\geq 4e_0(1)+2(e_0(1))^2-e_0(0) \\
		&=\dfrac{216650-17449\sqrt{10}}{68445} 
		=2.3591\cdots \\
		&>2 . 
\end{align*}
\end{proof}

\begin{lem} \label{lem:12case}
	For $m \geq 0$ and $j = 1,2$, we have 
	\begin{gather*}
		\dfrac{5}{3} < \varphi_j(m) , \qquad \dfrac{5}{3} < \Phi_j(m). 
	\end{gather*}
\end{lem}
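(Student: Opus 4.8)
The plan is to eliminate $\lambda_1(m)$ and $\lambda_2(m)$ in favor of $\lambda_0(m)$, reducing all four inequalities to statements about $\lambda_0(m)$ and $e_0(m)$, and then to settle three of them in one line and the fourth by an exact cancellation. First I would record the \emph{transfer relations}
\begin{gather*}
	\lambda_1(m) = e_1(m)\bigl(\lambda_0(m)+e_0(m)\bigr), \qquad
	\lambda_2(m) = e_2(m)\bigl(\lambda_1(m)+e_1(m)\bigr).
\end{gather*}
These follow from (\ref{eq:t}): applying $t_{i+1}=\eta_{i+1}(t_i+\eta_i)$ to $i=(3n+4)k+3m+j+K$, then letting $k\to\infty$ (using (\ref{eq:eta})) and $n\to\infty$ (using $\lambda_j(m)=\lim_n\lambda^{(n)}_j(m)$ and the fact that the $\nu^{(n)}_j$ are a common product over one period, so they coincide and tend to $0$), gives the identities. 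Combining them with the Möbius identities $e_1(m)=1/(2+e_0(m))$, $e_2(m)=(2+e_0(m))/(3+e_0(m))$, $e_2(m)e_1(m)=1/(3+e_0(m))$, $e_0(m)e_2(m-1)=1-e_0(m)$ and $e_0(m)=(3+e_0(m-1))/(5+2e_0(m-1))$ coming from (\ref{eq:e-rec})--(\ref{eq:e-relation}), and with the formulas $\varphi_j(m)=1+\frac{1+e_j(m)}{\lambda_j(m)+1+e_j(m)}$, $\Phi_j(m)=1+\frac{\lambda_j(m)+e_j(m)}{1+e_j(m)}$, a routine calculation turns the four claims into
\begin{gather*}
	\tfrac53<\varphi_1(m)\iff\lambda_0(m)<\tfrac{3-e_0(m)}{2},\qquad
	\tfrac53<\varphi_2(m)\iff\lambda_0(m)<\tfrac32, \\
	\tfrac53<\Phi_2(m)\iff\lambda_0(m)>\tfrac{1-2e_0(m)}{3},\qquad
	\tfrac53<\Phi_1(m)\iff\lambda_0(m)>1-\tfrac{e_0(m)}{3}.
\end{gather*}

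Three of these are then immediate. From $e_0(m)=(3+e_0(m-1))/(5+2e_0(m-1))$ one gets $e_0(m)>\tfrac12$ for all $m$, so $1-2e_0(m)<0<\lambda_0(m)$, which is $\tfrac53<\Phi_2(m)$. From the proof of Lemma \ref{lem:0case} we already have $\lambda_0(0)=(10+\sqrt{10})/15<1$ and, for $m\ge1$, $\lambda_0(m)<\tfrac{69}{131}(1+e_0(m))<1$ (using $e_0(m)\le e_0(0)=\sqrt{10}/5$); since also $e_0(m)<1$, this gives $\lambda_0(m)<1<(3-e_0(m))/2$ and $\lambda_0(m)<3/2$, i.e. $\tfrac53<\varphi_1(m)$ and $\tfrac53<\varphi_2(m)$.

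The remaining inequality $\tfrac53<\Phi_1(m)$, equivalently $G(m):=\lambda_0(m)-1+e_0(m)/3>0$, is the crux, and the hard part is that it is asymptotically tight: $G(m)\downarrow0$ as $m\to\infty$, so no estimate carrying uniform slack can close it — concretely, the naive induction on $\lambda_1(m)>\tfrac13(2-e_1(m))$ via the $\lambda_1$-recurrence of (\ref{eq:lambda-recurrence}) reduces to $6b^3+24b^2+15b-18\ge0$ in $b=e_0(m-1)$, which is false for $b$ near the common limiting value. The way around this is to go through $\lambda_0$: substituting the closed forms $\tau_0(m)=1/(5+2e_0(m-1))$ and $\sigma_0(m)=(3+2e_0(m-1))/(5+2e_0(m-1))$ (obtained from the definitions of $\tau_0,\sigma_0$ and the identities above, using $1-e_0(m)=(2+e_0(m-1))/(5+2e_0(m-1))$), together with $e_0(m)=(3+e_0(m-1))/(5+2e_0(m-1))$, into $\lambda_0(m)=\sigma_0(m)+\tau_0(m)\lambda_0(m-1)$, the inhomogeneous contributions cancel \emph{exactly}, leaving the homogeneous recurrence $G(m)=\tau_0(m)G(m-1)$. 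Since $G(0)=(2\sqrt{10}-5)/15>0$ and every $\tau_0(m)>0$, we get $G(m)=G(0)\prod_{\ell=1}^{m}\tau_0(\ell)>0$ for all $m$, which is exactly $\tfrac53<\Phi_1(m)$. Thus the only real obstacle is spotting this cancellation; once it is in place the rest is the bookkeeping above plus the monotonicity facts of Lemma \ref{lem:e-decrease} and the bounds already proved in Lemma \ref{lem:0case}.
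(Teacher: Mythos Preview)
Your proof is correct. The transfer relations $\lambda_1(m)=e_1(m)(\lambda_0(m)+e_0(m))$ and $\lambda_2(m)=e_2(m)(\lambda_1(m)+e_1(m))$ do follow from (\ref{eq:t}) in the limit, your four equivalences in terms of $\lambda_0$ and $e_0$ check out, and the key identity $G(m)=\tau_0(m)\,G(m-1)$ with $G(0)=(2\sqrt{10}-5)/15>0$ is a clean way to handle the $\Phi_1$ case.

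The route, however, is not the one the paper has in mind. The paper's ``same argument as Lemma~\ref{lem:0case}'' means running the induction directly on the $\lambda_j$-recurrence (\ref{eq:lambda-recurrence}) for each $j$, exactly as was done for $\lambda_0$. Your reduction to $\lambda_0$ is a genuine alternative: it unifies the four inequalities into statements about a single sequence and makes three of them one-liners. On the other hand, your motivating remark that the naive $\lambda_1$-induction for $\Phi_1$ fails is not accurate. If one sets $b=e_0(m-1)$ and computes $\sigma_1(m)=\frac{6+2b}{13+5b}$, $\tau_1(m)=\frac{2+b}{13+5b}$, $\frac{2-e_1(m-1)}{3}=\frac{3+2b}{3(2+b)}$, $\frac{2-e_1(m)}{3}=\frac{21+8b}{3(13+5b)}$, then the ``sufficient'' inequality in the inductive step becomes
\[
\sigma_1(m)+\tau_1(m)\cdot\frac{2-e_1(m-1)}{3}=\frac{21+8b}{3(13+5b)}=\frac{2-e_1(m)}{3},
\]
an exact equality, so strict inequality is preserved from the base case; no cubic obstruction arises. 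This equality is, of course, the same phenomenon as your identity $G(m)=\tau_0(m)G(m-1)$, expressed in the $\lambda_1$ variables rather than the $\lambda_0$ variables. So the two approaches are different packagings of the same underlying cancellation; yours makes the tightness of $\Phi_1$ explicit and avoids repeating four parallel inductions, while the paper's intended argument is more uniform with Lemma~\ref{lem:0case}.
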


\begin{proof}
These inequalities are proved by the same argument as in the proof of Lemma \ref{lem:0case}. 
We omit the details. 
\end{proof}

\begin{lem} \label{lem:asm(2)}
	There exists $\delta>0$ such that inequalities
		\begin{alignat*}{2}
			\varphi_0(0) + \delta & < \varphi_0(m+1), & \qquad \varphi_0(0)+\delta & \leq \Phi_0(m) \\  
			\varphi_0(0)+\delta & \leq \varphi_j(m),& \qquad \varphi_0(0)+\delta & \leq \Phi_j(m) . 
	\end{alignat*}
\end{lem}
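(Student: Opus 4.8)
The plan is to obtain this lemma as an immediate consequence of Lemmas~\ref{lem:0case} and \ref{lem:12case} once the value of $\varphi_0(0)$ is recorded. First I would note that, by the definition $\varphi^{(n)}_j(m)=\zeta^{(n)}_{3m+j}$, one has $\varphi^{(n)}_0(0)=\zeta^{(n)}_0$, so Proposition~\ref{prop:appendix} yields $\varphi_0(0)=\lim_{n\to\infty}\zeta^{(n)}_0=r_1=(48+\sqrt{10})/31=1.65039\cdots$. Since $r_1<331/200=1.655$, the number $\delta:=\tfrac{331}{200}-r_1=\tfrac{661-200\sqrt{10}}{6200}>0$ is a legitimate choice, and this is the $\delta$ I would take.

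It then remains to verify the four families of inequalities for this $\delta$, and each follows at once from the uniform bounds already established. For the first, Lemma~\ref{lem:0case} gives $\varphi_0(m+1)>331/200=\varphi_0(0)+\delta$ for all $m\ge 0$, which is exactly the required strict inequality. For the remaining three, observe that $\varphi_0(0)+\delta=331/200<5/3$; combining this with the bounds $\Phi_0(m)>5/3$ of Lemma~\ref{lem:0case} and $\varphi_j(m)>5/3$, $\Phi_j(m)>5/3$ ($j=1,2$) of Lemma~\ref{lem:12case} gives $\varphi_0(0)+\delta<\Phi_0(m)$ and $\varphi_0(0)+\delta<\varphi_j(m),\Phi_j(m)$ for all $m\ge 0$, which in particular yields the (non-strict) inequalities in the statement.

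There is essentially no obstacle remaining at this point: all of the analysis — the uniform-in-$m$ lower bounds for $\varphi_j,\Phi_j$ and the computation of the limit $\varphi_0(0)=r_1$ — has been carried out in Lemmas~\ref{lem:0case}, \ref{lem:12case} and Proposition~\ref{prop:appendix}. The only thing one has to check by hand is the elementary numerical comparison $r_1<331/200$ (clear from $r_1=1.65039\cdots$) together with the trivial fact $331/200<5/3$; after that the argument is a one-line combination of the cited inequalities. With Lemma~\ref{lem:asm(1)} (the uniform convergence) and the present lemma in hand, Lemma~\ref{lem:memo2} applied to the six double sequences $\varphi_0,\varphi_1,\varphi_2,\Phi_0,\Phi_1,\Phi_2$ (with $\psi_0=\varphi_0$) then delivers Step~3, namely $\zeta^{(n)}_0=\min_{0\le i\le 3n+3}\{\zeta^{(n)}_i,\xi^{(n)}_i\}$ for all sufficiently large $n$.
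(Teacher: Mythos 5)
Your proposal is correct and coincides with the paper's own argument: the paper likewise sets $\delta = \tfrac{331}{200} - \varphi_0(0) = \tfrac{331}{200} - r_1 > 0$ (using $\varphi_0(0) = \lim_{n\to\infty}\zeta^{(n)}_0 = r_1$ from Proposition \ref{prop:appendix}) and concludes via the bounds of Lemmas \ref{lem:0case} and \ref{lem:12case}, exactly as you do.
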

hold for all $m \geq 0$ and  $j=1, 2$ .	

\begin{proof} 
Since $\varphi_0(0) = \lim_{n \to \infty} \zeta^{(n)}_0 = r_1$, the value
\begin{gather*}
	\delta=\dfrac{331}{200}-\varphi_0(0)
		=\dfrac{331}{200}-\dfrac{48+\sqrt{10}}{31}
		>0 
\end{gather*}
satisfies the desired inequalities by Lemmas \ref{lem:0case} and \ref{lem:12case}. 
\end{proof}

By Lemmas \ref{lem:asm(1)} and \ref{lem:asm(2)}, 
the double series  $\left \{ \varphi^{(n)}_i(m) \right \}_{n,m}$ and $\left \{ \Phi^{(n)}_i(m) \right \}_{n,m}$ 
for $i = 0, 1, 2$ satisfy the  assumptions of Lemma \ref{lem:memo2}, 
whence 
\begin{gather*}
	 \varphi^{(n)}_0(0) = \displaystyle \min_{0 \leq m \leq n} \left \{ \varphi^{(n)}_0(0), \Phi^{(n)}_0(0), \varphi^{(n)}_0(m+1), 
			\Phi^{(n)}_0(m+1), \varphi^{(n)}_1(m), \Phi^{(n)}_1(m), \varphi^{(n)}_2(m), \Phi^{(n)}_2(m) \right \}
\end{gather*}
for all sufficiently large $n$. Step 3 is completed and we finish the proof of Theorem \ref{th:lim-xn}.

\vskip 10mm
\section{The largest accumulation point of $\rep(\mathbf{St})$} \label{sec:largest}

In this section, we prove

\begin{thm}\label{th:accumulation}
	$r_1$ is an accumulation point of $\rep(\mathbf{St})$. 
\end{thm}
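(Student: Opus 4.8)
The plan is to exhibit an explicit sequence of Sturmian words whose exponents of repetition converge to $r_1$. Theorem \ref{th:lim-xn} already provides the sequence $\{x^{(n)}\}_n$ with slopes $\theta^{(n)} = [0, a_1, \dots, a_K, \overline{(2,1,1)^n, 2,1,1,1}]$, satisfying the condition that the case $[2]_k$ holds for all $k > K$, and tells us $\rep(x^{(n)}) \to r_1$. So the only thing left to verify is that such Sturmian words $x^{(n)}$ actually exist — that is, that for each $n$ one can choose the intercept (and possibly the initial partial quotients $a_1, \dots, a_K$) so that the case $[2]_k$ of Lemma \ref{buglem:7.2} holds for every $k > K$. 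Once that is done, $\{\rep(x^{(n)})\}_n$ is a sequence in $\rep(\mathbf{St})$ converging to $r_1$, and since $r_1 < r_{\max}$ and (by Theorem \ref{th:repmax2}) no Sturmian word has $\rep$ strictly between $r_1$ and $r_{\max}$, the values $\rep(x^{(n)})$ are eventually $\le r_1$; to conclude $r_1$ is a genuine accumulation point one should also check the $\rep(x^{(n)})$ are not all equal to $r_1$ — this follows from monotonicity/strict convergence in the computation of Step 2, or can be arranged by discarding finitely many terms.

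First I would recall the mechanism by which the condition ``$[2]_k$ holds for all large $k$'' can be forced. By Lemma \ref{buglem:7.2} the three cases $[1]_k, [2]_k, [3]_k$ partition the Sturmian words of a given slope according to the position of the intercept, and the transition rules $W_{k+1} = W_k M_{k-1}$ (case $[2]_k$) versus $W_{k+1} = W_k$ (case $[3]_k$) show that once one is in case $[2]_k$ with a sufficiently long suffix $W_k$, the word $W_{k+1} = W_k M_{k-1}$ is again a long enough suffix of $M_{k+1}$ to land in case $[2]_{k+1}$. Concretely I would argue: if $t_k = |W_k|/q_k$ is bounded below by a positive constant and bounded above away from the threshold forcing case $[1]$ or $[3]$, then case $[2]_{k+1}$ holds; the recurrence $t_{k+1} = \eta_{k+1}(t_k + \eta_k)$ from (\ref{eq:t}), together with $\eta_k \le 3/4$ coming from the partial quotients being $1$ or $2$ with no two consecutive $2$'s, keeps $t_k$ in such a range. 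So it suffices to choose the intercept of $x^{(n)}$ so that case $[2]_{K+1}$ holds; periodicity of the tail of $\theta^{(n)}$ then propagates it.

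The cleanest route is probably the one used in \cite[Proof of Theorem 3.4]{bugeaud} and in the setup of \S\ref{sec:accumulate}: for a slope $\theta$ with eventually periodic continued fraction one takes $x^{(n)} = s_{\theta^{(n)}, \rho_n}$ for a suitable intercept $\rho_n$ (or even works with a two-sided characteristic-type word) so that the decomposition of the prefix into blocks $M_k$ is exactly the one giving case $[2]_k$ repeatedly; one then just has to observe this is consistent, i.e. that the ``$2q_k + q_{k-1}$ cases'' in Lemma \ref{buglem:7.2} do contain a case-$[2]$ configuration and that the propagation rule $W_{k+1} = W_k M_{k-1}$ never pushes $W_{k+1}$ out of being a nonempty suffix of $M_{k+1}$ (which holds because $|M_k M_{k-1}| = q_k + q_{k-1} < q_{k+1} = |M_{k+1}|$ whenever $a_{k+1} \ge 2$, and an easy variant when $a_{k+1} = 1$). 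I would write this as a short lemma — ``for each $n$ there is a Sturmian word of slope $\theta^{(n)}$ with $[2]_k$ holding for all $k > K$'' — and then the theorem is immediate from Theorem \ref{th:lim-xn}.

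The main obstacle I anticipate is purely bookkeeping: verifying that the intercept can be chosen so that case $[2]_k$ — rather than $[1]_k$ or $[3]_k$ — occurs for \emph{all} $k$ beyond some point, and in particular handling the partial quotients equal to $1$ (where $M_{k+1} = M_k M_{k-1}$ has length only $q_k + q_{k-1}$, leaving little room) without slipping into case $[3]$. This is where one must use the precise combinatorics of the characteristic blocks $M_k$ and the suffix $W_k$, and it is the place where a careless argument would break. Everything else — the convergence $\rep(x^{(n)}) \to r_1$, the fact that the limit is an honest accumulation point and not an isolated value — is already delivered by Theorem \ref{th:lim-xn} and Theorem \ref{th:repmax2}.
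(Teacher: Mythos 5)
Your reduction has the right skeleton, and the part you labour over most is actually the easy half: the paper obtains the needed words with case $[2]_k$ for all large $k$ by the explicit construction of Bugeaud--Kim, taking $s^{(n)}=\lim_{k\to\infty}1M^{(n)}_0M^{(n)}_1\cdots M^{(n)}_{k-2}$ for the slope $\vartheta_n=[0,\overline{(2,1,1)^{n+1},1}]$; then $s^{(n)}=W^{(n)}_kM^{(n)}_{k-1}M^{(n)}_k\widetilde M^{(n)}_k\cdots$ with $W^{(n)}_k=1M^{(n)}_0\cdots M^{(n)}_{k-2}$ a suffix of $M^{(n)}_k$, so case $[2]_k$ of Lemma \ref{buglem:7.2} holds for every $k\ge 2$ by inspection, and no intercept bookkeeping is required. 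Combined with Theorem \ref{th:lim-xn} this gives $\rep(s^{(n)})\to r_1$, as you say.

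The genuine gap is the point you wave away in one clause: that the values $\rep(x^{(n)})$ are not all equal to $r_1$. Theorems \ref{th:repmax2} and 0.3 only give $\rep(x^{(n)})\le r_1$ (the slope is not of the form $[0,\dots,\overline{2,1,1}]$, so $\rep\ne r_{\max}$); they do not exclude equality, and if every term equalled $r_1$ you would have shown $r_1\in\rep(\mathbf{St})$ but not that it is an accumulation point. There is no ``monotonicity/strict convergence'' established in Step 2 of \S\ref{sec:accumulate} that yields $\zeta^{(n)}_0\ne r_1$, and ``discarding finitely many terms'' cannot repair this. In the paper, this strictness is precisely Proposition \ref{prop:less}, and proving it is the main content of \S\ref{sec:largest}: one writes $\rep(x^{(n)})=\varphi^{(n)}_0(0)=1+1/\bigl(h_n(e^{(n)}_0(0))+1\bigr)$, where $h_n$ is a rational function with rational coefficients (Lemma \ref{lem:1}), so the equality $\rep(x^{(n)})=r_1=(48+\sqrt{10})/31$ would force $e^{(n)}_0(0)\in\mathbb{Q}(\sqrt{10})$; this is ruled out for $n$ odd, or $n\not\equiv\pm1\pmod 5$, by computing the discriminant $D_n$ of the quadratic irrational $1/e^{(n)}_0(0)=[\overline{1,(1,1,2)^{n+1}}]$ modulo $5$ and modulo $2$ (Lemmas \ref{lem:discriminant} and \ref{lem:3}). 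Without an argument of this kind, or some other proof that $\rep(x^{(n)})\ne r_1$ for infinitely many $n$, your proposal does not establish that $r_1$ is an accumulation point.
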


We start with another description of $\zeta^{(n)}_0 = \varphi^{(n)}_0(0)$. 
The group $\mathrm{GL}_2(\mathbb{Q})$ acts 
on the set $\mathbb{R} \setminus \mathbb{Q}$ of irrational numbers 
by linear fractional transformations, 
i.e., 
\begin{gather*}
	(a_{ij})* \xi = \frac{a_{11}\xi + a_{12}} {a_{21}\xi + a_{22}}, \qquad 
	(a_{ij})  \in \mathrm{GL}_2(\mathbb{Q}), \;\; \xi \in \mathbb{R} \setminus \mathbb{Q}. 
\end{gather*}
Let 
\begin{gather*}
	A = \begin{pmatrix}
			1 & 3 \\
			2 & 5
		\end{pmatrix}
	\qquad \text{and} \qquad
	A^m  = \begin{pmatrix}
			a_m & b_m \\
			c_m & d_m
		\end{pmatrix}
\end{gather*}
for $m \ge 1$. 
Eigenvalues of $A$ are $p = 3 - \sqrt{10}$ and $q = 3 + \sqrt{10}$, 
and matrix calculation gives
\begin{gather*}
	A^m = \frac{1}{2\sqrt{10}} 
		\begin{pmatrix}
			(\sqrt{10}+2)p^m + (\sqrt{10} - 2)q^m & 3(q^m - p^m) \\ 
			2(q^m-p^m) & (\sqrt{10} - 2)p^m + (\sqrt{10} + 2)q^m
		\end{pmatrix} = \frac{q^m}{2\sqrt{10}}A_0(m) .
\end{gather*}
Let $z$ be a variable.  
We define $z_m$ and $y_m$ for $m = 0, 1, 2, \cdots$ as follows: 
\begin{gather*}
	z_0 = z, \qquad 
	z_m = A^m*z = \frac{a_m z + b_m}{c_m z + d_m}, \qquad 
	y_m  = \frac{z_m(2z_{m+1} -1)}{z_{m+1}}.
\end{gather*}
Since $z_m$ is satisfying $ 2z_{m+1}z_{m} = 3 + z_{m} - 5z_{m+1}$, 
$y_m$ is represented as 
\begin{gather*}
	y_m = \frac{3}{z_{m+1}} - 5 
		= \begin{pmatrix}
			-5 & 3 \\ 
			1 & 0 
		\end{pmatrix} 
		* z_{m+1} . 
\end{gather*}
For each positive integer $n$, let
\begin{gather*}
	f_n(z) = 2 + 2 \sum_{j=0}^{n-1} \prod_{i = j+1}^n y_{i} + \prod_{i = 0}^n y_{i} ,  \qquad 
	g_n(z) = z \prod_{j=1}^{n+1} (2z_{j} - 1) , \\
	h_n(z) = \frac{1}{1+z} \cdot \frac{z \cdot z_{n+1} \cdot f_n(z)}{1 - g_n(z)} .
\end{gather*}

\begin{lem}
$\varphi^{(n)}_0(0)$ is represented as 
\begin{gather*}
	\varphi^{(n)}_0(0) = 1 + \frac{1}{h_n(e^{(n)}_0(0)) + 1}. 
\end{gather*}
\end{lem}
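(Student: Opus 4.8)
The plan is to follow the chain of definitions in \S\ref{sec:accumulate} and \S\ref{sec:largest} and to match, term by term, the ingredients of $h_n$ against those of the formula
\begin{gather*}
	\varphi^{(n)}_0(0) = \zeta^{(n)}_0 = 1+\dfrac{1+e^{(n)}_0(0)}{\dfrac{\lambda^{(n)}_0(0)}{1 - \nu^{(n)}_0}+1+e^{(n)}_0(0)}
\end{gather*}
obtained earlier (the case $m=0$ of the expression for $\zeta^{(n)}_{3m}$), under the substitution $z = e^{(n)}_0(0)$. So the whole statement reduces to the two identities $\nu^{(n)}_0 = g_n(e^{(n)}_0(0))$ and $\lambda^{(n)}_0(0) = e^{(n)}_0(0)\cdot z_{n+1}\cdot f_n(e^{(n)}_0(0))$.

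\emph{Step 1: the iterates.} First I would check that $z_m = e^{(n)}_0(m)$ for all $m \geq 0$. Since $A^m = (q^m/2\sqrt{10})A_0(m)$ and a linear fractional transformation is unchanged by scaling its matrix, $z_m = A^m * z = A_0(m) * z = e^{(n)}_0(m)$ by (\ref{eq:e-gen}); equivalently, $z_{m+1} = A*z_m$ reads $z_{m+1} = (z_m+3)/(2z_m+5)$, which is exactly the recurrence $e^{(n)}_0(m+1) = [0,1,1,2+e^{(n)}_0(m)]$ of (\ref{eq:e-rec}). As already noted in the text, this yields $2z_{m+1}z_m = 3 + z_m - 5z_{m+1}$ and hence $y_m = 3/z_{m+1} - 5$.

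\emph{Step 2: the denominator factor.} From $e^{(n)}_1(m) = 1/(2+e^{(n)}_0(m))$, $e^{(n)}_2(m) = 1/(1+e^{(n)}_1(m))$ and $e^{(n)}_0(m) = 1/(1+e^{(n)}_2(m-1))$ one computes in one line that
\begin{gather*}
	\tau^{(n)}_0(m) = e^{(n)}_0(m)\,e^{(n)}_2(m-1)\,e^{(n)}_1(m-1) = 2e^{(n)}_0(m) - 1 = 2z_m - 1 .
\end{gather*}
Therefore $\nu^{(n)}_0 = \tau^{(n)}_0(n+1)\cdots\tau^{(n)}_0(1)\,e^{(n)}_0(0) = z\prod_{j=1}^{n+1}(2z_j - 1) = g_n(z)$ with $z = e^{(n)}_0(0)$.

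\emph{Step 3: the numerator.} Since $\lambda^{(n)}_0(0) = e^{(n)}_0(0)\,e^{(n)}_0(n+1)\,\gamma^{(n)}(n) = z\,z_{n+1}\,\gamma^{(n)}(n)$, it suffices to show $\gamma^{(n)}(n) = f_n(z)$. Using the same three relations among $e^{(n)}_0,e^{(n)}_1,e^{(n)}_2$ (this time in the form involving $e^{(n)}_0(m+1)$), one gets
\begin{gather*}
	\tau^{(n)}(m) = e^{(n)}_2(m)\,e^{(n)}_1(m)\,e^{(n)}_0(m) = \dfrac{3}{e^{(n)}_0(m+1)} - 5 = \dfrac{3}{z_{m+1}} - 5 = y_m .
\end{gather*}
Feeding this into the recurrence (\ref{eq:gamma}), namely $\gamma^{(n)}(0) = 2 + \tau^{(n)}(0)$ and $\gamma^{(n)}(m) = 2 + \tau^{(n)}(m)\gamma^{(n)}(m-1)$, and unrolling gives
\begin{gather*}
	\gamma^{(n)}(n) = 2 + 2\sum_{j=0}^{n-1}\prod_{i=j+1}^{n} y_i + \prod_{i=0}^{n} y_i = f_n(z) .
\end{gather*}

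\emph{Step 4: assembly.} Substituting $\nu^{(n)}_0 = g_n(z)$ and $\lambda^{(n)}_0(0) = z\,z_{n+1}f_n(z)$ into the displayed formula for $\varphi^{(n)}_0(0)$ and dividing numerator and denominator of the inner fraction by $1+z$,
\begin{gather*}
	\varphi^{(n)}_0(0) = 1 + \dfrac{1+z}{\dfrac{z\,z_{n+1}f_n(z)}{1-g_n(z)} + 1 + z} = 1 + \dfrac{1}{\dfrac{1}{1+z}\cdot\dfrac{z\,z_{n+1}f_n(z)}{1-g_n(z)} + 1} = 1 + \dfrac{1}{h_n(z)+1} ,
\end{gather*}
which is the assertion with $z = e^{(n)}_0(0)$.

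The only steps carrying genuine content are the two one-line identities $\tau^{(n)}_0(m) = 2z_m - 1$ and $\tau^{(n)}(m) = y_m$; everything else is substitution. Consequently the main (and rather mild) obstacle is bookkeeping: keeping the index shifts straight between $\tau^{(n)}_0(m)$ and $\tau^{(n)}(m)$, and verifying that the telescoping of the $\gamma$-recurrence reproduces exactly the sum defining $f_n$.
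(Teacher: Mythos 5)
Your proposal is correct and follows essentially the same route as the paper: identify $e^{(n)}_0(m)=z_m$, verify $\tau^{(n)}_0(m)=2z_m-1$ to get $\nu^{(n)}_0=g_n$, verify $\tau^{(n)}(m)=y_m$ and unroll the $\gamma$-recurrence to get $\gamma^{(n)}(n)=f_n$, then substitute into the $m=0$ formula for $\zeta^{(n)}_{3m}$. The only cosmetic difference is that you check $\tau^{(n)}(m)=y_m$ via $y_m=3/z_{m+1}-5$, whereas the paper writes $\tau^{(n)}(m)=\frac{z_m}{z_{m+1}}(2z_{m+1}-1)$; both are the same one-line computation.
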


\begin{proof}
From (\ref{eq:e-gen}),   it follows 
\begin{gather*}
e^{(n)}_0(m) = A^m*e^{(n)}_0(0)  = z_m \big\vert_{z = e^{(n)}_0(0)}.
\end{gather*}
Since 
\begin{gather*}
e^{(n)}_1(m) = \frac{1}{2 + e^{(n)}_0(m)}, \qquad e^{(n)}_2(m) = \frac{1}{e^{(n)}_0(m+1)}  -  1,
\end{gather*}
we have 
\begin{gather*}
\tau^{(n)}_0(m) = e^{(n)}_0(m)e^{(n)}_2(m-1)e^{(n)}_1(m-1) = 2e^{(n)}_0(m) - 1 . 
\end{gather*}
This implies
\begin{gather*}
 \nu^{(n)}_0 = e^{(n)}_0(0) \prod_{j=1}^{n+1} \tau^{(n)}_0(j) = g_n(e^{(n)}_0(0)).
\end{gather*}
By the definition (\ref{eq:gamma}) of $\gamma^{(n)}(m)$, we have
\begin{align*}
 \gamma^{(n)}(n) & = 2 + \tau^{(n)}(n)\gamma^{(n)}(n-1) = 2 + \tau^{(n)}(n) \left ( 2 + \tau^{(n)}(n-1)\gamma^{(n)}(n-2) \right )  \\
 & = 2 + 2\sum_{ j=0}^{n-1} \prod_{i = j+1}^n \tau^{(n)}(i) + \prod_{i=0}^n \tau^{(n)}(i). 
\end{align*}
Since 
\begin{gather*}
\tau^{(n)}(m) = e^{(n)}_0(m)e^{(n)}_1(m)e^{(n)}_2(m) = \frac{e^{(n)}_0(m)}{e^{(n)}_0(m+1)}\tau^{(n)}_0(m+1) = \frac{z_m}{z_{m+1}}(2z_{m+1} -1) \big\vert_{z = e^{(n)}_0(0)}, 
\end{gather*}
we obtain
\begin{gather*}
\gamma^{(n)}(n) = f_n(e^{(n)}_0(0)),  
\end{gather*}
and hence
\begin{gather*}
\lambda^{(n)}_0(0) = e^{(n)}_0(0)e^{(n)}_0(n+1)\gamma^{(n)}(n) = z \cdot z_{n+1} \cdot f_n(z) \big\vert_{z = e^{(n)}_0(0)}.
\end{gather*}
Therefore, 
\begin{gather*}
\varphi^{(n)}_0(0)  = 1 + \dfrac{1 + e^{(n)}_0(0)}{\dfrac{\lambda^{(n)}_0(0)}{1 - \nu^{(n)}_0} + 1 + e^{(n)}_0(0)}  =  1 + \frac{1}{h_n(e^{(n)}_0(0)) + 1}. 
\end{gather*}
\end{proof}
 
For every $n$, we put 
\begin{gather*}
	a_{n+1}' = 2\sum_{j=1}^{n+1} a_j + 1, \qquad 
	b_{n+1}' = 2 \sum_{j=1}^{n+1} b_j .
\end{gather*}

\begin{lem} \label{lem:1}
	Functions $f_n(z), g_n(z)$ and $h_n(z)$ are rational functions of the following forms: 
	\begin{gather*}
		f_n(z) = \frac{a_{n+1}' z +b_{n+1}'} {a_{n+1} z + b_{n+1}} , \qquad 
		g_n(z) = \frac{z}{c_{n+1}z + d_{n+1}}, \qquad 
		h_n(z) = \frac{z}{1+z} \cdot \frac{a_{n+1}' z + b_{n+1}'}{(c_{n+1}-1)z + d_n} .
	\end{gather*}
\end{lem}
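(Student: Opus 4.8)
The three formulas are pure rational-function identities, and the plan is to prove all of them by the same device: substitute the matrix description $z_m = A^m * z = (a_m z + b_m)/(c_m z + d_m)$ into the definitions of $g_n$, $f_n$ and $h_n$, rewrite each factor $2z_j-1$ and each $y_i$ as a quotient of linear forms in $z$ whose coefficients are entries of powers of $A$, and then exploit that the resulting products telescope. The only facts about the particular matrix $A = \begin{pmatrix} 1 & 3 \\ 2 & 5 \end{pmatrix}$ that are needed are the row-vector identities
\[
\begin{pmatrix} 2 & -1 \end{pmatrix} A = \begin{pmatrix} 0 & 1 \end{pmatrix}, \qquad
\begin{pmatrix} -5 & 3 \end{pmatrix} A = \begin{pmatrix} 1 & 0 \end{pmatrix},
\]
both checked by one line of multiplication, together with $A^0 = I$, i.e.\ $(a_0,b_0,c_0,d_0) = (1,0,0,1)$.

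First I would extract the local formulas. Multiplying the first identity on the right by $A^{j-1}$ yields $(2a_j - c_j,\ 2b_j - d_j) = (c_{j-1},\ d_{j-1})$ for $j \ge 1$, hence
\[
2z_j - 1 = \frac{(2a_j-c_j)z + (2b_j - d_j)}{c_j z + d_j} = \frac{c_{j-1}z + d_{j-1}}{c_j z + d_j}.
\]
Since $y_i = 3/z_{i+1} - 5$, writing $y_i = \big(3(c_{i+1}z+d_{i+1}) - 5(a_{i+1}z+b_{i+1})\big)/(a_{i+1}z+b_{i+1})$ and multiplying the second identity on the right by $A^{i}$ gives $(-5a_{i+1}+3c_{i+1},\ -5b_{i+1}+3d_{i+1}) = (a_i,\ b_i)$, so
\[
y_i = \frac{a_i z + b_i}{a_{i+1} z + b_{i+1}} \qquad (i \ge 0).
\]

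Then the telescoping is immediate. For $g_n$,
\[
g_n(z) = z\prod_{j=1}^{n+1}\frac{c_{j-1}z+d_{j-1}}{c_j z + d_j} = z\cdot\frac{c_0 z + d_0}{c_{n+1}z + d_{n+1}} = \frac{z}{c_{n+1}z+d_{n+1}}.
\]
For $f_n$, the same cancellation gives $\prod_{i=j+1}^n y_i = (a_{j+1}z + b_{j+1})/(a_{n+1}z+b_{n+1})$ and $\prod_{i=0}^n y_i = z/(a_{n+1}z+b_{n+1})$; substituting and clearing the common denominator $a_{n+1}z+b_{n+1}$, the numerator has $z$-coefficient $2a_{n+1} + 2\sum_{j=1}^n a_j + 1 = a_{n+1}'$ and constant term $2b_{n+1} + 2\sum_{j=1}^n b_j = b_{n+1}'$, giving $f_n(z) = (a_{n+1}'z+b_{n+1}')/(a_{n+1}z+b_{n+1})$. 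Finally, for $h_n$ I would combine these with $z_{n+1} = (a_{n+1}z+b_{n+1})/(c_{n+1}z+d_{n+1})$: the factor $a_{n+1}z + b_{n+1}$ cancels between $z_{n+1}$ and $f_n$, so $z\cdot z_{n+1}\cdot f_n(z) = z(a_{n+1}'z+b_{n+1}')/(c_{n+1}z+d_{n+1})$, while $1 - g_n(z) = \big((c_{n+1}-1)z + d_{n+1}\big)/(c_{n+1}z+d_{n+1})$; the two denominators $c_{n+1}z+d_{n+1}$ cancel against each other and one is left with $h_n(z) = \frac{z}{1+z}\cdot\frac{a_{n+1}'z+b_{n+1}'}{(c_{n+1}-1)z+d_{n+1}}$.

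None of the steps presents a genuine obstacle; the whole proof is two telescoping products and one round of cancellation. The one place that requires care is the index bookkeeping in $f_n$ — matching the running sums $\sum_{j=0}^{n-1}$ that appear after telescoping against the definition $a_{n+1}' = 2\sum_{j=1}^{n+1}a_j + 1$ — and remembering to use $A^0 = I$ for the boundary terms of both telescoping products.
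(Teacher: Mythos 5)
Your proposal is correct and is essentially the paper's own argument: the paper proves the same two local identities in matrix form, namely $\bigl(\begin{smallmatrix}-5&3\\1&0\end{smallmatrix}\bigr)A^{m+1}=\bigl(\begin{smallmatrix}a_m&b_m\\a_{m+1}&b_{m+1}\end{smallmatrix}\bigr)$ and $2z_{m+1}-1=\frac{c_mz+d_m}{c_{m+1}z+d_{m+1}}$, and then telescopes exactly as you do, leaving the formula for $h_n$ as an easy consequence. One point worth noting: your computation gives the denominator $(c_{n+1}-1)z+d_{n+1}$, whereas the lemma as printed has $(c_{n+1}-1)z+d_n$; since $1-g_n(z)=\frac{(c_{n+1}-1)z+d_{n+1}}{c_{n+1}z+d_{n+1}}$, your version is the correct one and the paper's $d_n$ is evidently a misprint for $d_{n+1}$ (harmless for its later use in Proposition \ref{prop:less}, where only the rationality of the coefficients matters).
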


\begin{proof}
From
\begin{gather*}
	\begin{pmatrix}
		-5 & 3 \\
		1 & 0
	\end{pmatrix}
	A^{m+1} =  
	\begin{pmatrix}
		-5 & 3 \\
		1 & 0
	\end{pmatrix} 
	A
	\begin{pmatrix}
		a_m & b_m \\
		c_m & d_m
	\end{pmatrix}
	= \begin{pmatrix}
		a_m & b_m \\
		a_{m+1} & b_{m+1}
	\end{pmatrix} ,
\end{gather*}
it follows 
\begin{gather*}
	y_{m} = \begin{pmatrix}
			-5 & 3 \\
			1 & 0
		\end{pmatrix}
		A^{m+1} *z
	= \frac{a_m z + b_m}{a_{m+1}z + b_{m+1}},
\end{gather*}
and hence
\begin{gather*}
	y_{n} \cdots y_{i} = \frac{a_i z + b_i}{a_{n+1}z + b_{n+1}}. 
\end{gather*}
This gives
\begin{gather*}
	f_n(z) = 2 + 2 \sum_{j=0}^{n-1} \frac{a_{j+1} z + b_{j+1}}{a_{n+1}z + b_{n+1}} + \frac{z}{a_{n+1}z + b_{n+1}} = \frac{a_{n+1}' z +b_{n+1}'} {a_{n+1} z + b_{n+1}}. 
\end{gather*}
Similarly, from 
\begin{gather*}
	2z_{m} - 1 = \begin{pmatrix}
				2 & -1 \\
				0 & 1
			\end{pmatrix}
			A^{m}*z
		= \frac{(2a_m - c_m) z + (2b_m - d_m)}{c_m z + d_m}, 
\end{gather*}
it follows
\begin{gather*}
	2z_{m+1} - 1 = \begin{pmatrix}
				2 & -1 \\
				0 & 1
			\end{pmatrix}
			A A^{m}*z
		= \frac{c_mz + d_m} {c_{m+1}z + d_{m+1}} .
\end{gather*}
This gives
\begin{gather*}
	g_n(z) = z \cdot \frac{(2a_1 - c_1)z + (2b_1 - d_1)}{c_{n+1}z + d_{n+1}}
		= \frac{z}{c_{n+1}z + d_{n+1}} .
\end{gather*}
The equality of $h_n(z)$ is also easy. 
\end{proof}

Next we show that 
the quadratic irrational $e^{(n)}_0(0) = [0,\overline{1,(1,1,2)^{n+1}}]$ is not contained 
in the quadratic number field $\mathbb{Q}(\sqrt{10})$ when $n$ is odd or $n \not\equiv \pm 1 \mod 5$. 
Let $\theta_n = [\overline{1,(1,1,2)^{n+1}}] =1/  e^{(n)}_0(0) $ and 
$\theta = [1,\overline{1,1,2}] = \sqrt{10}/{2} $. 
The $j$-th convergent of $\theta_n$ is equal to the $j$-the convergent  $p_j/q_j $  of $\theta$ if $j \leq 3n+4$. 
Since $\theta_n = [1,(1,1,2)^{n+1}, \theta_n]$, we obtain
\begin{gather*}
	\theta_n = \frac{p_{3n+3} \theta_n + p_{3n+2}}{q_{3n+3}\theta_n + q_{3n+2}} ,
\end{gather*}
and hence
\begin{gather*}
	\theta_n = \frac{p_{3n+3} - q_{3n+2} + \sqrt{D_n}}{2q_{3n+3}} , 
\end{gather*}
where 
\begin{gather*}
	D_n = (q_{3n+2}-p_{3n+3})^2 + 4q_{3n+3}p_{3n+2}
		= (q_{3n+2} + p_{3n+3})^2 - 4(-1)^{3n+4} .
\end{gather*}
Let
\begin{gather*}
	B_1 = \begin{pmatrix}
			1 & 1 \\
			1 & 0
		\end{pmatrix}, \qquad
	B_2 =  \begin{pmatrix}
			2 & 1 \\
			1 & 0
		\end{pmatrix}, \qquad
	B = B_1^2 B_2 = \begin{pmatrix}
			5 & 2 \\
			3 & 1
		\end{pmatrix} .
\end{gather*}
Then we have 
\begin{gather*}
	B_1 B^{n+1} = \begin{pmatrix}
			p_{3n+3} & p_{3n+2} \\
			q_{3n+3} & q_{3n+2}
		\end{pmatrix}
\end{gather*}
and  
\begin{gather*}
	D_n = \tr (B_1B^{n+1})^2 - 4(-1)^n . 
\end{gather*}

\begin{lem}\label{lem:discriminant}
	For each $n$, the following congruent conditions hold: 
	\begin{gather*}
		D_n \equiv \begin{cases}
				(-1)^k & (n = 5k) \\
				2(-1)^k & (n= 5k+2) \\
				-2(-1)^k & (n = 5k+3) \\
				0  & (n = 5k+1, 5k+4)
			\end{cases}
			\mod 5 \qquad \text{and} \qquad
		D_n \equiv \begin{cases}
				1 & (\text{$n$ odd}) \\
				0 & (\text{$n$ even})
			\end{cases}
			\mod 2. 
	\end{gather*}
\end{lem}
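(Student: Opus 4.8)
The plan is to reduce the matrix $B=\begin{pmatrix}5&2\\3&1\end{pmatrix}$ modulo $2$ and modulo $5$ and to exploit that its characteristic polynomial $\lambda^2-6\lambda-1$ has discriminant $40=4\cdot 10$, which vanishes both mod $2$ and mod $5$. Consequently, in each of $\mathrm{M}_2(\mathbb{F}_2)$ and $\mathrm{M}_2(\mathbb{F}_5)$ the matrix $B$ has a repeated eigenvalue $\mu$ and is not scalar, so it can be written there as $B\equiv\mu I+N$ with $N$ nilpotent, $N^2\equiv 0$. Writing $T_m=\tr(B_1B^m)$, one has $D_n=T_{n+1}^2-4(-1)^n$, because $B_1B^{n+1}=\begin{pmatrix}p_{3n+3}&p_{3n+2}\\ q_{3n+3}&q_{3n+2}\end{pmatrix}$ has determinant $\det B_1\cdot\det B^{n+1}=(-1)(-1)^{n+1}=(-1)^n$, so $D_n$ is exactly the discriminant of its characteristic polynomial. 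Everything then reduces to computing $T_{n+1}$ modulo $2$ and modulo $5$.

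For the mod $2$ part I would use $B\equiv I+N'\pmod 2$ with $N'=\begin{pmatrix}0&0\\1&0\end{pmatrix}$ and $N'^2=0$, so that $B^{n+1}\equiv I+(n+1)N'\pmod 2$. A one-line matrix multiplication gives $\tr(B_1N')=1$, whence $T_{n+1}\equiv\tr B_1+(n+1)\tr(B_1N')\equiv 1+(n+1)\equiv n\pmod 2$. Since $4(-1)^n\equiv 0\pmod 2$, this yields $D_n\equiv n^2\equiv n\pmod 2$, which is precisely the claimed dichotomy.

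For the mod $5$ part the double root of $\lambda^2-6\lambda-1\equiv(\lambda-3)^2\pmod 5$ is $\mu=3$, so $B\equiv 3I+N\pmod 5$ with $N=B-3I$, and one checks $N^2\equiv 0\pmod 5$; hence $B^{n+1}\equiv 3^{n+1}I+(n+1)3^nN\pmod 5$. Using $\tr B_1=1$ and $\tr(B_1N)\equiv 2\pmod 5$ gives $T_{n+1}\equiv 3^{n+1}+2(n+1)3^n\equiv 2n\cdot 3^n\pmod 5$. Squaring and using $9\equiv -1\pmod 5$ yields $T_{n+1}^2\equiv 4n^2(-1)^n\pmod 5$, so $D_n\equiv 4(-1)^n(n^2-1)\equiv -(-1)^n(n^2-1)\pmod 5$. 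Finally I would substitute $n=5k+r$ with $0\le r\le 4$, use $(-1)^n=(-1)^k(-1)^r$ and evaluate $n^2-1\equiv r^2-1\pmod 5$ in the five cases (the values being $-1,0,3,3,0$ for $r=0,1,2,3,4$); this reproduces exactly the five listed residues $(-1)^k,\,0,\,2(-1)^k,\,-2(-1)^k,\,0$.

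There is no serious obstacle here: the one idea is the repeated-eigenvalue reduction of $B$ mod $2$ and mod $5$, after which the closed forms $T_{n+1}\equiv n\pmod 2$ and $T_{n+1}\equiv 2n\cdot 3^n\pmod 5$, and hence $D_n\equiv n\pmod 2$ and $D_n\equiv -(-1)^n(n^2-1)\pmod 5$, collapse all the case analysis into a single formula. The only point demanding a little care is correctly evaluating the traces $\tr(B_1N')$ and $\tr(B_1N)$ and keeping track of $3^n$ modulo $5$.
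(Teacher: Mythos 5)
Your proof is correct: with $N'=B-I\equiv\begin{pmatrix}0&0\\1&0\end{pmatrix}\pmod 2$ and $N=B-3I\pmod 5$ both square-zero, the traces $\tr(B_1N')\equiv 1\pmod 2$ and $\tr(B_1N)\equiv 2\pmod 5$ are right, and your closed formulas $D_n\equiv n\pmod 2$ and $D_n\equiv-(-1)^n(n^2-1)\pmod 5$ do reproduce exactly the residues listed in the lemma (I checked all five cases $n=5k+r$, and spot-checked $D_1=3725$ and $D_2=141372$). Your route is genuinely different from the paper's in mechanism, though both rest on the identity $D_n=\tr(B_1B^{n+1})^2-4(-1)^n$ stated just before the lemma, which you rederive correctly via $\det(B_1B^{n+1})=\det B_1\cdot(\det B)^{n+1}=(-1)^n$. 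The paper proceeds by brute force: it computes $\overline{B},\overline{B}^2,\dots,\overline{B}^5$ modulo $5$, observes $\overline{B}^5=\overline{3}I$, and uses this periodicity to evaluate $\tr(B_1B^{5k+j})\bmod 5$ case by case in $j$, treating the modulus $2$ "similarly". You instead use that the characteristic polynomial $\lambda^2-6\lambda-1$ of $B$ has discriminant $40$, so $B$ reduces to a scalar plus square-zero nilpotent both mod $2$ and mod $5$, and the expansion $(\mu I+N)^{n+1}\equiv\mu^{n+1}I+(n+1)\mu^{n}N$ gives a single uniform formula for $\tr(B_1B^{n+1})$ in each modulus. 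What your approach buys: the case analysis collapses to one congruence per modulus, and the periodicity the paper merely observes is explained ($\overline{B}^5\equiv 3^5I\equiv 3I$); what the paper's buys: it requires no structural observation at all, only a handful of $2\times 2$ matrix multiplications modulo $5$. Either argument fully proves the lemma.
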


\begin{proof}
Let $\ol B  = B \mod 5$. 
Then we have 
\begin{gather*}
	\ol B = \begin{pmatrix}
				\ol 0 & \ol 2 \\
				\ol 3 & \ol 1
			\end{pmatrix}, \quad
	\ol B^2 = \begin{pmatrix}
				\ol 1 & \ol 2 \\
				\ol 3 & \ol 2
			\end{pmatrix} , \quad
	\ol B^3 = \begin{pmatrix}
				\ol 1 & \ol 4 \\
				\ol 1 & \ol 3
			\end{pmatrix} , \quad 
	\ol B^4 = \begin{pmatrix}
				\ol 2 & \ol 1 \\
				\ol 4 & \ol 0
			\end{pmatrix}, \quad
	\ol B^5 = \begin{pmatrix}
				\ol 3 & \ol 0 \\
				\ol 0 & \ol 3
			\end{pmatrix}. 
\end{gather*}
Therefore, 
\begin{gather*}
	\tr(B_1B^{5k+j}) \mod 5 = \begin{cases}
			\ol 0 & (j=1) \\
			\ol 3^k & (j=0,2,3) \\
			\ol 2 \cdot \ol 3^k & (j = 4)
		\end{cases}. 
\end{gather*}
This gives the congruent condition of $D_n$ modulo $5$. 
The case of modulo $2$ is treated similarly. 
\end{proof}

\begin{lem} \label{lem:3}
	If $n$ is odd or $n \not\equiv \pm 1 \mod 5$, 
	then $e^{(n)}_0(0)$ is not contained in $\mathbb{Q}(\sqrt{10})$. 
\end{lem}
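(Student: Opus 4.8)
The plan is to translate the statement into an elementary divisibility condition on the integer $D_n$ and then read it off from Lemma~\ref{lem:discriminant}. Since $e^{(n)}_0(0) = 1/\theta_n$, the field $\mathbb{Q}(e^{(n)}_0(0))$ equals $\mathbb{Q}(\theta_n)$, so it suffices to decide when $\theta_n \in \mathbb{Q}(\sqrt{10})$. The formula $\theta_n = (p_{3n+3} - q_{3n+2} + \sqrt{D_n})/(2q_{3n+3})$ shows $\sqrt{D_n} \in \mathbb{Q}(\theta_n)$, and trivially $\theta_n \in \mathbb{Q}(\sqrt{D_n})$, so $\mathbb{Q}(\theta_n) = \mathbb{Q}(\sqrt{D_n})$; here $D_n$ is a positive integer and not a perfect square, since $\theta_n$, being a periodic continued fraction, is irrational. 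Hence $e^{(n)}_0(0) \in \mathbb{Q}(\sqrt{10})$ if and only if $\mathbb{Q}(\sqrt{D_n}) = \mathbb{Q}(\sqrt{10})$, and, as $10$ is squarefree and $D_n$ is a positive integer, this occurs exactly when $D_n = 10\,m^2$ for some positive integer $m$.

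Next I would invoke Lemma~\ref{lem:discriminant}. If $D_n = 10\,m^2$, then on the one hand $D_n$ is even, which by the mod~$2$ part of Lemma~\ref{lem:discriminant} forces $n$ to be even; on the other hand $D_n \equiv 0 \mod 5$, which by the mod~$5$ part forces $n \equiv 1$ or $4 \mod 5$, i.e.\ $n \equiv \pm 1 \mod 5$. Contrapositively, if $n$ is odd \emph{or} $n \not\equiv \pm 1 \mod 5$, then $D_n$ cannot have the form $10\,m^2$, whence $e^{(n)}_0(0) \notin \mathbb{Q}(\sqrt{10})$, which is the assertion.

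There is essentially no real obstacle: the arithmetic content has already been extracted in Lemma~\ref{lem:discriminant} through the reductions of the powers of $B$ modulo $2$ and $5$. The only points that need a word of justification are the chain $\mathbb{Q}(e^{(n)}_0(0)) = \mathbb{Q}(\theta_n) = \mathbb{Q}(\sqrt{D_n})$ and the elementary fact that a positive integer whose product with the squarefree number $10$ is a perfect square must itself be $10$ times a square; both are routine. (Alternatively one could note directly that $D_n = (q_{3n+2}+p_{3n+3})^2 - 4(-1)^n$ lies strictly between consecutive squares and so is never a perfect square, but this refinement is not needed for the argument.)
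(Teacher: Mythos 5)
Your proof is correct and follows essentially the same route as the paper: both arguments reduce membership of $e^{(n)}_0(0)$ in $\mathbb{Q}(\sqrt{10})$ to the condition $D_n \equiv 0 \pmod{10}$ (the paper by writing $\sqrt{D_n}=a+b\sqrt{10}$ and forcing $a=0$, $b\in\mathbb{Z}$; you by the equivalent squarefree-kernel criterion $D_n=10m^2$) and then contradict the congruences of Lemma \ref{lem:discriminant}. The only cosmetic difference is your explicit identification $\mathbb{Q}(e^{(n)}_0(0))=\mathbb{Q}(\theta_n)=\mathbb{Q}(\sqrt{D_n})$, which the paper leaves implicit.
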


\begin{proof}
If we suppose $e^{(n)}_0(0) \in \mathbb{Q}(\sqrt{10})$, 
then $\sqrt{D_n}$ must be an element of $\mathbb{Q}(\sqrt{10})$. 
Let $\sqrt{D_n} = a + b\sqrt{10}$ for $a, b \in \mathbb{Q}$. 
From $D_n = a^2 + 10b^2 + 2ab\sqrt{10}$ and the irrationality of $\sqrt{D_n}$, 
it follows  $a = 0$. 
Then $D_n = 10b^2$. 
This implies $b \in \mathbb{Z}$ and $D_n \equiv 0 \mod 10$. 
This contradicts to Lemma \ref{lem:discriminant}. 
\end{proof}

\begin{prop}\label{prop:less}
	Let $n$ be a sufficiently large positive integer and 
	assume that $n$ is odd or $n \not\equiv \pm 1 \mod 5$. 
	Let $x^{(n)}$ be a Sturmian word of slope $[0, a_1, \cdots , a_K, \overline{(2,1,1)^{n+1},1}]$. 
	Then  $\rep(x^{(n)})$ is strictly less than $r_1$. 
\end{prop}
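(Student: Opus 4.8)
The plan is to prove the two facts $\rep(x^{(n)}) \le r_1$ and $\rep(x^{(n)}) \neq r_1$ and then combine them. The first holds for every $n$ and is a one-line consequence of the gap result of Section~\ref{sec:r2}; the second holds for the $n$ in the hypothesis and is a statement about quadratic fields. I would begin by noting that the slope in this proposition is exactly the $\theta^{(n)}$ of Section~\ref{sec:accumulate}, since $(2,1,1)^{n+1},1 = (2,1,1)^{n},2,1,1,1$; hence, keeping the standing assumption of that section that the case $[2]_k$ holds for all $k>K$, the computations there apply, and in particular Steps~1 and~3 of the proof of Theorem~\ref{th:lim-xn} give $\rep(x^{(n)}) = \zeta^{(n)}_0 = \varphi^{(n)}_0(0)$ for all sufficiently large $n$.

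For $\rep(x^{(n)}) \le r_1$: the periodic block $\overline{(2,1,1)^{n+1},1}$ ends with $\dots,2,1,1,1$ (the last full $2,1,1$ followed by the extra $1$), so the factor $a_k a_{k+1} a_{k+2} a_{k+3} = 2111$ occurs in the continued fraction expansion of the slope of $x^{(n)}$ for arbitrarily large $k$. If $\rep(x^{(n)}) > r_1$, then Proposition~\ref{prop:2111r_1} would forbid the pattern $2111$ for all sufficiently large $k$, a contradiction. (Equivalently, the slope is not of the form $[0,a_1,\dots,a_K,\overline{2,1,1}]$, so Theorem~\ref{th:repmax2} excludes both $r_1 < \rep(x^{(n)}) < r_{\max}$ and $\rep(x^{(n)}) = r_{\max}$.) Hence $\rep(x^{(n)}) \le r_1$.

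For $\rep(x^{(n)}) \neq r_1$: I would argue by contradiction. If $\rep(x^{(n)}) = r_1 = (48+\sqrt{10})/31$ for some large admissible $n$, then $\varphi^{(n)}_0(0) = r_1$, and by the representation $\varphi^{(n)}_0(0) = 1 + \bigl(h_n(e^{(n)}_0(0))+1\bigr)^{-1}$ established earlier in this section one solves $1+(y+1)^{-1}=(48+\sqrt{10})/31$ to get $h_n(e^{(n)}_0(0)) = (8-\sqrt{10})/9$, an irrational element of $\mathbb{Q}(\sqrt{10})$. On the other hand, by Lemma~\ref{lem:1} the function $h_n$ is a ratio of polynomials with integer coefficients, so $h_n(e^{(n)}_0(0)) \in \mathbb{Q}\bigl(e^{(n)}_0(0)\bigr)$. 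Since $e^{(n)}_0(0) = [0,\overline{1,(1,1,2)^{n+1}}]$ is a quadratic irrational, $\mathbb{Q}\bigl(e^{(n)}_0(0)\bigr)$ has degree $2$ over $\mathbb{Q}$, and as it contains the irrational number $(8-\sqrt{10})/9$ it must coincide with $\mathbb{Q}(\sqrt{10})$. Then $e^{(n)}_0(0) \in \mathbb{Q}(\sqrt{10})$, contradicting Lemma~\ref{lem:3}. Combining the two facts yields $\rep(x^{(n)}) < r_1$.

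The hard part is really the \emph{strictness}: the asymptotics $\rep(x^{(n)}) \to r_1$ from Theorem~\ref{th:lim-xn} do not say from which side the convergence occurs — the sign of $h_n(e^{(n)}_0(0)) - (8-\sqrt{10})/9$ appears to flip with the parity of $n$ — so a direct estimate is unappealing; the idea is to obtain ``$\le$'' cheaply from the structural gap theorem and then to block equality via an arithmetic obstruction. What makes the stated congruence hypothesis exactly the right one is Lemma~\ref{lem:discriminant}: for odd $n$ the discriminant $D_n$ of $e^{(n)}_0(0)$ is odd, and for $n \not\equiv \pm 1 \pmod 5$ it is prime to $5$, so in either case $10 \nmid D_n$, which is precisely the condition under which Lemma~\ref{lem:3} gives $e^{(n)}_0(0) \notin \mathbb{Q}(\sqrt{10})$.
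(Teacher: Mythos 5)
Your route is the same as the paper's: get $\rep(x^{(n)})\le r_1$ from the Section \ref{sec:r2} results (the paper quotes Theorem \ref{th:repmax2}; your direct appeal to Proposition \ref{prop:2111r_1} via the periodically recurring block $2111$ is equivalent), and rule out equality by showing it would force $e^{(n)}_0(0)\in\mathbb{Q}(\sqrt{10})$, contradicting Lemma \ref{lem:3}. Your finish is in fact a touch cleaner than the paper's: you solve $1+\bigl(h_n(e^{(n)}_0(0))+1\bigr)^{-1}=r_1$ explicitly to get $h_n(e^{(n)}_0(0))=(8-\sqrt{10})/9$ and then compare the degree-two fields $\mathbb{Q}\bigl(e^{(n)}_0(0)\bigr)$ and $\mathbb{Q}(\sqrt{10})$, whereas the paper reduces $h_n(e^{(n)}_0(0))$ to a fractional linear expression in $e^{(n)}_0(0)$ via Lemma \ref{lem:1} and inverts it; both arguments are fine. (Minor slip: excluding $\rep(x^{(n)})=r_{\max}$ uses the slope characterization in Theorem \ref{bugthm:3.3}, not Theorem \ref{th:repmax2} itself; but your main argument through Proposition \ref{prop:2111r_1} already gives $\rep(x^{(n)})\le r_1$ without this.)

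The one genuine gap is your handling of the case $[2]_k$ condition. The proposition is about an \emph{arbitrary} Sturmian word of the given slope, and $\rep$ is not known to depend only on the slope; the identity $\rep(x^{(n)})=\zeta^{(n)}_0=\varphi^{(n)}_0(0)$ from Steps 1 and 3 of Theorem \ref{th:lim-xn} is only available when the case $[2]_k$ of Lemma \ref{buglem:7.2} holds for all large $k$. You import this as a ``standing assumption'' of Section \ref{sec:accumulate}, but it is not among the hypotheses here, so as written your argument proves the statement only for words satisfying that extra condition. The repair is exactly what the paper does: under the contradiction hypothesis $\rep(x^{(n)})=r_1>\varphi$, Lemma \ref{buglem:7.4} shows that the cases $[1]_k$ and $[3]_k$ can occur for at most finitely many $k$ (otherwise $\rep(x^{(n)})\le\varphi$), so by Lemma \ref{buglem:7.2} the case $[2]_k$ holds for all $k>K_0$ for some $K_0\ge K$, and the Section \ref{sec:accumulate} computations apply to the tail. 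With that one step added, your proof coincides with the paper's.
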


\begin{proof}
By Theorem \ref{th:repmax2}, $\rep(x^{(n)})$ satisfies 
\begin{gather}
	\rep(x^{(n)}) \leq r_1 = \frac{48 + \sqrt{10}}{31} . \label{eq:r2}
\end{gather}
Suppose that the equality holds in (\ref{eq:r2}). 
By Lemmas \ref{buglem:7.2} and \ref{buglem:7.4}, there exists $K_0 \ge K$ such that the case $[2]_k$
holds for all  $k > K_0$. 
Then, by Step 3 of the proof of Theorem \ref{th:lim-xn}, we have 
\begin{gather*}
	r_1 = \rep(x^{(n)}) = \varphi^{(n)}_0(0)
		=  1 + \frac{1}{h_n(e^{(n)}_0(0)) + 1}
\end{gather*}
if  $n$ is sufficiently large. 
Then $h_n(e^{(n)}_0(0)) $ must be contained in $\mathbb{Q}(\sqrt{10})$. 
By Lemma \ref{lem:1} and the fact that $e^{(n)}_0(0)$ is a quadratic irrational, 
$ h_n(e^{(n)}_0(0)) $ is represented as 
\begin{gather*}
	h_n(e^{(n)}_0(0))  = \frac{e^{(n)}_0(0)}{1+e^{(n)}_0(0)} \cdot 
			\frac{a_{n+1}' e^{(n)}_0(0) + b_{n+1}'}{(c_{n+1}-1)e^{(n)}_0(0) + d_n} 
		=  \frac{a'' e^{(n)}_0(0) +b''}{c'' e^{(n)}_0(0) + d''}, 
\end{gather*}
where $a'', b'', c'', d'' \in \mathbb{Q}$. 
This implies $e^{(n)}_0(0) \in \mathbb{Q}(\sqrt{10})$ and a contradiction. 
\end{proof}

\begin{proof}[Proof of Theorem \ref{th:accumulation}.]
Bugeaud and Kim constructed an example of a Sturmian word $x$ with $\rep(x) = r_{\max}$. 
We use the same method as in \cite[\S 7]{bugeaud}. 
For every odd number $n \geq 1$, set $\vartheta_n = [0,a_1^{(n)},a_2^{(n)}, \cdots] = [0,\overline{(2,1,1)^{n+1},1}]$. 
Let $\{M_k^{(n)}\}_k$ be the characteristic block defined from $\{a_k^{(n)} \}_k$. 
For  $k \geq 2$, the finite word $W^{(n)}_k = 1M^{(n)}_0M^{(n)}_1 \cdots M^{(n)}_{k-2}$ is a suffix of $M^{(n)}_k$. 
Define 
\begin{gather*}
	s^{(n)} = \lim_{k \to \infty} W^{(n)}_k = \lim_{k \to \infty}(1M^{(n)}_0M^{(n)}_1 \cdots M^{(n)}_{k-2}) . 
\end{gather*}
The slope of $s^{(n)}$ equals $\vartheta_n$ and 
$s^{(n)} = W^{(n)}_kM^{(n)}_{k-1}M^{(n)}_k\widetilde M^{(n)}_k \cdots$ holds for all  $k \geq 2$. 
By Proposition \ref{prop:less}, $\rep(s^{(n)})$ is strictly less than $r_1$ for sufficiently large $n$. 
By Theorem \ref{th:lim-xn}, $\rep(s^{(n)})$ converges to $r_1$ as $n \to \infty$. 
\end{proof}

\vskip 10mm

\end{document}